%23 dic. 2020, 25 dic., 3 Jan. 2021, arXiv, 20 april. arXiv v2
\documentclass[reqno]{amsart}
\usepackage{cmbright}
\usepackage[hidelinks]{hyperref}
\usepackage{enumitem}
\usepackage{tikz,doi}
\usepackage[utf8]{inputenc}
\usepackage{amsmath}
\usepackage{amssymb} % serve
\usepackage[numbers]{natbib}
\usepackage{prettyref}

\theoremstyle{plain}
% definisco un ambiente thmC che prende come argomento un nome, lo assegna alla variabile pippo, chiama l'ambiente thm@P, che scrive Theorem "valore di pippo". Quando chiamo \begin{thm}{C}{nome}, ottengo Theorem nome.em
\makeatletter
\newtheorem*{thm@P}{Theorem \pipp@}

\makeatother

\newenvironment{pf}{\begin{proof}}{\end{proof}}

\numberwithin{equation}{section}

\newtheorem{thm}{Theorem}[section]
\newtheorem*{thm*}{Theorem}
\newtheorem*{cor*}{Corollary}
\newtheorem*{defn*}{Definition}
\newrefformat{thm}{Theorem~\ref{#1}} %SK
\makeatletter %SK
\let\old@newtheorem\newtheorem % SK
\renewcommand{\newtheorem}[2]{\old@newtheorem{#1}[thm]{#2} % SK
	\newrefformat{#1}{#2~\ref{##1}}} %SK
\makeatother %SK

\newtheorem{prop}{Proposition}
\newtheorem{cor}{Corollary}

\newtheorem{lem}{Lemma}

\newtheorem{fact}{Fact}

\theoremstyle{definition}
\newtheorem{rem}{Remark}
\newtheorem{observation}{Observation}

\newtheorem{defn}{Definition}

\newtheorem{exa}{Example}
\newtheorem{example}{Example}

\newtheorem{notation}{Notation}

\newcommand{\Q}{\mathbb Q}
\newcommand{\R}{\mathbb R}
\newcommand{\N}{\mathbb N}

\newcommand{\Z}{\mathbb Z}

 %Godel number

\newcommand{\e}{1_G}

\newcommand{\omin}[1]{{[#1]}^{\text{o-min}}}

\DeclareMathOperator{\dom}{dom}
\DeclareMathOperator{\Img}{Im}

%%%%%%%%%%%%%% Added by Pantelis

\usepackage[textsize=tiny]{todonotes}

\newcommand{\sub}{\subseteq}

\newcommand{\Rarr}{\ensuremath{\Rightarrow}}

\newcommand{\CM}{{\mathcal M}}

\newcommand{\CY}{{\mathcal Y}}

\newcommand{\cal}[1]{\ensuremath{\mathcal{#1}}}

\title{Orthogonal decomposition of definable groups}
\author{Alessandro Berarducci}
\address{Universit\`a di Pisa,
Dipartimento di Matematica,
Largo Bruno Pontecorvo, 5,
56127 Pisa, Italy}
\email{alessandro.berarducci@unipi.it}
\author{Pantelis E.\ Eleftheriou}
\address{School of Mathematics, University of Leeds, Leeds LS2 9JT, United Kingdom}%; AND\newline \indent Department of Mathematics, University of Pisa, Largo Bruno Pontecorvo, 5, 56127 Pisa, Italy.}
\email{p.eleftheriou@leeds.ac.uk}
\author{Marcello Mamino}
\address{Universit\`a di Pisa,
Dipartimento di Matematica,
Largo Bruno Pontecorvo, 5,
56127 Pisa, Italy}
\email{marcello.mamino@unipi.it}
\date{January 3, 2021}
\thanks{Berarducci and Mamino have been partially supported by the Italian research
project PRIN~2017, ``Mathematical logic: models, sets, computability'',
Prot.~2017NWTM8RPRIN. Eleftheriou has been partially supported by 2017NWTM8RPRIN, a Zukunftskolleg Research Fellowship (Konstanz) and an EPSRC Early Career Fellowship (EP/V003291/1) (Leeds).
}
\subjclass[2010]{03C64}
\keywords{Model theory, definable groups, o-minimality, NIP theories}

\begin{document}
\begin{abstract} Orthogonality in model theory captures the idea of absence of non-trivial interactions between definable sets. We introduce a somewhat opposite notion of cohesiveness, capturing the idea of interaction among all parts of a given definable set.
A cohesive set is indecomposable, in the sense that if it is internal to the product of two orthogonal sets, then it is internal to one of the two.
We prove that a definable group in an o-minimal structure is a product of cohesive orthogonal subsets.
 If the group has dimension one, or it is definably simple, then it is itself cohesive. As an application, we show that an abelian group definable in the disjoint union of finitely many o-minimal structures is a quotient, by a discrete normal subgroup, of a direct product of locally definable groups in the single structures.
\end{abstract}

\maketitle
\tableofcontents

\section{Introduction}
Considering a group $G$ interpretable in the disjoint union of finitely
many structures ${\mathcal X}_1,\ldots, {\mathcal X}_n$ (seen as a multi-sorted structure as in Definition \ref{defn:disjoint}), one may ask whether $G$ can be understood in
terms of groups definable in the individual structures. One may, for instance, ask whether $G$ is definably isomorphic to a quotient, modulo a finite normal subgroup $\Gamma$, of a direct product $G_1\times \ldots \times G_n$, where $G_i$ is a  group definable in ${\mathcal X}_i$. This, however, is not true in general: a counterexample is provided by \cite[Example 1.2]{Berarducci2013c} (a torus obtained from two orthogonal copies of $\R$ and a lattice generated by two vectors in generic position). After a talk by the first author at the Oberwolfach workshop ``Model Theory: Groups, Geometry, and Combinatorics'' (2013), Hrushovski suggested that a result of the above kind would require to pass to the locally definable category. So the natural conjecture would be that, if $G$ is as above, there is a locally definable isomorphism $G \cong G_1\times \ldots \times G_n/\Gamma$, where $G_i$ is a locally definable group in $\mathcal{X}_i$ and $\Gamma$ is a compatible {\it discrete} subgroup (i.e.\ a subgroup which intersects every definable set at a finite set). Here we establish the conjecture under the additional assumption that the structures ${\mathcal X}_i$ are o-minimal and $G$ is abelian. More precisely, we prove the following result.

\begin{thm*}[\ref{thm:lattice2}]
	Let $G$ be a definable abelian group in the disjoint union of finitely many o-minimal structures ${\mathcal X}_1, \ldots, {\mathcal X}_n$. Then there is a locally definable homomorphism $$G \cong G_1\times \ldots \times G_n /\Gamma,$$ where $G_i$ is a locally definable group in ${\mathcal X}_i$ and $\Gamma$ is a compatible locally definable discrete subgroup of $G_1\times \ldots \times G_n$.
\end{thm*}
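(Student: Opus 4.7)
The overall plan is to apply the orthogonal decomposition theorem announced in the abstract, exploiting the fact that in the disjoint union of finitely many structures the sorts $\mathcal{X}_1, \ldots, \mathcal{X}_n$ are pairwise orthogonal by construction. The indecomposability packaged into ``cohesive'' then forces each cohesive factor of $G$ to live over a single sort, and the problem becomes one of transferring an internal group structure into the single sort and controlling the overlap.

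First, applied to the abelian group $G$ (interpreted in the multi-sorted structure), the decomposition theorem yields a representation
\[ G = X_1 + \cdots + X_k \]
with each $X_j$ cohesive and $X_j \perp X_{j'}$ for $j \neq j'$. Since the sorts $\mathcal{X}_i$ are pairwise orthogonal and each $X_j$ is indecomposable with respect to orthogonal products, each $X_j$ is internal to a unique sort $\mathcal{X}_{i(j)}$. Grouping by sort, set $A_i := \sum_{i(j)=i} \langle X_j \rangle \leq G$. Then $A_i$ is internal to $\mathcal{X}_i$ and $G = A_1 + \cdots + A_n$.

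Next, I would upgrade each $A_i$, which is only definable in the multi-sorted structure, to a locally definable group $G_i$ in $\mathcal{X}_i$. Internality gives a definable surjection from some $\mathcal{X}_i$-definable set onto $A_i$; by orthogonality, any relation on such a set that is definable in the full multi-sorted structure is in fact definable within $\mathcal{X}_i$, so the group operation on $A_i$ pulls back to one on $\mathcal{X}_i$. The subtlety is that this pull-back is a priori only a partial operation and becomes a group only after passing to a locally definable cover $G_i$ in $\mathcal{X}_i$: the generators $X_j$ are definable, but their iterated sums may escape any single definable set, so the resulting $G_i$ must be locally definable. This step produces locally definable surjections $\pi_i : G_i \to A_i$, which assemble to
\[ \phi : G_1 \times \cdots \times G_n \longrightarrow G, \qquad (g_1,\ldots,g_n) \mapsto \pi_1(g_1) + \cdots + \pi_n(g_n). \]

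The main obstacle will be verifying that $\Gamma := \ker\phi$ is a compatible locally definable \emph{discrete} subgroup. A tuple $(g_1,\ldots,g_n) \in \Gamma$ forces $\pi_i(g_i) \in A_i \cap \sum_{j \neq i} A_j$ for each $i$. Because the sorts are orthogonal, any element of $A_i$ which is also expressible from the other sorts must come from the algebraic closure; thus the relevant intersections should be finite on each definable piece. Transferring this finiteness through the covers $\pi_i$ and packaging the whole thing as a locally definable subgroup of $G_1 \times \cdots \times G_n$ is where most of the work (and the need for careful bookkeeping with the locally definable category, as in Hrushovski's original suggestion) is going to lie.
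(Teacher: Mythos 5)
Your overall route coincides with the paper's: decompose $G$ into a product of definable pieces each internal to a single sort (the paper applies Theorem~\ref{thm:ominimal} directly to the domains $X_1,\dots,X_n$ rather than passing through Theorem~\ref{thm:main2} and regrouping by sort, but that difference is cosmetic), form the locally definable subgroups $\langle A_i\rangle$ generated by the pieces, transfer each into its own structure $\mathcal X_i$, and show that the kernel of the resulting product map is discrete. There is, however, a genuine gap in the transfer step, and it propagates into the discreteness claim. You build $G_i$ as a locally definable \emph{cover} of $A_i$, i.e.\ a surjection $\pi_i:G_i\to A_i$ obtained by pulling back the group operation along an internality surjection from a power of $X_i$. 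Pulling back a group law along a non-injective surjection does not produce a group law (one must quotient, which lands in the interpretable category); and if you instead retain a genuine cover whose fibres are infinite on some definable piece, then the product of the kernels of the $\pi_i$ sits inside $\Gamma$, and $\Gamma$ is \emph{not} discrete. So the construction must produce a locally definable \emph{isomorphism} of $\langle A_i\rangle$ with a group in $\mathcal X_i$, and your sketch supplies no mechanism for that. The paper's mechanism is concrete: $G$ sits literally inside $X_1^{k_1}\times\cdots\times X_n^{k_n}$; each set $A_i^{(m)}$ of $m$-fold products of generators is $X_i$-internal, so by Proposition~\ref{prop:finite-proj} its projection onto the other sorts is finite; hence $\langle A_i\rangle\subseteq L\times X_i^{k_i}\times F$ with $L$ and $F$ countable, and a bijection of $L\cup F$ with a countable subset of $X_i$ gives a locally definable bijection of $\langle A_i\rangle$ with a locally definable subset of $X_i^{k_i+1}$, onto which the group law transports.

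Your discreteness argument also needs replacing: ``must come from the algebraic closure'' is a heuristic, not a proof, and in any case finiteness of the images $\pi_i(g_i)$ does not bound the tuples $(g_1,\dots,g_n)$ unless the $\pi_i$ are injective. The paper's argument is short once injectivity is in place: fix definable $U_i\subseteq\langle A_i\rangle$; the $U_i$ are pairwise orthogonal by Proposition~\ref{prop:intort}, so the set of tuples $(a_1,\dots,a_n)\in U_1\times\cdots\times U_n$ with $a_1\cdots a_n=1_G$ is a finite union of boxes $B_1\times\cdots\times B_n$, and each $B_i$ must be a singleton because any $n-1$ of the coordinates determine the remaining one via the equation. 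Repair the construction of the $G_i$ as above and this final step goes through verbatim.
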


We will deduce Theorem \ref{thm:lattice2} from Theorem \ref{thm:ominimal} below, which is interesting in itself and holds for non-abelian groups as well. To state the theorem we need to recall the model-theoretic notion of orthogonality. Given definable sets $X_1, \ldots, X_n$ in a structure $\CM$, we say that $X_1,\ldots, X_n$ are {\it orthogonal} if, for all $k_1, \ldots,  k_n \in \N$, any definable subset of $X^{k_1} \times  \ldots \times  X^{k_n}$ is a finite union of sets of the form $A_1\times \ldots \times A_n$, where $A_i$ is a definable subset of $X_i^{k_i}$. Let us also recall that a definable set $X$ is \textit{internal} to a definable set $Y$ if there is $m\in \N$ and a definable surjective map from $Y^m$ to $X$.

\begin{thm*}[\ref{thm:ominimal}] Assume $\CM$ is an o-minimal structure. Let $X_1,\dots,X_n$ be orthogonal sets definable in $\CM$ and  $G$  a group definable in $\CM$. If $G$ is internal to the product $X_1\times \dots \times X_n$, then $G$ is a product $$G=A_1 \dots A_n$$ of definable subsets $A_1,\dots,A_n$, where $A_i$ is internal to $X_i$, for  $i=1,\dots,n$.
\end{thm*}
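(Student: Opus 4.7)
The plan is to invoke the paper's orthogonal decomposition theorem for definable groups (announced in the abstract), which writes $G$ as a product of pairwise orthogonal cohesive definable subsets, and then use the indecomposability property of cohesive sets to pin each piece to a single $X_i$.

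Concretely, I would start by applying that decomposition theorem to obtain $G = C_1 C_2 \cdots C_k$ with $C_1, \ldots, C_k$ definable, pairwise orthogonal, and cohesive. Because $G$ is internal to $X_1 \times \cdots \times X_n$, so is each $C_j \subseteq G$. The pairwise orthogonality of $X_1, \ldots, X_n$ implies that $X_i$ and the product $\prod_{\ell \neq i} X_\ell$ are orthogonal for every $i$ (by grouping the factors in the rectangle decomposition guaranteed by orthogonality), so the indecomposability property of cohesive sets, applied iteratively, yields an assignment $j \mapsto i(j) \in \{1, \ldots, n\}$ such that $C_j$ is internal to $X_{i(j)}$. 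Setting $A_i = \prod_{j : i(j) = i} C_j$, each $A_i$ is again internal to $X_i$: composing the definable surjections $X_i^{m_j} \twoheadrightarrow C_j$ with the group multiplication of $G$ produces a definable surjection from a power of $X_i$ onto $A_i$. Rearranging the order of the factors in $G = C_1 \cdots C_k$ to group those with the same $i(j)$ together then gives $G = A_1 A_2 \cdots A_n$, as required.

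The main obstacle is this last rearrangement. For abelian $G$ the reordering of the $C_j$'s inside the product is automatic, but in the non-abelian case one needs some form of commutation (or at least a flexibility-of-order statement) among pairwise orthogonal cohesive subsets of $G$. I would expect such a property either to be built into the orthogonal decomposition theorem (for instance by choosing the decomposition so that factors with the same $i(j)$ are adjacent, or by establishing that orthogonal cohesive subsets centralize each other) or to follow from a short separate lemma about products of orthogonal internal subsets of a definable group. Once that ingredient is available, the present theorem reduces to the combination of the orthogonal decomposition of $G$ with the indecomposability of cohesive sets against products of orthogonal sets, and the remainder is bookkeeping.
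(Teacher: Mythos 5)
Your proposal is circular. The ``orthogonal decomposition theorem'' you want to invoke is Theorem \ref{thm:main2}, and in the paper that theorem is \emph{deduced from} Theorem \ref{thm:ominimal}, not the other way around: its proof consists of producing cohesive orthogonal sets $X_1,\dots,X_n$ via Lemma \ref{lem:orthdec} (an injective definable map from $G$ into a product of $1$-dimensional groups, using \cite{Ramakrishnan2014}) and then applying Theorem \ref{thm:ominimal} to get the $X_i$-internal factors $A_i$. The hard content of both theorems is exactly the passage from ``$G$ is internal to a product of orthogonal sets'' to ``$G$ is a product of subsets each internal to one factor,'' and your plan assumes that passage has already been made. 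There is no independent route to the decomposition $G = C_1\cdots C_k$ in the paper, and nothing in the abstract or in the cohesiveness machinery of Sections \ref{sec:orth}--\ref{sec:splitting} supplies one. The steps you do carry out (each $C_j$, being cohesive and internal to $X_1\times\cdots\times X_n$, is internal to a single $X_{i(j)}$ by indecomposability and Corollary \ref{cor:both}) are correct but are the easy direction.

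Even setting the circularity aside, the rearrangement issue you flag at the end is a genuine unresolved gap, not bookkeeping: in a non-abelian $G$ there is no a priori reason that the factors $C_j$ in a product decomposition $G=C_1\cdots C_k$ can be permuted, and no lemma in the paper asserts that orthogonal cohesive subsets of $G$ centralize each other (the Heisenberg example in the appendix should make you cautious about such commutation claims). The paper's actual proof works by induction on $\dim(G)$ after reducing to the connected case: if $Z(G)$ is finite, $G/Z(G)$ is centerless and by \cite[Theorems 3.1 and 3.2]{Peterzil2000} is a product of linear groups over definable real closed fields, each $1$-dimensional hence cohesive (Theorem \ref{thm:dim1cohesive}) hence internal to a single $X_j$; if $Z(G)$ is infinite one uses the abelian case (Proposition \ref{prop:abelian}, which rests on the compact-domination argument of Theorem \ref{thm:NIP} and on $1$-dimensional torsion-free subgroups) together with the induction hypothesis for $G/Z(G)$; in both cases Lemma \ref{lem:central} lifts decompositions through central extensions. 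None of that machinery appears in, or is replaceable by, your outline.
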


To deduce Theorem \ref{thm:lattice2} from Theorem \ref{thm:ominimal}, we take for $\CM$ the o-minimal structure consisting of the concatenation of the structures ${\mathcal X}_i$ separated by single points. The structures ${\mathcal X}_i$ would then be orthogonal within $\CM$, so we can apply Theorem \ref{thm:ominimal} and take as $G_i$ an isomorphic copy of the subgroup of $G$ generated by $A_i$. The only delicate point is to show that $G_i$ is locally definable in ${\mathcal X}_i$, but this is not difficult.

It is worth stressing that Theorem \ref{thm:ominimal} holds for an arbitrary o-minimal structure $\CM$ and in particular we do not assume that $\CM$ has a group structure. This is important for the application to Theorem \ref{thm:lattice2} because the concatenation of o-minimal structures does not have a group structure even if the single structures do.

The proof of Theorem \ref{thm:ominimal} uses a number of deep results about groups definable in o-minimal structures, such as the solution of Pillay's Conjecture and Compact Domination Conjecture (see \cite{Pillay2004a} and \cite{Hrushovski2007a} for the definitions).  It is, however, conceivable that the theorem could be extended far beyond the o-minimal context: indeed we have no counterexample even if $\CM$ is allowed to be a completely arbitrary structure. Theorem \ref{thm:NIP} provides a partial result in this direction, when $\CM$ is NIP and $G$ is compactly dominated and abelian.

Before stating our final result, we
notice that the sets $A_1,\ldots, A_n$ in Theorem \ref{thm:ominimal} are orthogonal, so we may ask whether, for a group $G$ definable in an arbitrary o-minimal structure $\CM$, there is always a natural way to generate it as a product $G=A_1\ldots A_n$ of orthogonal definable subsets $A_i$. Of course, there is always the trivial solution with $n=1$ and $A_1 = G$, but we would like each $A_i$ to be in some sense ``minimal". To this aim, we introduce the following model-theoretic notions. Let Z be a set definable in an arbitrary structure $\CM$. We say that:
	\begin{itemize}
			\item  $Z$ is \textbf{indecomposable} if whenever $Z$ is internal to the cartesian product of two orthogonal sets, then $Z$ is internal to one of the two.
		\item  $Z$ is {\bf cohesive} if whenever two definable sets are non-orthogonal to $Z$, they are non-orthogonal to each other.
		\end{itemize}
A cohesive set $Z$ is  indecomposable (Proposition \ref{prop:int-cohesive}(2)), but it can be shown that the converse fails (we thank Rosario Mennuni for this observation). We show that cohesive sets have nice model-theoretic properties which we are not able to prove for indecomposable sets. In particular, a set internal to a cohesive set is cohesive and the cartesian product of two cohesive sets is cohesive. The intuition is that the various parts of a cohesive set interact in a non-trivial way, so in particular a cohesive set cannot contain two  orthogonal infinite definable subsets. Our final result is the following.

\begin{thm*}[\ref{thm:main2}]
	Let $G$ be a  group interpretable in an o-minimal structure $\CM$. Then there are cohesive orthogonal definable sets $A_1,\ldots, A_n$, such that $G=A_1\ldots A_n$.
\end{thm*}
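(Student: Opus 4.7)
The plan is to reduce to \prettyref{thm:ominimal}. It suffices to produce orthogonal cohesive definable sets $X_1,\ldots,X_n$ in $\CM$ such that $G$ is internal to the product $X_1\times\cdots\times X_n$: then \prettyref{thm:ominimal} yields a factorization $G=A_1\cdots A_n$ with each $A_i$ internal to $X_i$, the $A_i$'s pairwise orthogonal (as observed in the text immediately preceding the definition of cohesive), and each $A_i$ automatically cohesive because a set internal to a cohesive set is cohesive (one of the closure properties cited right after that definition). This delivers the required decomposition.

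The problem therefore reduces to finding such orthogonal cohesive ``parameters''. Since $G$ is interpretable in $\CM$, it is a definable quotient of a definable subset of $M^k$ for some $k$, and so $G$ is internal to $M$ itself (precompose the quotient map with a definable surjection $M^k\to M^k$ that collapses the complement of the relevant subset onto a point). Hence it suffices to decompose $M$ into finitely many pairwise orthogonal cohesive definable subsets $X_1,\ldots,X_n$: internality of $G$ to $M$ then carries over to internality to $X_1\cup\cdots\cup X_n$, and thence to $X_1\times\cdots\times X_n$.

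I would construct the decomposition of $M$ by a maximality argument. Take a family $\{X_1,\ldots,X_n\}$ of pairwise orthogonal infinite cohesive definable subsets of $M$ that is maximal with respect to inclusion. Finiteness of such a family is forced by o-minimal dimension theory, since pairwise orthogonal definable sets contribute independently to the dimension of their product and so the family size is bounded by $\dim(M^{\mathrm{eq}})$. Maximality is then used to show that every infinite definable subset $Y$ of $M$ is internal to $X_1\cup\cdots\cup X_n$: otherwise one would extract from $Y$ a cohesive infinite definable subset $Y'$ orthogonal to every $X_i$, contradicting maximality. Applied to $Y=M$, this gives $G$ internal to $X_1\times\cdots\times X_n$, as required.

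The main obstacle is the extraction step: showing that any infinite definable subset of $M$ which is orthogonal to all the $X_i$ contains a cohesive infinite definable piece. This is where the full strength of o-minimality enters --- dimension theory, cell decomposition, and in particular the Peterzil--Starchenko trichotomy classifying $1$-dimensional non-orthogonality classes --- and it is precisely the step that one should not expect to generalise beyond the o-minimal setting.
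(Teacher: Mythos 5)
Your first paragraph matches the paper's strategy exactly: produce cohesive, pairwise orthogonal definable sets $X_1,\ldots,X_n$ with $G$ internal to their product, then invoke Theorem~\ref{thm:ominimal} and the closure properties of cohesiveness. The gap is in how you propose to produce the $X_i$. You try to decompose the ambient line $M$ into finitely many orthogonal cohesive pieces, and your key ``extraction step'' --- every infinite definable $Y\subseteq M$ orthogonal to the current family contains an infinite cohesive definable subset --- is false in general. In a trivial o-minimal structure such as a pure dense linear order (or on a trivial interval supplied by the trichotomy theorem), two disjoint subintervals $I_1,I_2$ of an interval $I$ are orthogonal to each other (after quantifier elimination every definable subset of $I_1^k\times I_2^l$ is a finite union of boxes, since the order between the two blocks is constant), yet each is non-orthogonal to $I$ (the fibers of $\{(x,y)\in I_1\times I: x<y\}$ are infinitely many distinct sets). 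Hence $I$ is not cohesive, and neither is any infinite definable subset of it, so $M$ admits no decomposition into cohesive pieces. Concatenating such a structure with, say, an o-minimal expansion of $(\R,<,+)$ gives a structure with infinite definable groups where your intermediate claim fails but the theorem still holds. Cohesiveness in this paper is only ever established for one-dimensional definable \emph{groups} (Theorem~\ref{thm:dim1cohesive}), and that proof uses the group structure (translations, finiteness of sets with a given boundary) in an essential way.

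The paper circumvents all of this by applying \cite[Theorem 3]{Ramakrishnan2014}: there is a definable injection $f:G\to\prod_{j=1}^k G_j$ with each $G_j$ a one-dimensional definable group. Each $G_j$ is cohesive by Theorem~\ref{thm:dim1cohesive}, so non-orthogonality is an equivalence relation on $\{G_1,\ldots,G_k\}$; taking $X_i$ to be the product of the groups in the $i$-th class gives cohesive (Proposition~\ref{prop:prod-cohesive}) mutually orthogonal (Corollary~\ref{cor:both}) sets, with finiteness of the family automatic since there are only $k$ factors --- note that your proposed bound via ``$\dim(M^{\mathrm{eq}})$'' is not a valid argument, as a product of $n$ infinite subsets of $M$ has dimension $n$ for arbitrary $n$. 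This step also repairs a second defect of your outline: $G$ is only interpretable, whereas Theorem~\ref{thm:ominimal} is stated for definable groups, so one needs the Eleftheriou--Peterzil--Ramakrishnan theorem in any case to replace $G$ by a definably isomorphic definable group before the decomposition machinery applies.
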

In the setting of Theorem \ref{thm:main2} we call the tuple $A_1,\ldots, A_n$ an {\bf orthogonal decomposition} of $G$, while in Theorem \ref{thm:ominimal} the tuple $A_1,\ldots, A_n$ is called a {\bf decomposition} of $G$ with respect to $X_1,\ldots, X_n$. One can loosely make the following analogy: a decomposition is like a factorization, while an orthogonal decomposition is like a factorization into primes.

If $G$ is infinite, we can choose each $A_i$ in Theorem \ref{thm:main2} to be infinite, and in this case the number $n$ is an invariant of $G$ up to definable isomorphism.  Indeed, if $G=B_1\ldots B_m$ is another decomposition of $G$ as a product of orthogonal cohesive infinite definable subsets, then $m=n$ and each $B_i$ is bi-internal to a single $A_j$. We may call the invariant $n$ the {\bf dimensionality} of $G$ (not to be confused with the dimension of $G$). In this terminology, the unidimensional groups in the sense of \cite[Claim 1.26]{Peterzil2000} have dimensionality $1$.

We can show that if $G$ has dimension one, or it is definably simple, then $G$ is itself cohesive, so these groups have dimensionality one.

For the proof of Theorem \ref{thm:main2}, we  need both Theorem \ref{thm:ominimal} and the results from \cite{Ramakrishnan2014}. In particular, we need that for every group $G$ interpretable in an o-minimal structure, there is an injective definable map $f$ (not necessarily a morphism) from $G$ to the cartesian product of finitely many $1$-dimensional definable groups (\cite[Theorem 3]{Ramakrishnan2014}).

%\smallskip\noindent\textbf{Related work.}
\subsection*{Related work}
Groups definable in the disjoint union of orthogonal structures have already been considered in \cite{Berarducci2013c,Wagner2016}. The original motivation comes from the model theory of group extensions \cite{Berarducci2010c,Zilber2006}. For instance, the universal cover of a group definable in an o-minimal expansion of the field $\R$ is definable in the disjoint union $\R \sqcup \Z$, where $\Z$ has only the additive structure \cite{Hrushovski2011}. From a model-theoretic point of view, $(\Z,+)$ is an example of a superstable structure of finite and definable Lascar rank. In \cite{Berarducci2013c}, it is shown that if a group $G$ is definable in the disjoint union of an arbitrary structure $\cal R$ and a superstable structure $\cal Z$ of finite and definable Lascar rank, then $G$ is  an extension of a group internal to $\cal R$ by a group internal to $\cal Z$.  In \cite{Wagner2016},  Wagner weakened the superstability assumption to assuming only that  $\cal Z$ is simple. The simplicity assumption cannot be entirely removed, or replaced by an o-minimality one: quotients by a lattice of a  product of copies of $\R$ provide examples of definable groups which may not have infinite definable subgroups internal to any of the copies (\cite[Example 1.2]{Berarducci2013c}). However, as Theorem \ref{thm:ominimal} shows, the o-minimality assumption allows for another, in fact more symmetric analysis in terms of generating subsets instead of subgroups. Theorem \ref{thm:main2} can be seen as a continuation of the work in \cite{Ramakrishnan2014}.

The possibility of extending the results beyond the o-minimal context
raises a number of questions, which are included in Section~\ref{sec:questions}.

%\smallskip\noindent\textbf{Structure of the paper.}
\subsection*{Structure of the paper.}
In Sections \ref{sec:orth} -- \ref{sec:splitting}, we introduce and study
the key notions of this paper. In particular we prove, for definable sets $X_1,\ldots, X_n$ in an arbitrary structure, that $X_1,\ldots, X_n$ are orthogonal if and only if they are pairwise orthogonal. We then study indecomposable and cohesive sets and
establish that $1$-dimensional groups definable in o-minimal structures
are cohesive (Theorem \ref{thm:dim1cohesive}). The proofs of Theorems
\ref{thm:ominimal} and \ref{thm:main2}  then proceed in several steps. In
Section \ref{sec:cdom}, we recall the basics of compact domination for NIP
structures, which we employ in Section \ref{sec:NIP} to prove Theorem
\ref{thm:ominimal} for compactly dominated abelian NIP groups that are
\emph{contained} in $X_1\times \dots \times X_n$ (Theorem \ref{thm:NIP}). In  Section
\ref{sec:omin1}, we specialize in o-minimal structures  and prove Theorem
\ref{thm:ominimal} for definably compact abelian groups \emph{internal} to
$X_1\times \dots \times X_n$. In Section \ref{sec:omin2}, we employ the
rich machinery available for groups definable in o-minimal structures, and
conclude the full Theorem \ref{thm:ominimal}. Together with the results
from \cite{Ramakrishnan2014}, we then establish Theorem \ref{thm:main2}.
In the final part of the paper we prove Theorem \ref{thm:lattice2}.

%\smallskip\noindent\textbf{Notation.}
\subsection*{Notation}
Throughout this paper, we work in a first-order structure $\cal M$. By ``definable'' we mean definable in $\CM$, with parameters. Unless stated otherwise, $X, Y, Z$ denote definable sets. By convention, $X\times Y^{0}=X$.

 We assume familiarity with the basics of o-minimality,
as in~\cite{vdDries1998}. We also assume familiarity with the definable
manifold topology of definable groups~\cite[Proposition 2.5]{Pillay1988}.
All topological notions for
definable groups, such as connectedness and definable compactness, are taken
with respect to this group topology.

\section{Orthogonality}\label{sec:orth}

In this section we work in an arbitrary structure \cal M. We recall the notions of orthogonality and internality, and prove some basic facts.

\begin{defn}
Given definable sets $X_1,\dotsc,X_n$, a
$(X_1,\dotsc,X_n)${\bf-box} is
a definable set of the form~$U_1\times\dotsb\times U_n$ where
$U_i\subseteq X_i$ for every
$i=1,\dotsc,n$.
When clear from the
context, we omit the prefix $(X_1,\dotsc,X_n)$- in front of ``box''.
\end{defn}

\begin{defn}\label{def:orth} Let $X_1,\ldots, X_n$ be definable sets. We say that $X_1,\ldots, X_n$ are  {\bf orthogonal} if, for every $k_1,\ldots, k_n \in \N$, every definable subset $S$ of $X_1^{k_1}\times \ldots \times X_n^{k_n}$ is the
union of finitely many $(X_1^{k_1},\ldots, X_n^{k_n})$-boxes.
\end{defn}

An example of the notion of orthogonal sets is provided by the following definition. 

\begin{defn}\label{defn:disjoint}
	Given finitely many structures ${\mathcal X}_1, \ldots, {\mathcal X}_n$, their {\bf disjoint union} $\bigsqcup_i {\mathcal X}_i$ is the multi-sorted structure with a sort for each ${\mathcal X}_i$ and whose basic relations are the definable sets in the single structures ${\mathcal X}_i$.
Notice that if $X_i$ is the domain of ${\mathcal X}_i$, then $X_1,\ldots, X_n$ are orthogonal in $\bigsqcup_i {\mathcal X}_i$.
\end{defn}

The definition of orthogonality can be rephrased in terms of types using
the following remark of Wagner. We include a proof to facilitate comparison with similar notions of orthogonality in the model-theoretic literature (see \cite{Tent2012}),
but we will not need this fact.

\begin{rem}[{\cite[Remark 3.4]{Wagner2016}}]\label{rem:orthogonal}
	Let $X_1, \dots, X_n$ be definable sets. The following conditions are equivalent:
	\begin{enumerate}
		\item Every definable subset of $X_1 \times \dots \times
		X_n$ is a finite union of $(X_1,\dotsc,X_n)$-boxes.
		\item For every type $p(x_1,\dots, x_n)$ over $\CM$ with $p(x_1,\dots, x_n) \vdash  X_1\times \dots \times X_n$, we have that $p_1(x_1)\cup \dots \cup p_n(x_n) \vdash p(x_1,\dots, x_n)$ where $p_i$ is the $i$-th projection of $p$.
	\end{enumerate}
\end{rem}

\begin{pf} To simplify notation we assume $n=2$ and write $X,Y$ for $X_1,X_2$.
	
	Assume (1). Let $p(x,y)$ be a type over $\CM$ concentrated on
	$X\times Y$.  Let $\varphi(x,y)$ be a formula over $\CM$
	defining a subset of $X\times Y$. The set $Z$ defined by
	$\varphi$ is a finite union of boxes~$U_i\times V_i$. It follows that $p_1(x) \cup p_2(y) \vdash p(x,y)$.
	
	Assume (2).
Let $Z\subseteq X \times Y$ be definable. We must
	show that $Z$ is a finite union of boxes~$U_i \times V_i$. For any type $p(x,y)$ containing the defining formula of $Z$,  we have that $p_1(x) \cup  p_2(y) \vdash  (x, y) \in Z$.
	By compactness, there are $\varphi^p_1(x)\in p_1(x)$ and $\varphi^p_2(y) \in p_2(y)$, such that $\varphi^p_1(x) \land \varphi^p_2(y) \vdash (x,y)\in Z$.
	%Then $ (x,y) \in Z \land \bigwedge_p \lnot (\varphi^p_1(x) \land \varphi^p_2(y))$ is inconsistent.
	Again by compactness, $(x,y)\in Z$ is equivalent to a finite disjunction of formulas of the form $\varphi^p_1(x) \land \varphi^p_2(y)$ (if not we reach a contradiction considering a type containing the defining formula of $Z$ and the negation of all the formulas $\varphi^p_1(x) \land \varphi^p_2(y)$). It follows that $Z$ is a finite union of boxes $U_i\times V_i$ as desired.
\end{pf}

The following fact follows at once from the definition.

\begin{prop}\label{prop:box}
Let $X$, $Y$, $Z$ be definable sets.
Suppose that, for every positive integer~$n$, all definable subsets
of~$X^n\times Y$ are a finite union of $(X^n,Y)$-boxes,
and all definable subsets of~$X^n\times Z$ are finite
unions of $(X^n,Z)$-boxes. Then, for all~$n$, every definable subset
of~$X^n\times Y\times Z$ is a finite union of~$(X^n,Y\times Z)$-boxes.
\end{prop}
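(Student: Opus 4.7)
The plan is to show directly, given a definable $S \subseteq X^n \times Y \times Z$, that $S$ decomposes as a finite union $\bigsqcup_\ell E_\ell \times F_\ell$ with $E_\ell \subseteq X^n$ and $F_\ell \subseteq Y \times Z$. I would use the elementary observation that this is equivalent to asking that the fiber family $\{S^{\bar x} : \bar x \in X^n\}$, where $S^{\bar x} := \{(y,z) : (\bar x, y, z) \in S\} \subseteq Y \times Z$, has only finitely many distinct members: once that is known, the level sets of $\bar x \mapsto S^{\bar x}$ are definable (using one representative fiber as a parameter), partition $X^n$ into finitely many classes, and each class paired with its common fiber is one of the desired boxes.

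To control this fiber family, for each $z \in Z$ I would consider the equivalence relation $\sim_z$ on $X^n$ given by $\bar x \sim_z \bar x'$ iff $\{y : (\bar x, y, z) \in S\} = \{y : (\bar x', y, z) \in S\}$. The first hypothesis, applied to the slice $S_z \subseteq X^n \times Y$, says that $S_z$ is a finite union of $(X^n, Y)$-boxes, which is exactly to say that its $\bar x$-fibers take only finitely many values; so $\sim_z$ has only finitely many classes. The critical step is then to show that as $z$ ranges over $Z$, only finitely many distinct relations $\sim_z$ actually appear. For this I would apply the second hypothesis with $2n$ in place of $n$ to the definable set $R = \{(\bar x, \bar x', z) \in X^{2n} \times Z : \bar x \sim_z \bar x'\}$, writing $R = \bigcup_{i=1}^k A_i \times C_i$ as a finite union of $(X^{2n}, Z)$-boxes. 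Then $\sim_z$, viewed as a subset of $X^{2n}$, equals $\bigcup_{i : z \in C_i} A_i$, which depends only on the subset $\{i : z \in C_i\} \subseteq \{1,\ldots,k\}$, giving at most $2^k$ possibilities.

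Picking one witness $z_j$ for each distinct relation, the equivalence $\sim\,:=\,\bigcap_{z \in Z} \sim_z$ coincides with the finite intersection $\sim_{z_1} \cap \cdots \cap \sim_{z_m}$ and so has finitely many classes $E_1, \ldots, E_N$. Since $\bar x \sim \bar x'$ means precisely $S^{\bar x} = S^{\bar x'}$, picking representatives $\bar x_\ell \in E_\ell$ and setting $F_\ell := S^{\bar x_\ell}$ yields $S = \bigsqcup_\ell E_\ell \times F_\ell$, the required $(X^n, Y \times Z)$-box decomposition. The main obstacle is the uniform finiteness of the family $\{\sim_z\}_{z \in Z}$: the finite-indexness of each individual $\sim_z$ does not by itself guarantee that the intersection $\bigcap_z \sim_z$ has finite index, and this is exactly where invoking the second hypothesis at arity $2n$ (rather than just $n$) is essential.
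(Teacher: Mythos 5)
Your proof is correct and follows essentially the same route as the paper's: both introduce, for each $z\in Z$, the equivalence relation on $X^n$ identifying points with the same $Y$-fiber of the slice $S_z$, use the first hypothesis to see each such relation has finite index, use the second hypothesis at arity $2n$ to see that only finitely many distinct relations occur as $z$ varies, and then intersect. Your write-up merely makes explicit (via the box decomposition of $R\subseteq X^{2n}\times Z$) the step the paper states more tersely as ``by the hypothesis applied to $X^{2n}\times Z$, the family is finite.''
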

\begin{pf}
Let $S$ be a definable subset of~$X^n\times Y\times Z$.
Given $z\in Z$, consider the fiber $S_z \subseteq
X^n\times Y$ consisting of the pairs $(x,y)$ such that $(x,y,z)\in S$. Let
$E_z \subseteq X^n\times X^n$ be the following equivalence relation:
\[ a\,E_z\,b \quad \Longleftrightarrow \quad \forall y\in Y \;\; (a,y)\in S_z
\longleftrightarrow (b,y) \in S_z\;\text{.}\]
Then $\{E_z \mid
z\in Z\}$ is  a family of subsets of $X^n\times X^n$ indexed by $Z$, so by
the hypothesis applied to $X^{2n}\times Z$, it is finite.
By the hypothesis on $X^n\times Y$,
each equivalence relation $E_z$ has finitely many equivalence
classes; in fact, $S_z$ is a finite union of $(X^n,Y)$-boxes and the
equivalence classes of $E_z$ are the atoms of the boolean algebra
generated by the projections of these boxes on the component~$X^n$.
It follows that $E = \bigcap_z E_z \subseteq X^n\times X^n$
is again an equivalence relation with
finitely many classes. On the other hand, for  $a,b\in X^n$,
\[ a\,E\,b \quad \Longleftrightarrow \quad
\forall y\in Y, z\in Z \;\; (a,y,z)\in S \longleftrightarrow
(b,y,z)\in S\;\text{.}\]
We have thus proved that
%if $\pi_{X^n}\colon X^n\times Y \times Z\to X^n$ is the projection,
there are finitely
many subsets of the form $\pi_{Y\times Z}(\pi_{X^n}^{-1}(x)\cap S)$
with $x\in X^n$, which is desired result.
\end{pf}

\begin{cor}\label{cor:both}
Let $X$, $Y$, $Z$ be definable sets.
	If $X$ is orthogonal to both $Y$ and $Z$, then $X$ is orthogonal to $Y\times Z$.
\end{cor}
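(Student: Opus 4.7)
The plan is to deduce the corollary directly from Proposition \ref{prop:box} by applying it to suitable powers of $Y$ and $Z$, and then performing a harmless coordinate rearrangement.

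First, I would unpack what needs to be shown. To prove orthogonality of $X$ and $Y\times Z$, I must show that for every $k,m \in \N$, every definable subset of $X^k \times (Y\times Z)^m$ is a finite union of $(X^k, (Y\times Z)^m)$-boxes. After permuting coordinates, $(Y\times Z)^m$ is in definable bijection with $Y^m \times Z^m$, so it suffices to prove the analogous statement for $X^k \times Y^m \times Z^m$, with boxes of the form $A \times B$ where $A\subseteq X^k$ and $B\subseteq Y^m\times Z^m$.

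Next, I would observe that the hypotheses of Proposition \ref{prop:box} hold with $Y$ replaced by $Y^m$ and $Z$ replaced by $Z^m$. Indeed, since $X$ is orthogonal to $Y$, the definition of orthogonality, applied with exponents $(n,m)$, tells us that every definable subset of $X^n\times Y^m$ is a finite union of $(X^n, Y^m)$-boxes; the symmetric statement holds for $Z^m$. Thus Proposition \ref{prop:box} yields, for every $n$, that every definable subset of $X^n\times Y^m \times Z^m$ is a finite union of $(X^n, Y^m\times Z^m)$-boxes.

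Finally, specialising $n=k$ and transporting the result back to $X^k \times (Y\times Z)^m$ via the coordinate permutation gives the desired box decomposition. Since $m$ and $k$ were arbitrary, $X$ is orthogonal to $Y\times Z$. There is no real obstacle here: once Proposition \ref{prop:box} is in hand, the only thing to be careful about is to feed it powers $Y^m$ and $Z^m$ rather than $Y$ and $Z$ themselves, and to notice that the rearrangement between $Y^m\times Z^m$ and $(Y\times Z)^m$ preserves the property of being a finite union of boxes on the $X^k$ side.
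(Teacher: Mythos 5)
Your proof is correct and follows the paper's intended route: the paper states Corollary~\ref{cor:both} without proof as an immediate consequence of Proposition~\ref{prop:box}, and your argument is exactly that derivation. You also correctly supply the one detail the paper leaves implicit, namely that one must feed Proposition~\ref{prop:box} the powers $Y^m$ and $Z^m$ (whose hypotheses come straight from Definition~\ref{def:orth}) and then permute coordinates to pass between $Y^m\times Z^m$ and $(Y\times Z)^m$.
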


\begin{cor}
	Suppose $X_1, \ldots, X_n$ are  pairwise orthogonal definable sets. Then $X_1,\ldots, X_n$ are orthogonal.
\end{cor}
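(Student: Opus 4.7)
My plan is to argue by induction on $n$. The case $n=2$ is immediate from the definitions, since pairwise orthogonality for two sets is the same as orthogonality. So suppose $n \geq 3$ and the statement holds for $n-1$. Given pairwise orthogonal $X_1,\dots,X_n$, any sub-collection is also pairwise orthogonal, so the inductive hypothesis gives that $X_1,\dots,X_{n-1}$ are orthogonal in the sense of \prettyref{def:orth}. I then need to show that for arbitrary $k_1,\dots,k_n \in \N$, every definable subset of $X_1^{k_1}\times\dots\times X_n^{k_n}$ is a finite union of $(X_1^{k_1},\dots,X_n^{k_n})$-boxes.

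The strategy is to first group the first $n-1$ factors together. Writing $Y = X_1^{k_1}\times\dots\times X_{n-1}^{k_{n-1}}$, it suffices to establish two facts: (i) every definable subset of $X_n^{k_n}\times Y$ is a finite union of $(X_n^{k_n},Y)$-boxes; (ii) every definable subset of $Y$ appearing as the $Y$-component of such a box is a finite union of $(X_1^{k_1},\dots,X_{n-1}^{k_{n-1}})$-boxes. Part (ii) is exactly the inductive hypothesis, and substituting each $Y$-component decomposition into the boxes from (i) yields the required decomposition into $(X_1^{k_1},\dots,X_n^{k_n})$-boxes.

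Thus the core of the argument is (i), which I would derive by iterated application of \prettyref{prop:box} with $X = X_n$. Pairwise orthogonality gives, for each $i<n$ and each pair of exponents $m, k_i$, that every definable subset of $X_n^m\times X_i^{k_i}$ is a finite union of $(X_n^m, X_i^{k_i})$-boxes. Applying \prettyref{prop:box} to $X=X_n$, $Y=X_1^{k_1}$, $Z=X_2^{k_2}$ shows that for every $m$, every definable subset of $X_n^m\times X_1^{k_1}\times X_2^{k_2}$ is a finite union of $(X_n^m, X_1^{k_1}\times X_2^{k_2})$-boxes. Re-applying \prettyref{prop:box} with the first two factors fused (and using the hypothesis for the pair $X_n, X_3$), we add a factor $X_3^{k_3}$, and continue up to $X_{n-1}^{k_{n-1}}$. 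This yields (i) in the form needed.

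The main (minor) obstacle is purely bookkeeping: making sure that \prettyref{prop:box} can be iterated in the way described, since it is phrased with $X^n$ as the ``privileged'' factor and a single $Y$, $Z$ on the other side. The induction fuses $Y\times Z$ at each step into the new ``$Y$'' of the next step, which is legitimate because the hypotheses of \prettyref{prop:box} only concern definable subsets of $X_n^m$ paired with a single further definable set, and once we have shown at stage $j$ that every definable subset of $X_n^m\times X_1^{k_1}\times\dots\times X_j^{k_j}$ is a finite union of $(X_n^m, X_1^{k_1}\times\dots\times X_j^{k_j})$-boxes, this provides precisely the hypothesis on $X_n^m\times (X_1^{k_1}\times\dots\times X_j^{k_j})$ needed to run \prettyref{prop:box} at stage $j+1$ with the new factor $X_{j+1}^{k_{j+1}}$.
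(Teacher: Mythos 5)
Your proof is correct and takes essentially the same route as the paper's: the paper reduces the statement to showing that each $X_i$ is orthogonal to the product of the remaining sets, invoking Corollary~\ref{cor:both} (which is itself the iterated form of Proposition~\ref{prop:box}) together with induction on $n$. Your argument is just a more explicit unwinding of that same reduction, applying Proposition~\ref{prop:box} directly and carrying out the bookkeeping the paper leaves implicit.
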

\begin{pf}
	If suffices to show that each $X_i$ is orthogonal to the product of the other sets $X_j$. This follows by \ref{cor:both} and induction on $n$.
\end{pf}

We shall need the following result.

\begin{cor}\label{cor:weak-ortho}
Let $X$ and $Y$ be definable sets. Then $X$ and~$Y$ are orthogonal
if and only if for every positive integer~$n$, all definable subsets
of~$X^n\times Y$ are finite unions of $(X^n,Y)$-boxes.
\end{cor}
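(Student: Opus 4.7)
The plan is to observe that one direction is immediate from the definition of orthogonality (take $k_1=n$ and $k_2=1$), so the entire content of the corollary lies in the converse: the "weak" hypothesis that subsets of $X^n\times Y$ decompose into $(X^n,Y)$-boxes for every $n$ already forces subsets of $X^{k_1}\times Y^{k_2}$ to decompose into $(X^{k_1},Y^{k_2})$-boxes for all $k_1,k_2$.

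For the converse I would fix $k_1$ and induct on $k_2\geq 1$. The base case $k_2=1$ is exactly the hypothesis. For the inductive step, assume that for every positive integer $n$ all definable subsets of $X^n\times Y^{k_2}$ are finite unions of $(X^n,Y^{k_2})$-boxes. I would then apply Proposition~\ref{prop:box} with the substitution $Y\leftarrow Y^{k_2}$ and $Z\leftarrow Y$, treating $Y^{k_2}$ as a single definable set. The two hypotheses of Proposition~\ref{prop:box} are precisely the inductive hypothesis (for subsets of $X^n\times Y^{k_2}$) and the original assumption of the corollary (for subsets of $X^n\times Y$), both of which are required to hold for \emph{every} $n$, as indeed they do. The conclusion yields that every definable subset of $X^n\times Y^{k_2}\times Y=X^n\times Y^{k_2+1}$ is a finite union of $(X^n,Y^{k_2+1})$-boxes, completing the induction. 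Specialising to $n=k_1$ then gives the required decomposition for arbitrary $k_1,k_2$, establishing orthogonality.

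There is no real obstacle here beyond bookkeeping: the essential content has already been packaged into Proposition~\ref{prop:box}, whose statement is carefully formulated to keep the first component $X^n$ uniform while combining two "$Y$-type" components. The only point to be slightly careful about is that the inductive hypothesis must hold uniformly in $n$ (not just for one fixed $n$), which is why the induction is set up with "for every $n$" built into the inductive statement. The degenerate case $k_2=0$ is handled by the convention $X^{k_1}\times Y^{0}=X^{k_1}$, so that any definable subset is trivially a single box.
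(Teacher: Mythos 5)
Your proof is correct and follows what the paper intends: the corollary is stated without proof immediately after Proposition~\ref{prop:box}, precisely because it follows from that proposition by the induction on $k_2$ (with $Y\leftarrow Y^{k_2}$, $Z\leftarrow Y$) that you carry out. Your bookkeeping — keeping ``for every $n$'' inside the inductive statement and invoking the convention $X^{k_1}\times Y^{0}=X^{k_1}$ — is exactly right.
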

For the main results of this paper we make no saturation assumptions on the ambient structure $\CM$. It is however worth mentioning that under a saturation assumption we can strengthen Corollary \ref{cor:weak-ortho} as follows.

\begin{prop}\label{prop:XY-box}
	If $\CM$ is $\aleph_0$-saturated, then $X$ and~$Y$ are orthogonal
	if and only if all definable subsets
	of~$X\times Y$ are finite unions of $(X,Y)$-boxes. The same conclusion holds without saturation provided $\CM$ is o-minimal, or more generally if $\CM$ eliminates the quantifier $\exists^\infty$. 
\end{prop}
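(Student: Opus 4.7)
The plan is to reduce the non-trivial implication (finite-union hypothesis $\Rightarrow X \perp Y$) to its type-theoretic form via Remark \ref{rem:orthogonal}, and then bootstrap from $X \times Y$ to $X^n \times Y$ one $X$-coordinate at a time. By Corollary \ref{cor:weak-ortho}, it is enough to show that for every $n$ every definable subset of $X^n \times Y$ is a finite union of $(X^n,Y)$-boxes; by Remark \ref{rem:orthogonal} applied with $X_1 = X^n$ and $X_2 = Y$, this amounts to showing that every type $p(\bar x, y)$ over $\CM$ concentrated on $X^n \times Y$ is determined by the pair consisting of its marginals $p|_{\bar x}$ and $p|_{y}$.

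Before iterating, I would establish a uniform-bound lemma. For each formula $\varphi(x,y,\bar z)$, the hypothesis says that for every parameter $\bar c$ the set $\varphi(\CM,\CM,\bar c)$ has only finitely many distinct $Y$-slices; the condition ``strictly more than $N$ distinct slices'' is first-order in $\bar z$, so the partial type asserting this for every $N$ is unrealisable in $\CM$, hence by $\aleph_0$-saturation not finitely satisfiable, producing a uniform bound $N_\varphi$. Because this uniform bound is itself first-order, it is preserved by passing to any elementary extension $\monster \succ \CM$. Consequently, for every parameter set $\mathcal P \subseteq \monster$, every definable (over $\mathcal P$) subset of $X \times Y$ in $\monster$ is still a finite union of $(X,Y)$-boxes, and Remark \ref{rem:orthogonal} then asserts that types on $X \times Y$ over $\mathcal P$ are determined by their marginals.

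Now comes the iteration. Let $p,p'$ be types on $X^n \times Y$ over $\CM$ with the same marginals and realise them in a sufficiently saturated $\monster$ as $(\bar a,b) \models p$ and $(\bar a',b') \models p'$. A $\CM$-automorphism of $\monster$ sending $\bar a$ to $\bar a'$ reduces the problem to the case $\bar a = \bar a' = (a_1,\dots,a_n)$: I must show that $b \equiv_\CM b'$ implies $b \equiv_{\CM\bar a} b'$. Setting $\CM_k = \CM \cup \{a_1,\dots,a_k\}$, I argue by induction that $b \equiv_{\CM_k} b'$ for $k=0,\dots,n$. For the step $k \to k+1$, the types of $(a_{k+1},b)$ and $(a_{k+1},b')$ over $\CM_k$ have the same $X$- and $Y$-marginals (using the inductive hypothesis for the $Y$-part), so by the conclusion of the previous paragraph they coincide, giving $b \equiv_{\CM_{k+1}} b'$. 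After $n$ steps the required equivalence is obtained, concluding the proof.

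The crux is the uniform-bound step, which turns the pointwise hypothesis into a statement preserved by elementary extensions; saturation enters the argument only there, via a compactness type-realisation trick. Without saturation, the same uniformity is supplied directly by elimination of $\exists^\infty$ (and in particular by o-minimality), so the rest of the argument goes through in that setting without change.
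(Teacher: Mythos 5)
Your reduction to types via Corollary~\ref{cor:weak-ortho} and Remark~\ref{rem:orthogonal}, the uniform-bound lemma obtained from $\aleph_0$-saturation (or elimination of $\exists^\infty$), and the automorphism step reducing to $\bar a=\bar a'$ are all correct, and the uniform bound is exactly where the paper also uses saturation. The gap is in the inductive step $k\to k+1$, where you invoke ``types on $X\times Y$ over $\mathcal P$ are determined by their marginals'' for $\mathcal P=\CM_k=\CM\cup\{a_1,\dots,a_k\}$. Remark~\ref{rem:orthogonal} does not give this: it concerns types over a \emph{model}, and its proof uses that the boxes decomposing a $\mathcal P$-definable set are themselves $\mathcal P$-definable and hence belong to the marginal types. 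Over $\CM_k$, which contains new elements $a_i\in X(\monster)\setminus \CM$, a $\CM_k$-definable $W\subseteq X\times Y$ is indeed a finite union of boxes $\bigcup_j C_j\times F_j$ by your transferred bound, but the individual $C_j,F_j$ are in general only definable over $\CM_k$ together with representatives of the classes, and $\Aut(\monster/\CM_k)$ may permute the pairs $(C_j,F_j)$ nontrivially. In that situation one can have $b\in F_1\setminus F_2$ and $b'\in F_2\setminus F_1$ with $b\equiv_{\CM_k}b'$, and $a\in C_1$, so that $(a,b)\in W$ while $(a,b')\notin W$: equality of marginals over $\CM_k$ does not determine the type. (The phenomenon already occurs over $\mathcal P=\emptyset$ for $Z=(X_1\times Y_1)\cup(X_2\times Y_2)$ when an automorphism swaps the two halves.)

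Worse, the missing statement is not a side lemma: unwinding definitions, ``types on $X\times Y$ over $\CM\cup\{a_1,\dots,a_k\}$ are determined by their marginals, for all choices of $a_1,\dots,a_k\in X(\monster)$'' is essentially equivalent to ``every definable subset of $X^{k+1}\times Y$ is a finite union of boxes'', i.e.\ the case $n=k+1$ of the statement you are proving. So the argument becomes circular exactly where the real work must happen. The paper fills this step with a genuinely combinatorial argument: writing $R_t$ for the fiber-equivalence on $Y$ induced by the slice $S_t$, it constructs a \emph{finite set of representatives} $A\subseteq Y$ meeting every class of every $R_t$, by iterating the fact that a definable family of subsets of $Y$ indexed by $X$ is finite (which is just the $n=1$ hypothesis); with concrete representatives $a\in A$ in hand, the classes $P_{t,a}$ form definable families indexed by $X$, so $\bigcap_{t\in X}R_t$ has finitely many classes and the conclusion follows. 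Your proof needs an argument of this kind (or some other substitute) at the inductive step; as written it does not go through.
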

\begin{pf}
	Suppose that all definable subsets
	of~$X\times Y$ are finite unions of $(X,Y)$-boxes. Let $S$ be a definable subset of $X^n \times Y$. It suffices to show that $S$ is a finite union of $(X^n,Y)$-boxes (by Corollary \ref{cor:weak-ortho}). We reason by induction on $n$. The case $n=1$ holds by the assumptions. Assume $n>1$. For $t\in X$, consider the set
$$S_t = \{ (x,y) \mid (t,x,y) \in S\} \subseteq X^{n-1}\times Y.$$ By induction, $S_t$ is a finite union of $(X^{n-1},Y)$-boxes.
	Let $R_t\subseteq Y\times Y$ be the equivalence relation defined by $uR_t v \iff (\forall x\in X^{n-1})\; (x,u)\in S_t \leftrightarrow (x,v)\in S_t$.  Note that $R_t$ has finitely many equivalence classes. By saturation (or elimination of $\exists^\infty$), there is a uniform bound $k\in \N$, such that for all $t\in X$, there are at most $k$ equivalence classes modulo $R_t$.
	
	We claim that there is a finite subset $A$ of $Y$ such that for all $t\in X$ each equivalence class of $R_t$ intersects $A$. To this aim we prove, by induction on $j\leq k$, that there is a finite subset $A_j$ of $Y$ such that for all $t\in Y$ there are at most $k-j$ equivalence classes of $R_t$ which do not intersect $A_j$. Clearly we can take $A_0$ to be the empty set. Let us construct $A_{j+1}$. Consider the definable family $(Q_{t})_{t\in X}$ where $Q_{t}\subseteq Y$ is the union of the $R_t$-equivalence classes which intersect $A_j$. By the assumptions the family  $(Q_{t})_{t\in X}$ contains finitely many distinct subsets $Q_1,\ldots, Q_l$ of $Y$. We obtain $A_{j+1}$ adding to $A_j$ a point in $Y \setminus Q_i$ for each $i=1,\ldots, l$ such that $Q_i \neq Y$. The claim is thus proved taking $A = A_k$.
	
	Now we claim that the equivalence relation $R = \bigcap_{t\in X} R_t$ has finitely many equivalence classes. Indeed, given $a\in A$ consider the family $(P_{t,a})_{t\in X}$ where $P_{t,a}\subseteq Y$ is the $R_t$-equivalence class of $a$. For each $a\in A$, by the assumption there are finitely many sets of the form $P_{t,a}$ for $t\in X$.  Each $R_t$-equivalence class is of the form $P_{t,a}$. Hence each $R$-equivalence class belongs to the finite boolean algebra generated by the sets $P_{t,a}$. The claim is thus proved.
	
	Now for each $R$-equivalence class $B\subseteq Y$ and for each $y_1,y_2\in B$, we have $\forall t\in X, \forall x \in X^{n-1} \; (t,x,y_1)\in S \iff (t,x,y_2) \in S$. Thus $S \cap \pi_Y^{-1}B$ is a box where $\pi_Y:X^n\times Y \to Y$ is the projection. Therefore $S$ is a finite union of $(X^n,Y)$-boxes.
\end{pf}

We now turn to the notion of internality.

\begin{defn}[{\cite[Lemma 10.1.4]{Tent2012}}]\label{def-int} Let \cal M be a structure.  Given two definable sets $X$ and $V$, we say that	$X$ is {\bf internal to $V$}, or
	\textbf{$V$-internal}, if there is a definable surjection from $V^n$ to $X$, for some $n$.
\end{defn}

\begin{fact}[{\cite[Lemma 2.2]{Berarducci2013c}}] \label{fact:family} Let $X$ and $Y$ be orthogonal definable sets and let $(T_s \mid s\in S)$ be a definable family of subsets of a $Y$-internal set $T$ indexed by an $X$-internal set $S$. Then the family contains only finitely many distinct sets $T_s \subseteq T$.
\end{fact}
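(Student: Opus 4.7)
The plan is to reduce the problem to the orthogonality statement for subsets of $X^k \times Y^m$, using the definable surjections given by internality to pull back the family and then pull back each $T_s$ through $T$.

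First, I would apply internality to fix a definable surjection $f\colon X^k \twoheadrightarrow S$ and a definable surjection $g\colon Y^m \twoheadrightarrow T$. Since $f$ is surjective, it is enough to bound the number of distinct sets in the pulled-back family $(T'_x)_{x \in X^k}$ defined by $T'_x := T_{f(x)}$: every $T_s$ equals some $T'_x$. Now encode this family as a single definable subset of $X^k \times Y^m$, namely
\[
R \;=\; \bigl\{\,(x,y)\in X^k\times Y^m \,:\, g(y)\in T'_x\,\bigr\}.
\]

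Next, I would apply Corollary \ref{cor:weak-ortho}: since $X$ and $Y$ are orthogonal, every definable subset of $X^k \times Y^m$ is a finite union of $(X^k,Y^m)$-boxes. Write $R = \bigcup_{i=1}^N A_i\times B_i$ with $A_i \subseteq X^k$ and $B_i \subseteq Y^m$. For a fixed $x \in X^k$, the fiber
\[
R_x \;=\; \bigl\{\,y\in Y^m \,:\, g(y)\in T'_x\,\bigr\} \;=\; \bigcup_{i\,:\, x\in A_i} B_i
\]
depends only on the subset $\{i : x\in A_i\}$ of $\{1,\dots,N\}$, so there are at most $2^N$ distinct fibers $R_x$ as $x$ varies over $X^k$.

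Finally, since $g$ is surjective onto $T$, we have $T'_x = g(R_x)$, so the map $R_x \mapsto T'_x$ is well-defined and surjective onto the family $\{T'_x : x \in X^k\}$. Hence there are at most $2^N$ distinct sets $T'_x$, and therefore at most $2^N$ distinct sets $T_s$ for $s\in S$. There is no real obstacle here beyond noting that orthogonality is preserved under taking cartesian powers of the two sides, which is exactly what Corollary \ref{cor:weak-ortho} packages; the whole argument is a single reduction, and the finite-box decomposition does all the work.
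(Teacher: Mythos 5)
Your proof is correct. Note that the paper does not actually prove this fact --- it imports it as \cite[Lemma 2.2]{Berarducci2013c} --- and your argument (pull the family back along the surjections $f\colon X^k\twoheadrightarrow S$ and $g\colon Y^m\twoheadrightarrow T$, decompose the resulting subset of $X^k\times Y^m$ into finitely many boxes, and observe that each fiber $R_x=g^{-1}(T'_x)$ is a union of a subfamily of the $B_i$, so there are at most $2^N$ of them and hence at most $2^N$ sets $T'_x=g(R_x)$) is exactly the standard one. One cosmetic remark: the finite-box decomposition of a definable subset of $X^k\times Y^m$ is immediate from Definition~\ref{def:orth} itself (taking $n=2$, $k_1=k$, $k_2=m$); Corollary~\ref{cor:weak-ortho} is the converse reduction to the case $m=1$ and is not what you need here, but this does not affect the argument.
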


\begin{prop}[{\cite[Proposition 2.3]{Berarducci2013c}}]\label{prop:finite-proj}
	Let $X$ and $Y$ be orthogonal definable sets with $|X|\geq 2$, and  $S$  a definable subset of $X\times Y$. Then $S$ is $X$-internal if and only if its projection onto $Y$ is finite.
\end{prop}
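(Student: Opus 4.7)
The plan is to prove the two implications separately; only the forward direction will use the orthogonality hypothesis.

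For the forward direction, assume there is a definable surjection $f\colon X^n\to S$ and consider the composition $g=\pi_Y\circ f\colon X^n\to Y$, whose image is exactly $\pi_Y(S)$. The graph of $g$ is a definable subset of $X^n\times Y$, so by Corollary \ref{cor:weak-ortho} and the orthogonality of $X$ and $Y$, it decomposes as a finite union of $(X^n,Y)$-boxes $A_i\times B_i$. The key observation is that any such box contained in the graph of a function must have $B_i$ a singleton whenever $A_i$ is nonempty: otherwise $g$ would take two distinct values at some point of $A_i$. Hence $\pi_Y(S)=g(X^n)$ is contained in the finite set $\bigcup_i B_i$.

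For the backward direction, suppose $\pi_Y(S)=\{y_1,\dotsc,y_k\}$ and write $S=\bigsqcup_{i=1}^k (S_i\times\{y_i\})$, where $S_i=\{x\in X\suchthat (x,y_i)\in S\}$ is definable. Each summand $S_i\times\{y_i\}$ is $X$-internal, witnessed by a map $X\to S_i\times\{y_i\}$ sending $x\mapsto(x,y_i)$ when $x\in S_i$ and to a fixed point of the summand otherwise. It then suffices to observe that a finite union of $X$-internal sets is $X$-internal: this is where $|X|\geq 2$ enters, as we can pick distinct $a,b\in X$ and combine two given surjections $h_j\colon X^{n_j}\to T_j$ into a single surjection $X^{N+1}\to T_1\cup T_2$ by using the first coordinate as a flag (equal to $a$ or not) to dispatch between the two. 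Iterating $k-1$ times yields the required definable surjection from some power of $X$ onto $S$.

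The main conceptual content lies in the forward direction and is driven entirely by orthogonality, which rigidifies the fiber structure of $g$ over $Y$. The backward direction amounts to a standard coding argument, and $|X|\geq 2$ is precisely the minimal requirement needed to carry out the necessary case distinction definably.
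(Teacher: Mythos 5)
Your proof is correct. Note that the paper does not actually prove this proposition: it is quoted from \cite[Proposition 2.3]{Berarducci2013c}, so there is no in-text argument to compare against. Both directions of your argument are sound: in the forward direction, applying Corollary~\ref{cor:weak-ortho} to the graph of $\pi_Y\circ f\subseteq X^n\times Y$ and observing that a box inside the graph of a function has singleton second factor is exactly the right use of orthogonality; in the backward direction, the flag-coordinate trick is the standard way to see that a finite union of $X$-internal sets is $X$-internal, and you correctly isolate $|X|\geq 2$ as the hypothesis that makes it work. For comparison, the forward direction can also be obtained more quickly from Fact~\ref{fact:family}: the family of singletons $\bigl(\{\pi_Y(s)\}\bigr)_{s\in S}$ is a definable family of subsets of the $Y$-internal set $Y$ indexed by the $X$-internal set $S$, hence has only finitely many distinct members, which says precisely that $\pi_Y(S)$ is finite. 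That route is closer in spirit to how the cited source argues, but your graph-of-a-function decomposition is an equally valid and essentially equivalent use of orthogonality.
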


It is clear that if $X$ and $Y$ are infinite and one of them is internal to the other, then the two sets are non-orthogonal. Also, if one of $X$ and $Y$ is finite, then the two sets are orthogonal.

\begin{prop}\label{prop:intort}
	Let $X$ and $Y$ be orthogonal definable sets. If $U$ is internal to $X$ and $V$ is internal to $Y$, then $U$ and $V$ are orthogonal.
\end{prop}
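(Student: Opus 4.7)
The plan is to prove orthogonality of $U$ and $V$ by pulling back an arbitrary definable subset of a product $U^a \times V^b$ through the internality surjections, applying the orthogonality hypothesis between $X$ and $Y$, and then pushing forward. Fix definable surjections $f \colon X^m \twoheadrightarrow U$ and $g \colon Y^k \twoheadrightarrow V$ witnessing the internalities. For any $a,b \in \N$ the map $h := f^a \times g^b \colon X^{ma} \times Y^{kb} \twoheadrightarrow U^a \times V^b$ is a definable surjection.

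Now let $S \subseteq U^a \times V^b$ be definable and set $S' := h^{-1}(S) \subseteq X^{ma} \times Y^{kb}$. Since $X$ and $Y$ are orthogonal, $S'$ is a finite union $\bigcup_{i=1}^r A'_i \times B'_i$ of $(X^{ma}, Y^{kb})$-boxes. Because $h$ is surjective and $S = h(S')$, we obtain
\[
S \;=\; \bigcup_{i=1}^r f^a(A'_i) \times g^b(B'_i),
\]
which is a finite union of $(U^a, V^b)$-boxes. This yields orthogonality of $U$ and $V$ by Definition \ref{def:orth}.

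There is no real obstacle: the argument is a straightforward pullback--pushforward through the internality maps, using only that products of surjections are surjections and that images of Cartesian products are Cartesian products of images. (Alternatively one could invoke Corollary \ref{cor:weak-ortho} and carry out the same argument only for $S \subseteq U^n \times V$, but the general case is no harder.)
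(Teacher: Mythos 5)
Your argument is correct and coincides with the paper's own proof: the paper likewise takes definable surjections $f\colon X^m\to U^k$ and $g\colon Y^n\to V^l$, writes $(f\times g)^{-1}(S)$ as a finite union of $(X^m,Y^n)$-boxes by orthogonality of $X$ and $Y$, and pushes these boxes forward to get $S=\bigcup_i f(A_i)\times g(B_i)$. The only cosmetic difference is that you build the surjection onto $U^a\times V^b$ as a product of powers of surjections onto $U$ and $V$, which is the same thing.
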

\begin{pf}
	Let $S\subseteq U^{k}\times V^{l}$ be definable. By the assumption there are definable surjective maps $f:X^m\to U^{k}$ and $g:Y^n\to V^l$.Then, by orthogonality, $(f\times g)^{-1}(S)$ is a finite union $\bigcup_i A_i\times B_i$ of $(X^m, Y^n)$-boxes. So $S = \bigcup_i f(A_i) \times g(B_i)$ is a finite union of $(U^{k},V^{l})$-boxes.
\end{pf}

\section{Cohesive sets}\label{sec:cohesive}

In this section we work in an arbitrary structure~$\CM$, except for
Theorem~\ref{thm:dim1cohesive} where we assume o-minimality. We introduce and study the following two key notions, also mentioned in the introduction.

\begin{defn} \label{defn:cohesive} Let $Z$ be a definable set.
	\begin{itemize}
		\item $Z$ is \textbf{indecomposable} if for every orthogonal definable sets $X,Y$, if $Z$ is internal to $X\times Y$, then $Z$ is either internal to $X$ or internal to $Y$.
		\item $Z$ is {\bf cohesive} if all definable sets $X$ and $Y$ not orthogonal to $Z$ are not orthogonal to each other.
	\end{itemize}
\end{defn}

\begin{prop}\label{prop:int-cohesive} \mbox{}
	\begin{enumerate}
		\item 	A definable set internal to a cohesive set is cohesive.
		\item Any cohesive set is indecomposable.
	\end{enumerate}
\end{prop}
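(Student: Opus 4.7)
The plan for part (1) is to show that internality transfers non-orthogonality. Given $W$ internal to $Z$ via a definable surjection $f\colon Z^n \to W$, and a definable $S \subseteq X^k \times W^\ell$ witnessing non-orthogonality of $X$ and $W$, I would pull $S$ back along $\mathrm{id}\times f^\ell \colon X^k \times Z^{n\ell} \to X^k \times W^\ell$. If the preimage were a finite union of $(X^k, Z^{n\ell})$-boxes $\bigcup_i A_i \times B_i$, then applying $\mathrm{id} \times f^\ell$ gives $S = \bigcup_i A_i \times f^\ell(B_i)$ as a finite union of $(X^k, W^\ell)$-boxes, a contradiction. So $X$ is non-orthogonal to $Z$; the same argument applies to $Y$. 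Cohesiveness of $Z$ then yields that $X$ and $Y$ are non-orthogonal, proving cohesiveness of $W$.

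For part (2), assume $Z$ is cohesive and internal to $X \times Y$, with $X, Y$ orthogonal. The first move is to invoke cohesiveness: if $Z$ were non-orthogonal to both $X$ and $Y$, cohesiveness of $Z$ would force $X$ and $Y$ to be non-orthogonal, contradicting the hypothesis. So, without loss of generality, $Z$ is orthogonal to $X$, and the task reduces to showing $Z$ is internal to $Y$. Fix a definable surjection $f \colon X^n \times Y^n \to Z$, and form the auxiliary definable set
\[ G = \{(a, z) \in X^n \times Z : z \in f(\{a\} \times Y^n)\}. \]
Proposition~\ref{prop:intort} applied to the orthogonal pair $X, Z$ gives that $X^n$ and $Z$ are orthogonal, so $G$ decomposes as a finite union $\bigcup_{i=1}^m A_i \times B_i$ of $(X^n, Z)$-boxes. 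Since $f$ is surjective, the projection of $G$ onto $Z$ is all of $Z$, so $Z = \bigcup_i B_i$. Picking $a_i \in A_i$, each $B_i$ is a definable subset of the $Y$-internal set $f(\{a_i\} \times Y^n)$, hence itself $Y$-internal; so $Z$ is a finite union of $Y$-internal sets and therefore $Y$-internal.

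I expect the main obstacle to be the setup in part (2): finding the right auxiliary set $G \subseteq X^n \times Z$ which brings the $X$--$Z$ orthogonality hypothesis to bear on a decomposition of $Z$ into $Y$-internal pieces. The surrounding manipulations (a definable subset of a $V$-internal set is $V$-internal, and a finite union of $V$-internal sets is $V$-internal) are routine and handled by composing internality surjections with suitable definable restrictions and case-splits.
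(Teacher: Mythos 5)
Your proof is correct. Part (1) is essentially the paper's argument: your pullback computation is precisely the proof of Proposition~\ref{prop:intort} (if the preimage of $S$ were a finite union of boxes, its image under $\mathrm{id}\times f^\ell$ would exhibit $S$ as one too), and the paper simply cites that proposition to transfer non-orthogonality along internality before invoking cohesiveness of $Z$. Part (2), however, takes a genuinely different route. The paper never applies cohesiveness to $X$ and $Y$ themselves; instead, after reducing to a surjection $f\colon X\times Y\to Z$, it looks at the images of the partial maps $f_x=f(x,-)$ and $f^y=f(-,y)$, which are $Y$-internal and $X$-internal subsets of $Z$ and hence orthogonal to each other, and uses cohesiveness only through its consequence that $Z$ cannot contain two infinite orthogonal definable subsets; this forces one family of images to be everywhere finite, and an equivalence-relation argument via Fact~\ref{fact:family} then yields finitely many $x_1,\dots,x_k$ with $Z=f\bigl(\bigcup_{i\le k}\{x_i\}\times Y\bigr)$. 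You instead spend cohesiveness once, at the outset, on the ambient sets $X$ and $Y$, concluding that $Z$ is orthogonal to one of them, and then a single box decomposition of the relation $G\subseteq X^n\times Z$ exhibits $Z$ directly as a finite union of $Y$-internal fibers $f(\{a_i\}\times Y^n)$. Your version is shorter and avoids both the case split and Fact~\ref{fact:family}; the paper's version, because it only uses the weaker property that $Z$ contains no two infinite orthogonal definable subsets, establishes indecomposability under that formally weaker hypothesis. Both arguments share the same harmless reliance on the routine facts that definable subsets and finite unions of $Y$-internal sets are $Y$-internal.
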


\begin{pf} (1) Let $X$ be internal to $Y$. If $A$ and $B$ are non-orthogonal to $X$,  then by Proposition \ref{prop:intort} they are non-orthogonal to $Y$. Thus if $Y$ is cohesive, so is $X$.
	
We prove (2). Let $Z$ be a cohesive
set and suppose that $Z$ is internal to $X\times Y$ where $X$ and $Y$ are
orthogonal. By definition there is $m\in \N$ and a surjective definable
map $f: X^m\times Y^m \to Z$. Since $X^m$ and $Y^m$ are orthogonal, for
the sake of our argument we can assume $m=1$. So we have a surjective
definable map $f:X\times Y \to Z$ and we need to show that $Z$ is
$X$-internal or $Y$-internal. For $x\in X$ and $y\in Y$, let $f_x(y) =
f(x,y) = f^y(x)$. The image of $f_x$ is $Y$-internal and the image of
$f^y$ is $X$-internal.
	These two images are then orthogonal (Proposition \ref{prop:intort}), so they cannot be both infinite by the hypothesis on $Z$.  It follows that either $\Img(f_x)$ is finite for all $x\in X$ or $\Img(f^y)$ is finite for all $y\in Y$. By symmetry let us assume that $\Img(f^y)\subseteq Z$ is finite for all $y$.
	Let $E_y\subseteq X^2$ be the equivalence relation defined by $xE_y x' \iff f(x,y) = f(x',y)$. Then $E_y$ is a definable equivalence relation on $X$ of finite index. Since $(E_y \subseteq X\times X \mid y\in Y)$ is a definable family of subsets of an $X$-internal set indexed by a $Y$-internal set, by orthogonality there are finitely many sets of the form $E_y$ for $y\in Y$ (Fact \ref{fact:family}). The intersection $E = \bigcap_{y\in Y}E_y$ is then again a definable equivalence relation of finite index on $X$. Let $x_1, \dots, x_k$ be representatives for the equivalence classes of $E$. Then $Z$ is the image of the restriction of $f$ to $\bigcup_{i\leq k} \{x_i\} \times Y$, and therefore it is $Y$-internal.
\end{pf}

\begin{prop}\label{prop:prod-cohesive}
	If $X$ and $Y$ are cohesive and non-orthogonal, then $X\times Y$ is cohesive.
\end{prop}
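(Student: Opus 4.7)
The plan is to prove the contrapositive-style statement: take arbitrary definable sets $A$ and $B$ both non-orthogonal to $X\times Y$, and show that $A$ and $B$ are non-orthogonal to each other. The first move is a reduction: I claim that any definable set $A$ non-orthogonal to $X\times Y$ must be non-orthogonal to at least one of $X$ or $Y$. This is just the contrapositive of Corollary~\ref{cor:both}: if $A$ were orthogonal both to $X$ and to $Y$, then $A$ would be orthogonal to $X\times Y$.

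With this reduction in hand, I would then split into three cases, depending on which of $X$, $Y$ each of $A$, $B$ is non-orthogonal to. If both $A$ and $B$ are non-orthogonal to $X$, then cohesiveness of $X$ delivers that $A$ and $B$ are non-orthogonal; the symmetric case where both are non-orthogonal to $Y$ is handled the same way by cohesiveness of $Y$.

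The one slightly less immediate case, which I view as the place where the hypothesis that $X$ and $Y$ are non-orthogonal actually gets used, is when (say) $A$ is non-orthogonal to $X$ while $B$ is non-orthogonal to $Y$. Here I would chain cohesiveness through both sets: since $Y$ is non-orthogonal to $X$ (by hypothesis) and $A$ is non-orthogonal to $X$, the cohesiveness of $X$ gives that $A$ and $Y$ are non-orthogonal; then $A$ and $B$ are both non-orthogonal to $Y$, so cohesiveness of $Y$ gives that $A$ and $B$ are non-orthogonal. This is the only step that truly needs the non-orthogonality of $X$ and $Y$, and it is the main (mild) obstacle in the argument.

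Putting the three cases together yields that any two definable sets non-orthogonal to $X\times Y$ are non-orthogonal to each other, which is exactly the definition of $X\times Y$ being cohesive. No further machinery beyond Corollary~\ref{cor:both} and the definition of cohesiveness is needed; in particular we do not require Proposition~\ref{prop:int-cohesive} or any o-minimality assumption.
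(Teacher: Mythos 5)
Your proposal is correct and follows essentially the same argument as the paper: reduce via Corollary~\ref{cor:both} to non-orthogonality with $X$ or $Y$, handle the matching cases by cohesiveness of the common factor, and resolve the mixed case by chaining cohesiveness through the hypothesis that $X$ and $Y$ are non-orthogonal. The only (immaterial) difference is that in the mixed case the paper uses cohesiveness of $Y$ first to conclude $X$ is non-orthogonal to $B$ and then reduces to the matching case, whereas you first conclude $A$ is non-orthogonal to $Y$; both chains are valid.
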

\begin{pf}
	Let $A$ and $B$ be non-orthogonal to $X\times Y$. We need to prove
	that $A$ and $B$ are non-orthogonal. By Corollary~\ref{cor:both} either $X$ or $Y$ is non-orthogonal to $A$. Similarly, either $X$ or $Y$ is non-orthogonal to $B$. There are four cases to consider, but by symmetry we can consider the following two cases.
	
	Case 1. $X$ is non-orthogonal to both $A$ and $B$.
	
	Case 2. $X$ is non-orthogonal to $A$, and $Y$ is non-ortogonal to $B$.
	
	In the first case by the cohesiveness of $X$ the sets $A$ and $B$ are non-orthogonal. In the second case, since $X$ and $Y$ are non-orthogonal and $Y$ is non-orthogonal to $B$, by the cohesiveness of $Y$ we conclude that $X$ is non-orthogonal to $B$, so we have a reduction to the first case.
\end{pf}

We now turn to groups definable in o-minimal structures.

\begin{thm}\label{thm:dim1cohesive}
Let $G$ be a definable group of dimension $1$ in an o-minimal structure
$\CM$. Then $G$ is cohesive.
\end{thm}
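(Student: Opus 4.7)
The plan is to reduce non-orthogonality with $G$ to the existence of definable functions into $G$ with infinite image, and then exploit the group topology on $G$ to align such images via left translation. Suppose $X$ and $Y$ are definable sets both non-orthogonal to $G$. Since $\CM$ is o-minimal, Proposition~\ref{prop:XY-box} yields definable sets $S\subseteq X\times G$ and $T\subseteq Y\times G$ that are not finite unions of boxes, equivalently, the families of fibers $(S_x)_{x\in X}$ and $(T_y)_{y\in Y}$ each contain infinitely many distinct sets.

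The first step will be to extract from $S$ a definable partial function $f\colon X'\to G$ on some definable $X'\subseteq X$ with infinite image $A:=f(X')\subseteq G$, and similarly $g\colon Y'\to G$ with infinite image $B\subseteq G$. Since $G$ is one-dimensional, o-minimal cell decomposition and uniform finiteness give a bound $N$ on the number of intervals and points comprising each $S_x$ relative to the definable manifold topology on $G$. After partitioning $X$ into finitely many pieces where the combinatorial shape of $S_x$ is constant, the fiber $S_x$ is canonically encoded by a tuple of endpoints and coordinates in $G$, yielding a definable map $\psi\colon X'\to G^m$ whose image must be infinite; some coordinate projection then gives the desired $f$, and the same argument gives $g$.

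The second step will use the group structure. Any infinite definable subset of the one-dimensional group $G$ has dimension $1$ and therefore contains a nonempty subset open in the group topology, by cell decomposition applied in a chart. Choosing opens $U\subseteq A$ and $V\subseteq B$, continuity of multiplication and inversion makes $V\cdot U^{-1}$ open and nonempty; fix $c\in V\cdot U^{-1}$, so that $cU\cap V$ is open nonempty, hence infinite, and in particular $cA\cap B$ is infinite. Consider the definable relation
\[
R:=\{(x,y)\in X'\times Y'\colon c\cdot f(x)=g(y)\}\,.
\]
As $x$ ranges over the (infinite) subset of $X'$ on which $c\cdot f(x)\in B$, the value $c\cdot f(x)$ takes infinitely many distinct values in $cA\cap B$, and hence the fibers $R_x=g^{-1}(c\cdot f(x))$ are infinitely many distinct nonempty sets. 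Therefore $R$ is not a finite union of $(X,Y)$-boxes, so by Proposition~\ref{prop:XY-box} we conclude $X\not\perp Y$, proving that $G$ is cohesive.

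I expect the main technical nuisance to be the extraction step when $G$ is not definably compact: endpoints of intervals in $S_x$ may lie at the boundary of a chart rather than in $G$ itself. This is managed by covering $G$ with finitely many coordinate charts and applying uniform cell decomposition within each chart, so that the coding map $\psi$ remains well defined (piecewise) into $G^m$ and the argument goes through.
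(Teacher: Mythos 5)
Your proposal is correct and follows essentially the same strategy as the paper's proof: extract definable maps into $G$ with infinite image by encoding each fiber of the witnessing relation through its finitely many boundary points, translate one image so that the two images meet in an infinite set, and use the resulting equalizer relation to witness non-orthogonality of $X$ and $Y$. The only cosmetic differences are that you invoke Proposition~\ref{prop:XY-box} to work with $X\times G$ directly where the paper uses Corollary~\ref{cor:weak-ortho} and works with $A^n\times G$, and that you phrase the encoding via chart endpoints rather than the intrinsic boundary $\delta f(a)$.
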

\begin{proof}
Let $A$ and $B$ be definable sets non-orthogonal to $G$. We need to prove
that $A$ and $B$ are not orthogonal. Let us first concentrate on $A$.
By Corollary \ref{cor:weak-ortho} there are $n\in \N$ and a definable
relation~$R\subseteq A^n \times G$
which is not a finite union of boxes.
Let $\mathcal P (G)$ be the power
set of $G$ and let $f\colon A^n \to \mathcal P(G)$ be defined by
$f(a) = \{g\in G \mid (a,g) \in R\}$.
Since $f(a)$ has dimension $\leq 1$, its boundary
$\delta f(a)$, closure minus interior,
is either empty or has dimension $0$. By the assumption on $R$, the image of $f$ is infinite, thus
also the image of $\delta f\colon A^n\to \mathcal P (G)$ is infinite, since
in a group of dimension $1$ there can only be finitely many definable
subsets with a given boundary.  Now recall that a set of dimension zero is
finite and a definable family of finite sets is uniformly finite. So there
is $k\in \N$ such that $\delta f(a)$ has at most $k$ elements for every $a\in
A^n$. By ordering the points of $\delta f(a)$ lexicographically,
we have a map $h\colon A^n \to G^k$ with infinite image. It follows that there is $i\leq
k$ such that $\pi_i \circ h\colon A^n \to G$ has infinite image, where
$\pi_i\colon G^k\to G$ is the projection onto the $i$-th component.  We have thus
proved that there is a definable map $f_A\colon A^n \to G$ with infinite image.
Similarly, there is $l\in \N$ and a definable map $f_B\colon B^l\to G$ with
infinite image. Infinite definable subsets of a one-dimensional group have
non-empty interior, thus, composing with a group translation, we can assume that
the two images have an infinite intersection. The relation $xQy :\iff
f_A(x) = f_B(y)$ then witnesses the non-orthogonality of $A$ and $B$.
\end{proof}

\begin{rem}
Notice that, by Theorem~\ref{thm:dim1cohesive}, any o-minimal
expansion~$\CM$ of
a group is cohesive, hence all sets definable in~$\CM$ are cohesive
by~\ref{prop:int-cohesive}. For Theorems~\ref{thm:ominimal}
and~\ref{thm:main2} it is, therefore,
important to work in an arbitrary o-minimal structure.
\end{rem}

\section{Splitting and decomposition}\label{sec:splitting}
In this section we work in an arbitrary structure \cal M. Fix definable orthogonal sets $X_1, \ldots, X_n$. We introduce the notions of splitting and decomposition for definable functions and groups, respectively,  contained in products of the sets $X_i$, as follows. % in the disjoint union of the sets $X_i$.

\newcommand{\dd}{\textup{Def}(\Pi_i X_i)}

\begin{defn} We let $\dd$ be the collection of all definable sets in $\CM$ that are contained in some cartesian product~$\prod_{j=1}^k X_{t(j)}$ with $t(j) \in
	\{1,\dots, n\}$, for all $j=1,\dots, k$.
\end{defn}

%\begin{defn}\label{def:disjoint-union}
%	The \textbf{disjoint union} $\bigsqcup_i X_i$ of $X_1, \ldots, X_n$ is the multi-sorted structure having $X_1,\dotsc,X_n$ as sorts and all $(X_1^{k_1},\dotsc,X_n^{k_n})$-boxes as basic relations, where $k_1, \ldots, k_n \in \N$.
%\end{defn}

%\begin{rem}
%	Thanks to the orthogonality of $X_1, \ldots, X_n$, a definable set $S$ belongs to $\text{Def}(\Pi_i X_i)$ if and only if it is it is a finite union of boxes contained in some cartesian product~$\prod_{j=1}^k X_{t(j)}$ with $t(j) \in	\{1,\dots, n\}$ for all $j=1,\dots, k$.
%\end{rem}

\begin{notation}
	If $S \subseteq \prod_{j=1}^k X_{t(j)}$
	we define
	$$\pi_i: S \to X_i^{k_i}$$ as the projection of $S$ onto the
	$X_i$-components, where $k_i$ is the number of indexes $j$ with $t(j)
	=i$.
	For instance, if $S \subseteq X_1\times X_2 \times X_1$, then $\pi_1:S \to X_1^2$ maps $(a,b,c)$ to $(a,c)$.
\end{notation}

We now introduce the notion of splitting, which will be a crucial tool for our proofs.

	\begin{defn} Let $f:A\to B$ be a function  in $\dd$. We say that $f$ \textbf{splits} (with regard to $X_1, \dots, X_n$)  if for every $x, y\in \dom(f)$ and every $i\leq n$,
$$\pi_i(x)=\pi_i(y) \,\,\,\Rarr\,\,\,\, \pi_i(f(x))=\pi_i (f(y)).$$
That is, $\pi_i (f(x))$ depends only on $\pi_i(x)$. %We simply say that \textbf{$f$ splits} if $X_1, \dots, X_n$ are clear from the context.
Note that if $f$ splits, then up to a permutation of the indexes we can write $f = f_1\times \dots \times f_n |A$ where $f_i: \pi_i(A)\to \pi_i(B)$.
\end{defn}
We will be using the following two facts without specific mentioning.

\begin{fact}\label{fact:splits} Let $f:A\to B$ be a function in $\dd$. Then there is a finite partition ${\mathcal D}$ of $\dom(f)$ into definable sets, such that for each $D\in \mathcal D$, the restriction of $f$ to $D$ splits.
\end{fact}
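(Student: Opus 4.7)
My plan is to handle one index $i \leq n$ at a time, obtaining a finite definable partition $\mathcal{D}_i$ of $A$ on each piece of which $\pi_i \circ f$ factors through $\pi_i$, and then to take the common refinement of the $\mathcal{D}_i$'s.

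Fix $i$, and let $m_i$ denote the number of $X_i$-factors in the ambient product of $B$. Consider the graph $\Gamma_i \subseteq A \times X_i^{m_i}$ of $\pi_i \circ f \colon A \to X_i^{m_i}$; after rearranging factors, $\Gamma_i$ sits in $X_i^{k_i + m_i} \times \prod_{j \neq i} X_j^{k_j}$. Orthogonality of $X_1, \dotsc, X_n$ (Corollary \ref{cor:both} applied inductively to Definition \ref{def:orth}) makes $\Gamma_i$ a finite union of boxes in this product. Refining by Boolean atoms in each non-$i$ factor $X_j^{k_j}$, one may arrange
\[
	\Gamma_i \;=\; \bigsqcup_{l=1}^{r_i} \, U^{(i)}_l \times \prod_{j \neq i} V^{(i)}_{j,l}, \qquad U^{(i)}_l \subseteq X_i^{k_i+m_i},\ V^{(i)}_{j,l} \subseteq X_j^{k_j},
\]
with the products $\prod_{j \neq i} V^{(i)}_{j,l}$ pairwise disjoint as $l$ varies. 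Then the map $l_i$ sending $x \in A$ to the unique $l$ with $(\pi_j(x))_{j \neq i} \in \prod_{j \neq i} V^{(i)}_{j,l}$ is a definable function of $(\pi_j(x))_{j \neq i}$ alone, and its fibers form a finite definable partition $\mathcal{D}_i$ of $A$.

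Fix $D \in \mathcal{D}_i$ with $l_i \equiv l$ on $D$, and for $x \in D$ write $a = \pi_i(x)$ and $\bar{b} = (\pi_j(x))_{j \neq i}$. From $(x, \pi_i(f(x))) \in \Gamma_i$ and $\bar{b} \in \prod_{j \neq i} V^{(i)}_{j,l}$ we obtain $(a, \pi_i(f(x))) \in U^{(i)}_l$. Moreover, if $(a, y_1), (a, y_2) \in U^{(i)}_l$ with $y_1 \neq y_2$, then $(a, \bar{b}, y_1)$ and $(a, \bar{b}, y_2)$ would both lie in the box $U^{(i)}_l \times \prod_{j \neq i} V^{(i)}_{j,l} \subseteq \Gamma_i$, contradicting that $\pi_i \circ f$ is a function. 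Hence $\pi_i(f(x))$ depends only on $a = \pi_i(x)$ on $D$.

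Taking $\mathcal{D}$ to be the common refinement of $\mathcal{D}_1, \dotsc, \mathcal{D}_n$ yields a finite definable partition of $A$ on each piece of which $f$ splits. The main step requiring care is the preliminary refinement of the box decomposition so that $l_i$ becomes a function of $(\pi_j(x))_{j \neq i}$ alone; once that is set up, the graph property of $\pi_i \circ f$ automatically localises its dependence to $\pi_i$ on each piece.
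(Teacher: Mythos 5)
Your argument is correct and takes essentially the same route as the paper's: orthogonality decomposes the graph into finitely many disjoint boxes, and the fact that the graph is the graph of a function forces each output block to depend only on the corresponding input block on each box. The only difference is bookkeeping --- the paper applies the box decomposition once to the full graph of $f$ and partitions $\dom(f)$ by the domain projections of the boxes, whereas you treat each $\pi_i\circ f$ separately and take a common refinement; the verification you spell out is exactly what the paper leaves implicit in ``each $f_j$ splits''.
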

\begin{proof} By orthogonality of $X_1,\ldots, X_n$, up to a permutation of the variables, $f$ is a finite disjoint union $\bigcup_j f_j$, where $f_j = U_{1,j} \times \dots \times U_{n,j}$ and $U_{i,j} \subseteq \pi_i(A) \times \pi_i(B)$. We conclude observing that each $f_j$ splits.
\end{proof}

Splitting is preserved under composition in the following sense.

\begin{fact}
Let $h, f_1, \dots, f_n$ be  maps in $\dd$, and suppose that the map  $h \circ (f_1\times \dots\times f_n)$ is defined (in the sense that the range of $f_1\times \ldots \times f_n$ is contained in the domain of $h$). If $h,f_1,\ldots, f_n$ split, so does $h \circ (f_1\times \dots\times f_n)$.
\end{fact}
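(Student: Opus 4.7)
The plan is to chase the definition directly: reduce the splitting of the composition to the splitting of its constituents, with essentially all the work lying in keeping track of how the projection $\pi_i$ interacts with the product decomposition of the domain.

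Set $g = f_1\times\dots\times f_n$ and $F = h\circ g$. Fix $i\leq n$ and take $x,y\in \dom(F) = \dom(g)$ with $\pi_i(x) = \pi_i(y)$. Write $x = (x_1,\dots,x_n)$ and $y = (y_1,\dots,y_n)$ with $x_j,y_j\in\dom(f_j)$. The key bookkeeping observation is that, after a harmless permutation of coordinates, $\pi_i$ applied to a tuple in $\prod_j \dom(f_j)$ merely collects the $X_i$-projections coming from each factor; consequently, $\pi_i(x) = \pi_i(y)$ is equivalent to $\pi_i(x_j) = \pi_i(y_j)$ for every $j=1,\dots,n$. Applying the splitting of each $f_j$ then yields $\pi_i(f_j(x_j)) = \pi_i(f_j(y_j))$, and reassembling these equalities via the inverse permutation gives $\pi_i(g(x)) = \pi_i(g(y))$. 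Finally, since $g(x),g(y)\in\dom(h)$ and $h$ splits, we conclude
\[
\pi_i(F(x)) \;=\; \pi_i(h(g(x))) \;=\; \pi_i(h(g(y))) \;=\; \pi_i(F(y)).
\]
As $i$ was arbitrary, $F$ splits.

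There is no real obstacle beyond this notational care: the argument is a purely formal commutation between $\pi_i$, which acts by selecting $X_i$-coordinates and hence commutes with any product decomposition, and $f_1\times\dots\times f_n$, which acts coordinatewise on the factors of that decomposition. The final application of the splitting of $h$ then closes the chain.
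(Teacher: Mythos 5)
Your proof is correct: it is exactly the direct definition-chase that the paper dismisses with the single word ``Straightforward,'' and your bookkeeping observation that $\pi_i(x)=\pi_i(y)$ holds iff $\pi_i(x_j)=\pi_i(y_j)$ for all $j$ is precisely the point that makes the chain $\pi_i(x)=\pi_i(y)\Rightarrow\pi_i(g(x))=\pi_i(g(y))\Rightarrow\pi_i(F(x))=\pi_i(F(y))$ go through.
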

\begin{proof}
Straightforward.
\end{proof}

We now turn to definable groups.

\begin{defn}
	A definable group $G$ \textbf{admits a decomposition} (with respect to $X_1,\ldots, X_n$) if there are definable subsets $A_1,\ldots, A_n$ of $G$, such that $G = A_1\dots A_n$, and $A_i$ is internal to $X_i$, for all $i$.
\end{defn}

Note that, since a set internal to a cohesive set is cohesive (Proposition \ref{prop:int-cohesive}(1)), a definable group admits an orthogonal decomposition (as in the introduction) if and only if there are definable orthogonal cohesive sets $X_1, \dots, X_n$, such that $G$ admits a decomposition with respect to them.

\begin{observation}\label{obs:finite-index}
Let $H$ be a finite index subgroup of~$G$. If $H$ admits a
decomposition with respect to $X_1,\ldots,X_n$, then so does $G$.
\end{observation}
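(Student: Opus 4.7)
The strategy is to absorb the finitely many coset representatives of $H$ into the first factor of the given decomposition of $H$. I will fix representatives $g_1,\ldots, g_m\in G$ with $G = \bigcup_{j=1}^m g_j H$, and, using $H = A_1 A_2\cdots A_n$, rewrite $G = \bigcup_{j=1}^m (g_j A_1)\cdot A_2\cdots A_n = B_1\cdot A_2\cdots A_n$, where $B_1 := \bigcup_{j=1}^m g_j A_1$. Setting $B_i := A_i$ for $i\geq 2$, this yields $G = B_1 B_2\cdots B_n$, and each $B_i$ with $i\geq 2$ is internal to $X_i$ by hypothesis.

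Hence it only remains to verify that $B_1$ is internal to $X_1$. Each left-translate $g_j A_1$ is $X_1$-internal, because composing a given definable surjection $f: X_1^k \to A_1$ with left multiplication by $g_j$ in $G$ produces a definable surjection $X_1^k \to g_j A_1$. So $B_1$ is a finite union of $X_1$-internal sets, and I will invoke the mild auxiliary fact that a finite union of sets internal to a definable set $X$ is itself $X$-internal, provided $|X|\geq 2$. To prove that auxiliary fact, I would pad the given surjections $f_j : X_1^{k_j}\to g_j A_1$ up to a common arity $k$ (by having them ignore extra coordinates), choose $l$ large enough that $X_1^l$ admits a partition into $m$ non-empty definable pieces $U_1,\ldots, U_m$, and then assemble the single definable surjection $F: X_1^{k+l} \to B_1$ defined by $F(y,x) = f_j(x)$ whenever $y\in U_j$.

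The only genuinely substantive point is this auxiliary fact on unions of internal sets, and even that is entirely routine once $|X_1|\geq 2$. The degenerate case $|X_1|=1$ forces $A_1$ to be a singleton and does not arise in the intended applications, where the $X_i$ are infinite; modulo that harmless caveat the observation will follow immediately from the construction above.
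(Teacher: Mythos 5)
Your argument is correct and is surely the intended one: the paper states this as an Observation with no accompanying proof, and absorbing the finitely many coset representatives $g_1,\ldots,g_m$ into one factor of the decomposition of $H$ is the evident route. The only substantive point is exactly the auxiliary fact you isolate (a finite union of $X_1$-internal sets is $X_1$-internal once $|X_1|\geq 2$), your padding-and-partition construction does prove it, and the degenerate case $|X_1|=1$ that you flag never occurs in the paper's applications of the Observation (where one can always absorb the cosets into a factor $X_i$ with at least two elements), so the caveat is harmless.
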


Our goal in the next sections is to prove that if $G$ is definable in an o-minimal structure
and internal to $X_1\times\ldots\times X_n$, then it admits a decomposition in
the   sense above. A relevant case is when $G$ is contained in -- as opposed to being
internal to -- $X_1\times\ldots\times X_n$. In this situation, by
Remark \ref{rem:splits} below, if the group operation of~$G$ splits, then $G$  admits a decomposition. The converse, however, is not true (Example \ref{exa:splits}).
% : below, we will give an example in which the group is decomposable, but the group operation does not split.
On the other hand, if $G$ is a direct product of groups definable in $X_1,\ldots,X_n$, respectively, then the group operation obviously splits.
In the appendix, we will give an example in which the group operation
splits, yet the group is not even definably isomorphic to a direct
product.

  The rest of this section contains some remarks that help to demonstrate the newly defined notions. %We start with an easy observation.

\begin{rem}\label{rem:splits}
Let $(G,\mu,e)$ be a definable group with $G\subseteq X_1\times\ldots\times X_n$. Then the following are equivalent:
\begin{enumerate}
\item the group operation~$\mu$ splits (with respect to $X_1,\ldots,X_n$);
\item there are definable groups $H_1,\ldots,H_n$, contained in
$X_1,\ldots,X_n$, respectively, such that $G<H_1\times\ldots\times H_n$;
\item there are definable groups $H_1,\ldots,H_n$, contained in
$X_1,\ldots,X_n$, respectively, such that $G$ is a finite index subgroup
of~$H_1\times\ldots\times H_n$;
\end{enumerate}
and, if either of these holds, then $G$ admits a decomposition.
\end{rem}
\begin{proof}
Assume $(1)$. Consider the projections~$H_1,\ldots,H_n$ of~$G$
on~$X_1,\ldots,X_n$, respectively. The group operation~$\mu$, by the
splitting hypothesis, takes the form
\[\mu((x_1,\ldots,x_n),(y_1,\ldots,y_n)) =
(\mu_1(x_1,y_1),\ldots,\mu_n(x_n,y_n))\]
It is straightforward to check that $\mu_i$ is a group operation on~$H_i$, and
$(2)$ is thus established.

Now assume (2). Let $\pi_i: \prod_i X_i \to X_i$ be the projection onto the $i$-th component. Replacing $H_i$ with $\pi_i(G)$ we can assume, without loss of generality, that $\pi_i(G) = H_i$ for each $i$.
We prove that $G$ has finite index in~$H_1\times\ldots\times H_n$.
Let $$p_i:\prod_j X_j \to \prod_{j\ne i} X_j$$ be the projection that
omits the $i$-th coordinate.
Fix an index~$i$ and consider an element $k$ of~$\prod_{j\neq i} H_j$. Let
$H_i(k) = \pi_i(G\cap p_i^{-1}(k))$.
Observe that $L_i:=H_i(p_i(e))$ is a subgroup of~$H_i$. We claim that
$L:=L_1\times\ldots\times L_n$ has finite index in~$H_1\times\ldots\times
H_n$. Assuming the claim, since $L<G$, also $G$ must have finite index.
It suffices to prove that $L_i$ has finite index in~$H_i$. The coset
$h L_i$, for $h\in H_i$, coincides with~$H_i(p_i(g))$ for any
$g\in G\cap\pi_i^{-1}(h)$. Thus, in particular, the cosets of~$L_i$ belong
to the family~$H_i(-)$ indexed over~$\prod_{j\ne i} X_j$. This family is
finite by Fact~\ref{fact:family}, and this concludes the proof of~$(3)$.

It is immediate that $(2)$, hence also $(3)$, implies~$(1)$. It remains to
show that $G$ admits a decomposition. To this aim, observe that $L$ is
decomposable: it is, in fact, the product of the subgroups~$G\cap
p_i^{-1}(p_i(e)) \cong L_i$. We conclude by Observation~\ref{obs:finite-index}.
\end{proof}

\begin{example}\label{exa:splits}
We give an example of a group whose operation does not split, even up to
definable isomorphism, but the group admits a decomposition. Let $\mathcal M = \mathcal R_1 \sqcup \mathcal R_2$ where $\mathcal R_1$ and $\mathcal R_2$ are isomorphic copies of $(\R, <, +)$ and note that their domains $R_1$ and $R_2$ are orthogonal in $\mathcal M$. 
Let $(R_1\times R_2, +,0)$ be the product group. 
Fix $a\in [0,1)$. Consider the lattice $\Lambda= \Z\cdot(0, 1)
+ \Z\cdot(1, a)\subseteq R_1 \times R_2$ and the definable set $[0,1)^2 \subseteq R_1\times R_2$. Define $G=([0,1)^2, \mu, 0)$, where $\mu = + \mod
\Lambda$. Clearly, $G=A_1+A_2$, where $A_1=[0, 1)\times \{0\}$ and
$A_2=\{0\}\times [0,1)$, and hence it admits a decomposition. It is easy to see that if $a\neq0$, then $\mu$ does
not split. Moreover, $a\in \Q$, if and only if $G$ is definably isomorphic
to a group whose operation splits, if and only if $G$ is definably
isomorphic to a direct product of groups definable in~$R_1$ and~$R_2$,
respectively.
\end{example}

\section{Compact domination}\label{sec:cdom}Let again $\CM$ be an arbitrary structure and  $G$ a definable group.
We call a definable set $S\sub G$ {\em left-generic} ({\em right-generic}) if finitely many left-translates (respectively right-translates) of $S$ cover $G$. We call $S$ {\em generic} if it both left-generic and right-generic (which are equivalent when $G$ has {\em finitely satisfiable generics}
({\em fsg})~\cite[Proposition~4.2]{Hrushovski2007a}).
%If $G$ is abelian we write ``generic'' for ``left-generic''.
% (since we assume that $G$ is abelian, we do not need to distinguish between left-generic and right-generic).
We are mainly interested in o-minimal structures, but in this section we consider the larger NIP class.  We recall that NIP structures include both the o-minimal structures (e.g. the real field) and the stable structures (e.g. the complex field). If $\CM$ is a NIP structure and $G$ is {\em compactly dominated} (see \cite{Simon2015} for the definitions), then $G$ has {\em fsg}
\cite[Theorem~8.37]{Simon2015}, \cite[Proposition~3.23]{Starchenko2016}, \cite[Proposition~8.33]{Simon2015}. Compact domination is a model-theoretic form of compactness: for instance a semialgebraic linear group $G<GL(n,\R)$ is compactly dominated if and only if it is compact. The following proposition subsumes all we need about these notions.
\begin{prop}\label{prop:CDgenerics}
	Let $\CM$ be a NIP structure and  $G$  a compactly dominated group definable in $\CM$. Then:
	\begin{enumerate}
		\item If the union of two definable subsets of $G$ is generic, then one of the two is generic.
		\item Suppose $G = HK$, where $H$ and $K$ are definable subgroups and $K$ is normal. Then $S\subseteq G$ is generic if and only if it contains  a set of the form $AB$ where $A$ is a generic subset of $H$ and $B$ is a generic subset of $K$. \footnote{Below we only need the case when $G$ is the direct product of $H$ and $K$.}
		\item If $G$ and $H$ are compactly dominated, then $G\times H$ is compactly dominated.
	\end{enumerate}
\end{prop}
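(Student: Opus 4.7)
The plan is to organize the argument around the unique left-invariant Keisler measure~$\mu_G$ attached to a compactly dominated NIP group~$G$: a definable $S\subseteq G$ is generic if and only if $\mu_G(S)>0$, and $\mu_G$ is finitely additive on the boolean algebra of definable subsets of~$G$. Parts~(1) and~(3) will then follow cleanly from this dictionary together with Fubini, while part~(2) is the main technical point.

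For part~(1), if $\mu_G(S_1\cup S_2)>0$, additivity forces $\mu_G(S_i)>0$ for some~$i$, hence that $S_i$ is generic. A more purely model-theoretic route: in the fsg setting genericity of a definable~$T$ is equivalent to $T$ lying in some global type finitely satisfiable in a small model, and such types are complete, so $S_1\cup S_2$ in the type forces some $S_i$ in it.

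For part~(3), I would use the standard identification $(G\times H)^{00}=G^{00}\times H^{00}$ available in the definably amenable NIP setting, so that the compact quotient of $G\times H$ identifies with $G/G^{00}\times H/H^{00}$ carrying the product Haar measure; Fubini applied to the Haar-null boundary sets supplied by the compact domination of each factor then yields compact domination of the product.

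Part~(2) is where I expect the main obstacle to be, and I would specialize to the direct-product case $G=H\times K$ allowed by the footnote. The $(\Leftarrow)$ direction is routine: generic $A\subseteq H$ and $B\subseteq K$ are each covered by finitely many translates, so $A\times B$ is covered by finitely many translates in~$G$, hence is generic, and any superset of a generic set is generic. For $(\Rightarrow)$, assume $\mu_G(S)>0$. Part~(3) identifies $\mu_G$ with $\mu_H\times\mu_K$, and Fubini yields an $\varepsilon>0$ and a set of $h\in H$ of positive $\mu_H$-measure on which $\mu_K(S_h)>\varepsilon$, where $S_h=\{k:(h,k)\in S\}$. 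Pushing~$S$ through the compact quotient $\pi\colon G\to G/G^{00}$ gives a Borel set of positive product-Haar measure, which in a product of compact topological groups must contain a measurable rectangle $U\times V$ of positive product measure (for instance by Lebesgue density). Pulling $U$ and~$V$ back through~$\pi$ and refining definably inside yields definable generic $A\subseteq H$ and $B\subseteq K$ with $A\times B$ contained in~$S$ up to a Haar-null, hence non-generic, set. The delicate step will be to remove this null remainder while keeping $A$ and~$B$ definable and generic, which I expect to achieve via the standard fact that in a compactly dominated group every generic set admits a generic definable subset disjoint from any prescribed non-generic definable set.
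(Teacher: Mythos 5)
Parts (1) and (3) of your plan are essentially sound. For (1), both of your routes work (non-generic sets form an ideal in any fsg group, which is what the paper cites). For (3), the paper takes a different and cleaner path than your Fubini argument: it uses that compact domination is equivalent to the existence of a smooth left-invariant measure and that the product of smooth measures is smooth; your direct approach via $(G\times H)^{00}=G^{00}\times H^{00}$ and null boundary sets is plausible but glosses over a real difficulty (a fiber $gG^{00}\times hH^{00}$ can meet both $Z$ and its complement even when every individual slice $Z_h\cap gG^{00}$ is pure, so naive slicewise Fubini does not immediately give the two-variable domination statement).

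Part (2) is where your argument breaks, and the break is not the ``delicate step'' you flag but the step before it: it is \emph{false} that a Borel set of positive Haar measure in a product of compact groups must contain a measurable rectangle $U\times V$ of positive product measure. For instance, the complement in $(\R/\Z)^2$ of $\{(x,y): x-y\in\Q/\Z\}$ has full measure, yet by the Steinhaus theorem $A-B$ contains an interval whenever $A,B$ have positive measure, so this set contains no positive-measure rectangle at all; Lebesgue density gives you a square that is \emph{mostly} filled, never a genuine product set. Your fallback of absorbing a null remainder also fails, because removing a non-generic set from $A\times B$ destroys the rectangle structure, which is exactly what you need to preserve. The reason the proposition is nevertheless true is that genericity in a compactly dominated group is a much more rigid condition than positive measure: a definable $S$ is generic if and only if some translate of $S$ \emph{contains the entire subgroup} $G^{00}$. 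This is the paper's route: from $gG^{00}\subseteq S$ and $(HK)^{00}=H^{00}K^{00}$ (which holds for $H<G$, $K\lhd G$, not only for direct products) one gets $(kK^{00})(hH^{00})\subseteq S$, and then saturation/compactness replaces $K^{00}$ and $H^{00}$ by definable sets $U\subseteq K$, $V\subseteq H$ containing them, which are automatically generic. Note also that you restrict to the direct-product case, whereas the statement and the paper's proof cover general $G=HK$ with $K$ normal; the footnote only says the special case suffices for the later applications, not for the proposition as stated.
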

\begin{pf} Point (1) holds for all groups with
{\it fsg}~\cite[Proposition~4.2]{Hrushovski2007a}.

	To prove point (2) we may assume $\CM$ is $\kappa$-saturated for some sufficiently big cardinal $\kappa$. We make use of the {\em infinitesimal subgroup} $G^{00}$ (see \cite{Simon2015} for the definition). Specifically, we need to observe that in a compactly dominated group $G$, a subset $S$ is generic if and only some translate of $S$ contains $G^{00}$ \cite[Proposition 2.1]{Berarducci2009a}. If $G = HK$ with $H<G$ and $K\lhd G$, then $G^{00} = (HK)^{00} = H^{00}K^{00}$ \cite[Theorem 4.2.5]{CONVERSANO2009} (this holds without the hypothesis that $G$ is compactly dominated). Now suppose $S\subseteq G$ is generic. So there is $g\in G$ such that $gG^{00} \subseteq S$. Now, $K^{00}$ is a normal subgroup of $G$ (being a definably characteristic subgroup of $K$), and so, writing $g=kh$ for $h\in H$ and $k\in K$, we get $gG^{00} = kh K^{00}H^{00} =  (kK^{00})(hH^{00}) \subseteq S$.
	By $\kappa$-saturation there are definable sets $U,V$ with $K^{00}\subseteq U \subseteq K$ and $H^{00}\subseteq V \subseteq H$ such that $(kU)(hV) \subseteq S$.
%	From the definition of infinitesimal subgroup it follows that we can write $H^{00} = \bigcap_{i\in I}  U_i$ and $K^{00} = \bigcap_{j\in J} V_j$ for some downward directed families of definable generic sets $(U_i)_{i\in I}$ and $(V_j)_{j\in J}$ with $|I|<\kappa$ and $|J|<\kappa$. By $\kappa$-saturation
%	$$\bigcap_{ij} (U_i V_j) \subseteq \bigcap_i U_i \cdot  \bigcap_j V_j = G^{00}$$ (given $s\in \bigcap_{ij} U_i V_j$, consider the type $p(x,y)$ that says $x\in \bigcap_i U_i$, $y \in \bigcap_j V_j$ and $s = xy$). Again by compactness, there are $i,j$ with $U_i V_j \subseteq gS$. Let $U = k^{-1}U_i$ and $V = h^{-1}V_j$. Then
%	$K^{00} \subseteq U \subseteq K$ and $H^{00} \subseteq V \subseteq H$ with $U,V$ definable so that $(kU)(hV ) \subseteq S$.
	By construction, $kU$ and $hV$ are generic in $K$ and $H$, respectively.
	
%Then a translate $X = hS$ of $S$ contains $G^{00}$.  We can thus write $S$ as the product of the generic sets $A = h^{-1}U_i$ and $B=V_j$.
	
	Point (3) follows from \cite[Corollary 3.17]{Starchenko2016} the product of smooth measures is smooth) and the fact that a group is compactly dominated if and only if it has a smooth left-invariant measure \cite[Theorem 8.37]{Simon2015}.
\end{pf}

\begin{fact}
 Definably compact groups in an o-minimal structure are compactly dominated.
\end{fact}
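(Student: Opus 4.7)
The plan is to assemble this statement from the deep machinery of groups definable in o-minimal NIP structures, rather than attempt a self-contained proof. The result is the content of the \emph{Compact Domination Conjecture} for o-minimal structures, and its verification goes through the solution of Pillay's Conjecture and the theory of Keisler measures in NIP.

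First, I would invoke Pillay's Conjecture (cf.\ \cite{Hrushovski2007a}): for a definably compact definable group $G$ in an o-minimal structure $\CM$, the type-definable subgroup $G^{00}$ of bounded index exists, and the quotient $G/G^{00}$, equipped with the logic topology, is a compact real Lie group whose dimension agrees with the o-minimal dimension of $G$. Second, definably compact groups in o-minimal structures have \emph{fsg}, so they carry a unique left-invariant Keisler measure $\mu$, which pushes forward along the canonical projection $\pi\colon G\to G/G^{00}$ to the normalized Haar measure $h$ on $G/G^{00}$.

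The heart of the argument is then to verify the domination statement itself: for every definable $X\subseteq G$, the boundary set
\[
E(X) \;=\; \bigl\{\,y\in G/G^{00} \;\big|\; \pi^{-1}(y)\cap X\neq\emptyset \;\text{and}\; \pi^{-1}(y)\cap (G\setminus X)\neq\emptyset\,\bigr\}
\]
has $h$-measure zero. The plan here is to use o-minimal cell decomposition to reduce the verification to the case where $X$ is a cell, and then exploit that $G^{00}$ is ``infinitesimally small'' relative to the Lie group $G/G^{00}$: fibers $\pi^{-1}(y)$ that straddle the boundary of a cell must project into the image of a strictly lower-dimensional definable set, which has $h$-measure zero by the dimension equality from Pillay's Conjecture. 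Combined with point~(3) of Proposition~\ref{prop:CDgenerics} and standard properties of $\mu$, this yields compact domination in the sense required by Section~\ref{sec:cdom}.

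The main obstacle is the third step, which is a substantive theorem established in the work surrounding \cite{Hrushovski2007a}; it is genuinely where the o-minimal geometry intervenes, since the mere NIP hypothesis plus fsg plus definable compactness does not suffice to recover compact domination in an arbitrary setting. For the purposes of the present paper, I would simply cite this body of results — it is declared here as a ``fact'' precisely because a direct proof would recapitulate a large portion of the o-minimal NIP literature.
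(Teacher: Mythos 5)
Your proposal is a legitimate citation-level argument, but it follows a genuinely different route from the paper, and the difference matters for the generality required here. You propose to re-run the original proof of the Compact Domination Conjecture: Pillay's Conjecture gives $G/G^{00}$ as a compact Lie group of the right dimension, \emph{fsg} gives the unique left-invariant Keisler measure pushing forward to Haar measure, and then one verifies directly that the $\pi$-preimages straddling the boundary of a definable set form an $h$-null set, via cell decomposition and dimension counting. That is indeed how the result was first established, but as the paper itself notes, that body of work (\cite{Hrushovski2011a} and the papers around \cite{Hrushovski2007a}) proves compact domination for definably compact groups in o-minimal \emph{expansions of a field} (or at least of an ordered group); the dimension-theoretic and torsion-point arguments there lean on that extra structure. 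The paper needs the Fact for an \emph{arbitrary} o-minimal structure $\CM$ --- this is stressed in the introduction, since the intended $\CM$ is a concatenation of o-minimal structures that carries no global group operation --- and so it deliberately takes a different chain: definably compact implies \emph{fsg} \cite[Theorem 8.6]{Peterzil2007}, \emph{fsg} yields a generically stable left-invariant measure, o-minimal structures are distal so generically stable measures are smooth, and a NIP group with a smooth left-invariant measure is compactly dominated \cite[Theorem 8.37]{Simon2015}. Your approach buys a more geometric and self-explanatory picture of what domination says; the paper's approach buys the full generality that the rest of the argument actually uses. As written, your third step is the gap: you would need to either check that the boundary-measure-zero argument survives without a field (not obvious, and not what the literature you cite states), or replace it with the smooth-measure argument.
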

\begin{pf} This was first proved for o-minimal expansions of a field in \cite{Hrushovski2011a}. We give some bibliographical pointers to obtain the result for arbitrary o-minimal structures. First one shows that a definably compact group in an o-minimal structures has {\it fsg} \cite[Theorem 8.6]{Peterzil2007}. From this one deduces that $G$  admits a {\em generically stable left-invariant measure} \cite[Proposition 8.32]{Simon2015}. In an o-minimal structure (and more generally in a {\em distal} structure), a generically stable measure is {\em smooth} \cite[Proposition 9.26]{Simon2015}. % \cite[Theorem 1.1]{Simon2013a}.
Finally, a NIP group with a smooth left-invariant measure is compactly dominated \cite[Theorem 8.37]{Simon2015}.
\end{pf}

\begin{prop}\label{prop:fsgsplits} Let $G$ be a compactly dominated group. 	Given two generic sets $A\subseteq G$ and $B\subseteq G$, there is $h\in G$ such that $A\cap hB$ is generic.
\end{prop}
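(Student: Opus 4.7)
The plan is to exploit point~(1) of Proposition~\ref{prop:CDgenerics} together with the very definition of genericity. Since $B$ is (left-)generic in $G$, there exist $g_1,\dots,g_k\in G$ with $G=g_1B\cup\dots\cup g_kB$. Intersecting with $A$ gives
\[A \;=\; \bigcup_{i=1}^{k}\bigl(A\cap g_iB\bigr).\]

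Now I would invoke Proposition~\ref{prop:CDgenerics}(1) iteratively. That proposition says that if a union of two definable subsets of $G$ is generic, then so is one of the two; by an easy induction on the number of terms, the same holds for any finite union. Applied to the display above, since $A$ itself is generic, at least one of the sets $A\cap g_iB$ must be generic. Taking $h=g_i$ for such an index gives the desired element.

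The only point requiring a small comment is the equivalence of left-generic, right-generic and generic: the paper defines a set to be generic when it is both left- and right-generic, but in the present setting $G$ is compactly dominated hence has \textit{fsg}, so the two one-sided notions coincide with the two-sided one. Thus covering $G$ by finitely many left-translates of $B$ is legitimate, and the set $A\cap g_iB$ produced at the end, being left-generic, is automatically generic in the sense of the paper. There is essentially no obstacle here; the argument is a direct application of finite additivity of genericity in fsg groups.
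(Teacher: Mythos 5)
Your argument is correct and is essentially identical to the paper's own proof: cover $G$ (hence $A$) by finitely many left-translates of $B$, then apply Proposition~\ref{prop:CDgenerics}(1) iteratively to the resulting finite union to extract a generic piece $A\cap hB$. The side remark on the equivalence of left- and right-genericity in the fsg setting is consistent with the paper's conventions and harmless.
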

\begin{pf}
	There is a finite subset $I\subseteq G$ such that $A \subseteq \bigcup_{h\in I} hB$. By Proposition \ref{prop:CDgenerics}(1), there is $h\in I$ such that $A \cap hB$ is generic.
\end{pf}

\section{Decomposition of compactly dominated abelian groups
(NIP)}\label{sec:NIP}
In this section $\CM$ is a NIP structure and $X_1,\dots, X_n$ are orthogonal definable sets.

\begin{thm} \label{thm:NIP} Let $G$ be a compactly dominated abelian group contained in $X_1 \times \dots \times X_n$. Then $G$ admits a decomposition with respect to $X_1, \dots, X_n$.
\end{thm}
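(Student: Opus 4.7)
My plan is to construct the decomposition directly, without induction, using the box structure coming from orthogonality together with the splitting of the group operation on a generic piece of $G\times G$.

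First, by orthogonality (extending Corollary~\ref{cor:both} to $n$ sets), $G$ is a finite union of $(X_1,\ldots,X_n)$-boxes, and iterated application of Proposition~\ref{prop:CDgenerics}(1) then produces a generic box $V = V_1\times\cdots\times V_n\subseteq G$. Applying Fact~\ref{fact:splits} to the group operation $\mu\colon G\times G\to G$ (viewed as a function in $\dd$), and noting that $G\times G$ is compactly dominated by Proposition~\ref{prop:CDgenerics}(3), a second application of Proposition~\ref{prop:CDgenerics}(1) yields a generic piece $D\subseteq G\times G$ on which $\mu$ splits as $\mu(g,h) = (\mu_1(\pi_1 g,\pi_1 h),\ldots,\mu_n(\pi_n g,\pi_n h))$. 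Choose a sufficiently generic $v^* = (v_1^*,\ldots,v_n^*)\in V$ and, for each $i$, define the slice $A_i^{(0)} := \{(v_1^*,\ldots,v_{i-1}^*,x,v_{i+1}^*,\ldots,v_n^*) : x\in V_i\}\subseteq V$, which is internal to $X_i$. The splitting then gives, for every $g\in V$, the identity
\[ g \;=\; a_1^{(g)} + a_2^{(g)} + \cdots + a_n^{(g)} \;-\; (n-1)v^*, \]
where $a_i^{(g)}\in A_i^{(0)}$ is the slice element whose $i$-th coordinate equals $\pi_i(g)$.

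Since $V$ is generic in $G$, finitely many translates $g_1,\ldots,g_m\in G$ cover: $G = \bigcup_{k\le m}(g_k + V)$. For each $g = g_k + v$ with $v\in V$, the above identity rewrites as $g = \bigl(g_k - (n-1)v^* + a_1^{(v)}\bigr) + a_2^{(v)} + \cdots + a_n^{(v)}$, so setting $A_1 := \bigcup_{k\le m}\bigl(g_k - (n-1)v^* + A_1^{(0)}\bigr)$ and $A_i := A_i^{(0)}$ for $i\ge 2$ yields $G = A_1 + \cdots + A_n$. Each $A_i$ is a finite union of $X_i$-internal slices (the internality of each translate $g_k + A_1^{(0)}$ again follows from the splitting, which makes $g_k + A_1^{(0)}$ a single $X_1$-slice in $G$), and hence is internal to $X_i$ as required.

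The main obstacle is the propagation of the splitting of $\mu$: Fact~\ref{fact:splits} provides only a finite partition of $G\times G$ on each piece of which $\mu$ splits, and the splitting functions may differ from piece to piece. The iterated binary products $a_1+a_2$, $(a_1+a_2)+a_3$, etc., and the translations by the $g_k$, may escape the single generic piece $D$ that we have controlled. Overcoming this requires repeated use of Proposition~\ref{prop:fsgsplits} and the fsg property supplied by compact domination to intersect the finitely many generic conditions demanded by the construction, thereby shrinking $V$ and selecting $v^*$ and representatives $g_k$ generically enough that every pair arising in the formula lies in $D$. This is where compact domination is used in an essential way.
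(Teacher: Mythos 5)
Your overall strategy (split the product map on a generic set, decompose a generic box into coordinate slices through a base point, then absorb finitely many translates) is the same as the paper's, but the step you yourself flag as ``the main obstacle'' is a genuine gap, and the fix you gesture at does not work as stated. To derive the identity $g = a_1^{(g)}+\cdots+a_n^{(g)}-(n-1)v^*$ from the splitting of the \emph{binary} operation $\mu$ on a single generic piece $D\subseteq G\times G$, you need every pair arising in the iterated computation --- $(a_1^{(g)},a_2^{(g)})$, $(a_1^{(g)}+a_2^{(g)},a_3^{(g)})$, the pairs comparing the partial sums with $g+v^*,\ g+2v^*,\dots$, and later the pairs involving the coset representatives $g_k$ --- to lie in $D$, \emph{with the same splitting functions} $\mu_i$. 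These conditions are preimages of $D$ under definable maps $G^2\to G^2$ built from $\mu$ itself, and preimages of generic sets under definable maps need not be generic; Proposition~\ref{prop:fsgsplits} only lets you intersect \emph{translates} of generic sets, so ``intersecting the finitely many generic conditions'' is not available in the form you invoke it. Nothing in your sketch rules out that the set of $(g,v^*)$ satisfying all these conditions simultaneously is empty.

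The paper's proof is engineered precisely to avoid this iteration problem: it applies Fact~\ref{fact:splits} to the $n$-ary product $P_n:G^n\to G$ (one application of splitting, not $n-1$ composed ones), uses Proposition~\ref{prop:CDgenerics}(2) to find a generic box $A_1\times\cdots\times A_n$ inside the splitting locus $S\subseteq G^n$, then uses Proposition~\ref{prop:fsgsplits} to produce a single generic $D\subseteq\bigcap_i a_iA_i$ together with splitting translations $f_i(x)=a_i^{-1}x$, so that the composite $f=P_n\circ(f_1\times\cdots\times f_n)$ splits on the \emph{entire cartesian power} $D^n$. Only then does the slice computation (your identity, the paper's $k^{n-1}D\subseteq D_1\cdots D_n$) become legitimate, because all the tuples compared there lie in $D^n$ by construction. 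If you want to salvage your binary-operation version, you would essentially have to redo this box-extraction and translation-intersection argument at each stage of the iteration, which is the real content of the proof and is missing from your writeup. (A minor additional point: internality of the translates $g_k+A_1^{(0)}$ needs no appeal to splitting --- internality is preserved under any definable bijection.)
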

\begin{proof}
	Let $P_n: G^n\to G$ be the function sending $(x_1,\ldots,x_n)$ to $\prod_{i=1}^{n}x_i$. Then $P_n\in \dd$. Since $G$ is compactly dominated, so is $G^n$ (Proposition \ref{prop:CDgenerics}(3)).  By Fact \ref{fact:splits} and Proposition \ref{prop:CDgenerics}(1), $P_n$ splits on a generic definable set $S\subseteq G^n$. By \prettyref{prop:CDgenerics}(2) and induction on $n$ we find definable generic sets $A_1, \ldots, A_n \subseteq G$ such that $A_1\times \ldots \times A_n \subseteq S$.
	By Proposition \ref{prop:fsgsplits} (and induction on $n$) we find $a_1, \ldots, a_n \in G$ such that the set $$U = \bigcap_{i=1}^n a_i A_i$$ is generic in $G$.
	Again by Fact \ref{fact:splits}  and induction, there is a generic set $D\subseteq U$ such that for every $i=1,\ldots, n$ the function $$f_i: x \in D \mapsto a_i^{-1}x$$ splits on $D$. Since $D\subseteq a_i A_i$, the image of $f_i$ is contained in $A_i$. It follows that the function
	$$f_1 \times \ldots \times f_n: D^n \to A_1\times \ldots \times A_n$$
	splits. Since $P_n$ splits on $A_1\times \ldots \times A_n$, we deduce that the function
	$$f= P_n \circ (f_1\times \ldots \times f_n)$$
	splits on $D^n$.
	Since $G$ is abelian,
	\begin{equation}\label{eq:f}
	f(x_1, \ldots, x_n) = a \prod_{i=1}^{n}x_i
	\end{equation}
	where $a = \prod_{i=1}^n a_i^{-1}$.
	By the orthogonality assumption, $D$ is a finite union of sets of the form $U_1\times \ldots \times U_n$ with $U_i \subseteq X_i$. By Proposition \ref{prop:CDgenerics}(1) one of these sets is generic, so by replacing $D$ with a smaller set we can assume that
	$$D = U_1\times \ldots \times U_n.$$
	Let $\pi_i: \prod_i X_i\to X_i$ be the projection onto the $i$-th component and let $$p_i:\prod_j X_j \to \prod_{j\ne i} X_j$$ be the projection that omits the $i$-th coordinate.
	Now fix $k\in D$ and let
	$$D_i = p_i^{-1}p_i(k) \cap D = \{x\in D \mid \forall j\neq i \; \pi_j(x)= \pi_j(k) \}.$$  Notice that $D_i$ is $U_i$-internal, hence {\em a fortiori} it is $X_i$-internal.
	We claim that
	 \begin{equation}\label{eq:splitD}
	 k^{n-1}D \subseteq D_1\dots D_n
	 \end{equation}
	To prove the claim, let $g\in D$ and let $x_i\in G$ be such that
	$$\pi_i(x_i) = \pi_i(g)\; \& \; \pi_j(x_i) = \pi_j(k)  \; \text{for} \; j\neq i.$$ Notice that $x_i\in D_i$.
Since $f$ splits on $D^n$, the value of $\pi_i f(x_1,\ldots, x_n)$ does not change if we replace $x_i$ with $g$ and $x_j$ with $k$ for $j\neq i$. By Equation (\ref{eq:f}) we then obtain $$\pi_i f(x_1,\dots, x_n) = \pi_i(ak^{n-1}g).$$  Since this holds for every $i$, we deduce that $$f(x_1,\dots, x_n) = ak^{n-1}g$$ and since $f(x_1,\ldots, x_n) = a \prod_{i=1}^n x_i$ we obtain Equation \ref{eq:splitD}.

We have thus shown that a translate of $D$ is contained in a product of $X_i$-internal sets. Since $D$ is generic, the same holds for $G$, so $G$ is a product of $X_i$-internal sets.
\end{proof}

\section{Decomposition of definably compact abelian groups (o-minimal)}\label{sec:omin1}

In this section $\CM$ is an o-minimal structure and $X_1,\dots, X_n$ are orthogonal definable sets. We prove the following variant of Theorem \ref{thm:NIP}, where the NIP hypothesis is replaced by o-minimality, but the group is only assumed to be internal
to $X_1\times \ldots \times X_n$. %rather than included.

\begin{thm} \label{thm:compact} Let $G$ be a definably compact abelian group internal to $X_1 \times \dots \times X_n$. Then $G$ admits a decomposition with respect to $X_1, \dots, X_n$.
\end{thm}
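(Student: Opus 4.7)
The plan is to reduce Theorem \ref{thm:compact} to Theorem \ref{thm:NIP} by replacing the internality surjection with a definable section, realizing $G$, up to definable isomorphism, as an abelian group \emph{contained} in a product of powers of the $X_i$.

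By internality there exist $m\in\N$ and a definable surjection $f\colon (X_1\times\dots\times X_n)^m \to G$. Since $\CM$ is o-minimal, definable Skolem functions are available (via cell decomposition and the ambient linear order; cf.\ \cite{vdDries1998}), so $f$ admits a definable section $s\colon G \to (X_1\times\dots\times X_n)^m$. After reindexing coordinates, regard $s$ as a definable injection of $G$ into $X_1^m\times\dots\times X_n^m$; the sets $X_1^m,\dots,X_n^m$ are still orthogonal, directly by Definition \ref{def:orth} applied to $X_1,\dots,X_n$ with all exponents equal to $m$.

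Let $H:=s(G)$, equipped with the group operation transported from $G$ along the bijection $s$. Then $H$ is a definable abelian group contained in $X_1^m\times\dots\times X_n^m$ and definably isomorphic to $G$; in particular, $H$ is definably compact, and, since o-minimal structures are NIP, compactly dominated by the Fact preceding Proposition \ref{prop:fsgsplits}. Theorem \ref{thm:NIP} then yields a decomposition $H = B_1 \dots B_n$ with each $B_i$ internal to $X_i^m$, and hence internal to $X_i$.

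Pulling back through the group isomorphism $s^{-1}\colon H \to G$ produces $G = s^{-1}(B_1)\dots s^{-1}(B_n)$, and each $s^{-1}(B_i)$ is internal to $X_i$: compose a definable surjection $X_i^{mk}\to B_i$ witnessing $X_i$-internality of $B_i$ with the restriction of $s^{-1}$ to $B_i$. This is the desired decomposition of $G$. The only ingredient used beyond Theorem \ref{thm:NIP} is the existence of the definable section $s$; this is exactly where o-minimality, rather than mere NIP, enters, by upgrading ``internal to $X_1\times\dots\times X_n$'' into ``contained in a product of powers of the $X_i$'', and it is the only genuine obstacle in the proof.
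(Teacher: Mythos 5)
Your overall strategy --- trade ``internal to'' for ``contained in a product of powers of the $X_i$'' via a definable injection, then quote Theorem \ref{thm:NIP} and pull the decomposition back through the isomorphism --- is exactly the paper's strategy. But the step that produces the injection is where your proof breaks. The claim that ``since $\CM$ is o-minimal, definable Skolem functions are available (via cell decomposition and the ambient linear order)'' is false for an arbitrary o-minimal structure: definable choice is only guaranteed in (for instance) o-minimal expansions of ordered groups. In the pure dense linear order $(\R,<)$, the definable family of open intervals $(a,b)$ has no definable choice function, even with parameters: by quantifier elimination and cell decomposition in the parameter space, any candidate value must equal $a$, $b$, or one of finitely many fixed parameters, and for most intervals none of these lies in $(a,b)$. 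The paper is emphatic (see the introduction and the remark following Theorem \ref{thm:dim1cohesive}) that $\CM$ is an \emph{arbitrary} o-minimal structure with no ambient group operation, precisely because the target application (Theorem \ref{thm:lattice2}) works in a concatenation of o-minimal structures, which carries no global group structure. So the definable section $s$ of the internality surjection need not exist, and your argument collapses at its first step. (The group structure on $G$ does not rescue it either: definable choice for families of subsets of $G^n$ would not give choice on the fibers of $(X_1\times\dots\times X_n)^m\to G$, which live in the $X_i$'s.)

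The paper fills this gap with exactly the machinery you hoped to bypass: Lemma \ref{lem:embed} builds the o-minimal envelope $Y=\omin{X_1\times\dots\times X_n}$, the internality surjection $Y^m\to G$ exhibits $G$ as a group \emph{interpretable} in the induced o-minimal structure on $Y$, and the theorem of \cite{Ramakrishnan2014} that interpretable groups are definably isomorphic to definable groups then yields the definable injection $G\to X_1'\times\dots\times X_n'$ of Proposition \ref{prop:intdef}, with each $X_i'$ bi-internal to $X_i$. That deep result is doing the work you assigned to definable Skolem functions. The rest of your argument --- transporting the group structure to the image, checking definable compactness and hence compact domination, applying Theorem \ref{thm:NIP}, and pushing the decomposition back --- is correct once the injection is actually in hand.
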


We need the following lemma.
\begin{lem}\label{lem:embed}
	Let $X$ be an infinite set definable in an o-minimal structure $\CM$. Then there is a definable set $Y = \omin{X}$ such that:
	\begin{enumerate}
		\item
		$X$ and $Y$ are internal to each other;
		\item there is $k \in \N$ and a definable injective map from $X$ to $Y^k$;
		\item $Y$ has a definable linear order $\prec$ such that $(Y,\prec)$ with the induced structure from $\CM$ is o-minimal.\footnote{The induced structure contains a predicate for each $\CM$-definable subset of $Y^n$ for $n\in \N$.} We call the resulting structure $\CY$ the {\bf o-minimal envelope} of $X$.
	\end{enumerate}
\end{lem}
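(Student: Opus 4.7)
The plan is to realize $Y$ as a definable closed subset of the universe $M$ of $\CM$, constructed from the coordinate projections of $X$. Writing $X\subseteq M^n$ and letting $\pi_i\colon M^n\to M$ denote the $i$-th coordinate projection, I set
\[Y:=\overline{\pi_1(X)\cup\dotsb\cup\pi_n(X)},\]
the closure being taken in the order topology of $M$. By o-minimality of $\CM$, $Y$ is definable and decomposes as a finite union of closed intervals and isolated points of $M$. Part (2) is then immediate: since each coordinate of an element of $X$ lies in $Y$, we have $X\subseteq Y^n$ and the inclusion is a definable injection with $k=n$.

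For (1), the inclusion $X\hookrightarrow Y^n$ extends to a definable surjection $Y^n\twoheadrightarrow X$ by sending $Y^n\setminus X$ to a fixed point, so $X$ is $Y$-internal. For the reverse direction, note that $Y\setminus\bigcup_i\pi_i(X)$ consists of finitely many boundary points introduced by closing up. Since $X$ is infinite, I can choose enough distinct elements of $X$ to serve as selectors: a definable map $X\times X\to Y$ can be built piecewise so that for certain selector values in the second coordinate the map returns $\pi_i(x)$ for the various $i$, while other selector values send the map to each boundary point, with an arbitrary default elsewhere. This surjects onto $Y$, showing that $Y$ is $X$-internal.

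For (3), I take $\prec$ to be the order inherited from $M$. Any definable subset of $Y$ in the induced structure has the form $Y\cap S$ for a $\CM$-definable $S\subseteq M$, which by o-minimality of $\CM$ is a finite union of $M$-intervals and points. Intersecting an $M$-interval $(a,b)$ with $Y$ yields a finite union of $Y$-intervals whose endpoints are either $a$ or $b$ themselves (when these lie in $Y$) or component endpoints of $Y$ (when $a$ or $b$ falls in a gap of $Y$); in either case these endpoints lie in $Y\cup\{\pm\infty\}$, which is the crucial role of the closure. The main delicate point is exactly this: without closing up, a definable subset such as $\{y\in Y\colon y<c\}$ for $c$ in a gap of $Y$ could fail to be a finite union of $\prec$-intervals with endpoints in $Y\cup\{\pm\infty\}$, and the induced structure would then not be o-minimal. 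Since the closure only adds finitely many points, the internality arguments for (1) remain valid.
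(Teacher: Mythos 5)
Your construction is correct and essentially the paper's: both build $Y$ out of the coordinate projections $\pi_i(X)$ and then repair the induced order by adjusting finitely many points --- you by taking the topological closure inside $M$, the paper by placing the projections as disjoint labelled copies $\{a_i\}\times\pi_i(X)\subseteq M^2$ ordered lexicographically and inserting a single separating point between consecutive open intervals. Both fixes serve the same purpose (every definable subset of $Y$ becomes a finite union of points and $\prec$-intervals with endpoints in $Y\cup\{\pm\infty\}$), and your verifications of bi-internality and of the coordinate embedding $X\hookrightarrow Y^{n}$ match the paper's.
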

\begin{pf}
	Suppose $X\subseteq M^m$ and let $\pi_i:M^m \to M$ be the projection onto the $i$-th coordinate ($i = 1,\dots,m$). Fix parameters $a_1 < \dots < a_m$ in $M$ and let $Z = \bigcup_i \{a_i\}\times \pi_i(X) \subseteq M^2 $. Then $Z$ has dimension $1$ and is bi-internal to $X$. Each $\pi_i(X)$ is a finite union of open intervals and points with induced order from $\CM$. We order $Z$ lexicographically, i.e. all the elements of $\{a_i\}\times \pi_i(X)$ preceed all the elements of $\{a_{i+1}\} \times \pi_{i+1}(X)$. Adding and removing from $Z$ finitely many points we obtain a set $Y$ satisfying point (3) (we need each pair of consecutive open intervals to be separated by exactly one point). Points (1) and (2) are clear from the construction.
\end{pf}

\begin{defn}[{\cite[Def. 1.1]{Ramakrishnan2014}}]\label{defn:quotients}
	Let $X,Y$ be definable sets, $E_1,E_2$ two definable equivalence relations on $X$ and $Y$ respectively. A function $f : X/E_1 \to Y/E_2$ is called definable if the set $\{(x,y) \in X\times Y \mid f([x]_{E_1}) = [y]_{E_2}\}$ is definable.
\end{defn}

\begin{prop}\label{prop:intdef}
	Let $G$ be a definable group in an o-minimal structure $\CM$ and let $X_1, \dots, X_n$ be definable sets in $\CM$.  If $G$ is internal to $X_1\times \dots \times X_n$, then there is an injective definable map $f:G\to {X'_1}\times  \dots \times {X'_n}$, where each $X'_i$ is bi-internal to $X_i$ ($i=1,\dots,n$).
\end{prop}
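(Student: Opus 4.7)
The plan is to apply Ramakrishnan's embedding theorem to $G$ and then argue that each resulting $1$-dimensional factor can be grouped with a single $X_i$. By \cite[Theorem 3]{Ramakrishnan2014} there is an injective definable map $\iota\colon G \to H_1 \times \dots \times H_k$, with each $H_j$ a $1$-dimensional definable group in $\CM$. Let $q_j\colon H_1\times\dots\times H_k\to H_j$ be the projection onto the $j$-th factor. Replacing $H_j$ by the definable image of $q_j\circ\iota$, we may assume each $H_j$ is a definable image of $G$; in particular each $H_j$ is internal to $X_1\times\dots\times X_n$ and of dimension at most one, while $\iota$ remains an injection into $H_1\times\dots\times H_k$.

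The central step is the claim that every infinite definable set $S$ of dimension one that is internal to $X_1\times\dots\times X_n$ is internal to some single $X_i$. Given a definable surjection $\phi\colon (X_1\times\dots\times X_n)^m \to S$, use the generic form of definable choice available in any o-minimal structure (valid up to a finite exceptional set, since the base $S$ is one-dimensional) to obtain a definable section $s\colon S'\to (X_1\times\dots\times X_n)^m$ with $S\setminus S'$ finite and $\phi\circ s = \mathrm{id}_{S'}$. Then $T := s(S') \subseteq X_1^m\times\dots\times X_n^m$ is an infinite one-dimensional definable set, definably bijective to $S'$ via $s$ and $\phi$. Some projection of $T$ onto one of the $X_i^m$-coordinates must have infinite image, else $T$ would sit in a finite product of finite sets and be finite. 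By o-minimal uniform finiteness, the fibers of this projection are uniformly bounded, so enumerating them lexicographically in $\CM$ yields a definable surjection $X_i^m\times\{1,\dots,r\}\to T$, showing $T$ is internal to $X_i$. Transferring internality to $S'$ via $s$ and adjoining the finite remainder $S\setminus S'$, we conclude that $S$ itself is internal to $X_i$.

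Apply the claim to each infinite $H_j$, obtaining $i(j)\in\{1,\dots,n\}$ with $H_j$ internal to $X_{i(j)}$; finite $H_j$ are internal to every nonempty $X_i$ and can be assigned arbitrarily. Set $J_i := \{j : i(j)=i\}$ and $X'_i := X_i\times\prod_{j\in J_i}H_j$, interpreting the empty product as a singleton. Each $X'_i$ is bi-internal to $X_i$: internal to $X_i$ as a finite product of sets internal to $X_i$, and $X_i$ is recovered by the first-coordinate projection $X'_i\to X_i$. Fix $x^0_i\in X_i$ and define $f\colon G\to X'_1\times\dots\times X'_n$ by mapping $g$ to the tuple whose $i$-th coordinate is $(x^0_i,(q_j\iota(g))_{j\in J_i})$; injectivity of $f$ is inherited from $\iota$. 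The main obstacle is the central claim: extracting the definable section of $\phi$ in an arbitrary o-minimal structure requires care because definable Skolem functions need not exist, but the one-dimensionality of $S$ is exactly what makes definable choice modulo a finite set available.
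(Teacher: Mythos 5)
Your route is genuinely different from the paper's, but it breaks at its central step. First, the ``generic form of definable choice over a one-dimensional base'' that you invoke does not exist in an arbitrary o-minimal structure: in $(\R,<)$ the projection $\{(x,y): y>x\}\to\R$ admits no definable section even off a finite set, because every definable unary function in a pure dense order is piecewise constant or the identity. One-dimensionality of the base is irrelevant; the obstruction lives in the fibres. Second, the claim you want to extract --- that an infinite one-dimensional definable set internal to $X_1\times\dots\times X_n$ is internal to a single $X_i$ --- is false as stated: in $(\R,<)$ the interval $(0,2)$ is internal to $(0,1)\times(1,2)$ (map $(x_1,y_1,x_2,y_2)$ to $y_1$, to $x_1$, or to $1$ according to the position of $x_2$), yet it is internal to neither factor, again because images of definable maps from $(0,1)^m$ lie in $(0,1)$ plus a finite set. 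What is true is the version for one-dimensional \emph{groups} and \emph{orthogonal} $X_i$: each infinite $q_j\iota(G)$ is generic in $H_j$, hence bi-internal to it, hence cohesive by Theorem~\ref{thm:dim1cohesive} and Proposition~\ref{prop:int-cohesive}, hence indecomposable, and iterating indecomposability over the orthogonal factors (via Corollary~\ref{cor:both}) places it inside a single $X_i$. That repair salvages your construction of the $X_i'$ and of $f$ --- but only under an orthogonality hypothesis that Proposition~\ref{prop:intdef} does not make.

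The paper avoids both problems by not sorting one-dimensional factors at all: it forms the o-minimal envelope $Y=\omin{X_1\times\dots\times X_n}$ of Lemma~\ref{lem:embed}, views $G$ as a group interpretable in the induced o-minimal structure on $Y$, and applies ``interpretable groups are definable'' from \cite{Ramakrishnan2014} to obtain a definable injection $G\to Y^k$; since $Y$ is assembled coordinate-wise from the projections of the $X_i$, it embeds into $Y_1\times\dots\times Y_n$ with each $Y_i$ bi-internal to $X_i$, and one takes $X_i'=Y_i^k$. No choice functions and no orthogonality are needed. If you wish to keep your approach, you must either add the orthogonality hypothesis (which does hold where the proposition is applied) and replace the choice argument by the cohesiveness/indecomposability argument sketched above, or pass to the envelope as the paper does.
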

\begin{pf}
	By 	Lemma \ref{lem:embed} $G$ is internal to $Y= \omin{X_1\times \dots \times X_n}$, so it can be considered as an interpretable group in the o-minimal structure $\mathcal Y$. By \cite{Ramakrishnan2014} interpretable groups in an o-minimal structure are definably (in the sense of Definition \ref{defn:quotients}) isomorphic to definable groups. It follows that there is a definable injective map $f:G \to Y^k$ for some $k\in \N$. By the construction of the o-minimal envelope, $Y$ can be embedded into a product of sets $Y_i$, where $Y_i$ is bi-internal to $X_i$ (if $X_i \subseteq M^n$, it suffices to define $Y_i$ as $\pi_1(X_i) \times \dots \times \pi_n(X_i)$). Now it suffices to take $X'_i = Y_i^k$.
\end{pf}

We are now ready to finish the proof of the theorem.

\begin{proof}[Proof of Theorem \ref{thm:compact}]
By  Proposition \ref{prop:intdef} there are definable sets $X_1',\dots, X_n'$ with $X_i'$ bi-internal to $X_i$ such that $G$ is definably isomorphic to a group $G'$ contained in $X_1'\times \dots \times X_n'$. Since $G$ is definably compact, $G'$ also is. By Theorem \ref{thm:NIP} $G'$ admits a decomposition with respect to $X_1', \dots, X_n'$, hence also with respect to $X_1,\dots, X_n$. Hence so does $G$.
\end{proof}

\section{Decomposition: general case (o-minimal)}\label{sec:omin2} In this section, we prove our main theorems (\ref{thm:ominimal} and \ref{thm:main2}). Fix an o-minimal structure \cal M and definable orthogonal sets $X_1, \ldots, X_n$.
We first prove that the existence of decompositions is preserved under taking central extensions.

\begin{lem}\label{lem:central}
	Let $1\to N \to G \stackrel{f}{\to}  H \to 1$ be a definable exact
sequence of definable groups internal to $X_1\times \dots \times X_n$ with
$N<Z(G)$.  If $N$ and $H$ admit a decomposition with respect
to~$X_1,\ldots,X_n$, then $G$ too admits a decomposition with respect to the same
orthogonal sets.
\end{lem}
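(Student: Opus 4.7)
My plan is to lift the decomposition of $H$ to $G$ using definable choice, and then combine it with the decomposition of $N$ inside $G$, using centrality to interleave the factors.

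First I would write the given decompositions $N=N_1\cdots N_n$ and $H=H_1\cdots H_n$, with $N_i$ and $H_i$ internal to $X_i$. Using definable choice in the o-minimal structure $\CM$, pick a definable section $\sigma\colon H \to G$ of $f$ (set-theoretic, not a homomorphism). Set $B_i := \sigma(H_i) \subseteq G$. Since $H_i$ is internal to $X_i$, say via a definable surjection $\psi_i\colon X_i^{k_i}\to H_i$, the composition $\sigma\circ\psi_i\colon X_i^{k_i}\to B_i$ is a definable surjection, so $B_i$ is internal to $X_i$. Moreover, $f(B_i)=H_i$ by construction.

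Next, given $g\in G$, write $f(g)=h_1\cdots h_n$ with $h_i\in H_i$, and choose $b_i\in B_i$ with $f(b_i)=h_i$. Then $f(b_1\cdots b_n)=f(g)$, so $g(b_1\cdots b_n)^{-1}\in N$, and we can write $g(b_1\cdots b_n)^{-1}=\nu_1\cdots\nu_n$ with $\nu_i\in N_i$. Thus
\[
g = \nu_1\nu_2\cdots\nu_n\, b_1 b_2\cdots b_n.
\]
Now I use the centrality hypothesis $N < Z(G)$: each $\nu_i$ commutes with every $b_j$, so the above product can be rearranged as
\[
g = (\nu_1 b_1)(\nu_2 b_2)\cdots(\nu_n b_n).
\]
Defining $A_i := N_i B_i \subseteq G$, this shows $G = A_1 A_2\cdots A_n$.

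Finally, I must verify that each $A_i$ is internal to $X_i$. This is straightforward: if $N_i$ is the image of a definable surjection from $X_i^{p_i}$ and $B_i$ from $X_i^{q_i}$, then the group multiplication map $G\times G\to G$ composed with these gives a definable surjection $X_i^{p_i+q_i}\to A_i$, so $A_i$ is internal to $X_i$. This completes the decomposition of $G$ with respect to $X_1,\ldots,X_n$. The only nontrivial ingredient is definable choice in the o-minimal setting (to produce $\sigma$), and the only place the centrality hypothesis is used is in the rearrangement; without it one would still have $g$ as a product of $\nu$'s and $b$'s, but not grouped coordinate by coordinate.
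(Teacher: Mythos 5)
Your argument is essentially the paper's: take a definable section $\sigma$ of $f$, set $B_i=\sigma(H_i)$, and use the centrality of $N$ to rearrange the product; your bookkeeping is even slightly cleaner, since you apply the decomposition of $N$ once to the single element $g(b_1\cdots b_n)^{-1}$ and so need only one copy of $N_i$ in each factor $A_i=N_iB_i$, whereas the paper ends up with $\sigma(H_i)N_i\cdots N_i$ ($n$ occurrences). The one point that needs repair is your justification for the existence of $\sigma$: ``definable choice'' is not available in an arbitrary o-minimal structure (the paper is explicit that $\CM$ need not expand a group, and, e.g., a pure dense linear order has no definable Skolem functions), so one cannot simply definably select preimages. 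What saves the argument is that a definable \emph{surjective homomorphism} of definable groups in an o-minimal structure always admits a definable set-theoretic section --- this is the result of Edmundo that the paper cites --- so the fact you need is true, but it is a theorem about definable group extensions rather than an instance of definable choice. With that citation substituted, your proof is correct and follows the same route as the paper's.
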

\begin{pf}
	By  assumption, we can write $H = H_1 \dots H_n$ and $N = N_1 \dots N_n$, where $H_i$ and $N_i$ are $X_i$-internal definable sets (not necessarily subgroups). We have
$$G = f^{-1}(H_1) \dots f^{-1}(H_n).$$
 By \cite[Theorem 2.5]{EDMUNDO2003}, there is a definable section $\sigma: H\to G$. Since $f^{-1}(H_i) = \sigma(H_i) N$, we have
 $$G = \sigma(H_1)N \dots \sigma(H_n)N.$$ Since $N= N_1\dots N_n$ is contained in the center of $G$,  it follows that $G = U_1\dots U_n$, where $U_i$ is the $X_i$-internal set $\sigma(H_i) N_i\dots N_i$ ($n$ occurrences of $N_i$).
\end{pf}

We can now handle the abelian case.

\begin{prop}\label{prop:abelian} Let $G$ be a definable group internal to $X_1\times \dots \times X_n$.
	If $G$ is abelian, then $G$ admits a decomposition with respect
to~$X_1,\ldots,X_n$.
\end{prop}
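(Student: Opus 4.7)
The plan is to reduce the general abelian case to the definably compact case handled by Theorem \ref{thm:compact}, using the structure theory of abelian groups in o-minimal structures together with Lemma \ref{lem:central}. By Observation \ref{obs:finite-index} I may replace $G$ by its definably connected component $G^0$. The standard structure theorem for definably connected abelian groups in o-minimal structures (Peterzil--Starchenko, Edmundo, Hrushovski--Peterzil--Pillay) supplies a unique maximal definably compact definable subgroup $K$ of $G^0$ such that $L := G^0/K$ is torsion-free. Since $G$ is abelian the extension $1\to K\to G^0\to L\to 1$ is central, so Lemma \ref{lem:central} reduces the task to decomposing $K$ and $L$ individually, and then combining them.

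The compact piece $K\leq G$ is internal to $X_1\times\dots\times X_n$, so Theorem \ref{thm:compact} directly yields a decomposition $K=B_1\dots B_n$ with $B_i$ internal to $X_i$. For the torsion-free piece $L$, I would argue by induction on $\dim L$. In positive dimension, a one-parameter subgroup produces a $1$-dimensional definable subgroup $N\leq L$; by Theorem \ref{thm:dim1cohesive} $N$ is cohesive, hence indecomposable by Proposition \ref{prop:int-cohesive}(2). Since $N$ is internal to the product $X_1\times\dots\times X_n$ of pairwise orthogonal sets, iterating indecomposability along the factors (using Corollary \ref{cor:both}) forces $N$ to be internal to a single $X_j$, which provides the trivial decomposition of $N$. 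The quotient $L/N$ is again definably connected abelian and internal to $X_1\times\dots\times X_n$; using the ``linear'' structure of torsion-free definable abelian groups in the o-minimal setting one arranges for $L/N$ to remain torsion-free of strictly smaller dimension. Induction then supplies a decomposition of $L/N$, and another application of Lemma \ref{lem:central} to the central sequence $1\to N\to L\to L/N\to 1$ yields a decomposition of $L$. A final use of Lemma \ref{lem:central} on $1\to K\to G^0\to L\to 1$, followed by Observation \ref{obs:finite-index}, concludes the proof.

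The main obstacle is the torsion-free step: compact domination is unavailable for $L$, so the measure-theoretic argument of Theorem \ref{thm:NIP} cannot be reused. The inductive approach circumvents this by exploiting the cohesiveness of $1$-dimensional definable groups (Theorem \ref{thm:dim1cohesive}) together with the ``linear'' nature of torsion-free definable abelian groups in o-minimal structures, which ensures that one can peel off a $1$-dimensional subgroup while keeping the quotient torsion-free. Making this precise -- in particular, extracting a $1$-dimensional definable subgroup $N\leq L$ with $L/N$ still torsion-free so that induction applies -- is the most delicate technical point and is where the full strength of o-minimal Lie-theoretic structure enters.
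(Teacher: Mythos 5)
Your overall strategy (induction on dimension, Theorem \ref{thm:compact} for the definably compact part, peeling off $1$-dimensional subgroups via Lemma \ref{lem:central}, and cohesiveness of $1$-dimensional groups via Theorem \ref{thm:dim1cohesive} and Proposition \ref{prop:int-cohesive}(2) for the base case) is the right one and is essentially the paper's. But the way you organize it introduces gaps that the paper's argument deliberately avoids. First, you invoke ``the standard structure theorem'' producing a maximal definably compact definable subgroup $K\leq G^0$ with $G^0/K$ torsion-free, and later ``one-parameter subgroups'' and the ``linear'' structure of torsion-free abelian groups. These tools are developed in the literature for o-minimal expansions of fields (or at least of ordered groups), whereas here $\CM$ is an \emph{arbitrary} o-minimal structure --- the introduction stresses that this generality is essential for the application to Theorem \ref{thm:lattice2}, since a concatenation of o-minimal structures carries no global group structure. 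You give no citation valid at this level of generality, and the existence of a definable maximal definably compact subgroup is exactly the kind of statement that cannot be taken for granted here. Second, you yourself flag as ``the most delicate technical point'' the need to choose $N\leq L$ with $L/N$ still torsion-free, and you do not prove it; this is an unresolved step in your argument as written.

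The paper shows that the entire detour is unnecessary. One inducts directly on $\dim(G)$ over \emph{all} abelian groups internal to $X_1\times\dots\times X_n$: if $G$ is definably compact, apply Theorem \ref{thm:compact}; if not, Peterzil--Steinhorn \cite[Theorem 1.2]{Peterzil1999} --- which does hold in arbitrary o-minimal structures --- gives a $1$-dimensional torsion-free definable subgroup $H<G$, which is cohesive, hence indecomposable, hence internal to a single $X_i$ and trivially decomposed; then $G/H$ has strictly smaller dimension and the induction hypothesis applies to it \emph{whatever its compactness type}, so Lemma \ref{lem:central} finishes. Because the induction hypothesis is not restricted to torsion-free groups, there is no need for the global $K$-versus-$L$ splitting nor for quotients to remain torsion-free. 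If you replace your two-stage reduction by this direct induction, your proof becomes the paper's.
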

\begin{pf}   We reason by induction on dimension. For $\dim (G)=1$, $G$ is
cohesive by  \prettyref{thm:dim1cohesive}, so it is indecomposable (Proposition \ref{prop:int-cohesive}), hence it is internal to one of the $X_i$.
Let $\dim (G)>1$.
If $G$ is definably compact, then we conclude by~\ref{thm:compact}.
If not, then by \cite[Theorem 1.2]{Peterzil1999}, $G$ has a
$1$-dimensional torsion-free definable subgroup~$H<G$. Since $\dim(H)=1$,
$H$ admits a decomposition. By induction on dimension, so does $G/H$.
Therefore, since $G$ is
abelian, we can apply \prettyref{lem:central}.
\end{pf}

The following fact must be well-known, but we include a proof for completeness.

\begin{fact}\label{fact:GmodZ}
	Let $G$ be a connected group definable in an o-minimal structure. If $Z(G)$ is finite, then $G/Z(G)$ is centerless.
\end{fact}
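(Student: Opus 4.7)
The plan is to pull the center of the quotient back to $G$ and argue that it must coincide with $Z(G)$ itself, using the connectedness of $G$ together with the finiteness (hence discreteness) of $Z(G)$.

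First, let $Z = Z(G)$, let $\pi\colon G \to G/Z$ be the quotient, and set $H = \pi^{-1}(Z(G/Z))$. This is a definable subgroup of $G$ containing $Z$, and by construction $H/Z = Z(G/Z)$; our goal is to show $H = Z$, since then $Z(G/Z)$ is trivial as required.

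Fix $h \in H$. The condition $hZ \in Z(G/Z)$ means that, for every $g \in G$, the commutator $[h,g] = hgh^{-1}g^{-1}$ lies in $Z$. This gives a definable map $c_h \colon G \to Z$ defined by $c_h(g) = [h,g]$, and this map is continuous with respect to the definable group topology on both sides. Since $Z$ is finite, it is a finite subset of the definable manifold $G$, hence closed and discrete in $G$ with its induced topology.

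Now $G$ is connected in the group topology, so the continuous image $c_h(G) \subseteq Z$ must be connected in $Z$; but $Z$ is discrete, so $c_h$ is constant. Since $c_h(e_G) = e_G$, we conclude $c_h \equiv e_G$, i.e.\ $h$ centralises every $g \in G$, so $h \in Z$. Thus $H \subseteq Z$, and combined with the reverse inclusion we get $H = Z$, hence $Z(G/Z) = H/Z$ is trivial.

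The only subtlety is ensuring that ``connected plus discrete image implies constant'' applies in the definable manifold setting; this is immediate because a finite set in the definable manifold topology of $G$ consists of isolated points (each singleton is definable of dimension zero and open in its induced topology), so any definable/continuous map from the connected group $G$ into it must be constant. No hard machinery beyond \cite[Proposition 2.5]{Pillay1988} is needed.
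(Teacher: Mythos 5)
Your proof is correct and is essentially the paper's own argument: both fix an element whose class is central in $G/Z(G)$, observe that the resulting commutator map $G\to Z(G)$ is a definable continuous map from a connected group into a finite set, conclude it is constant equal to the identity, and deduce the element lies in $Z(G)$. The only cosmetic difference is your packaging via the pullback subgroup $H=\pi^{-1}(Z(G/Z))$, which changes nothing of substance.
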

\begin{pf} Let $a\in G$, such that $aZ(G)$ is in the center of $G/Z(G)$. We want to prove that $a\in Z(G)$. We have that for all $b\in G$, $a^{-1}b^{-1}ab\in Z(G)$. Since $G$ is connected, the image of the map $f:G\to Z(G)$ sending $b\in G$ to $a^{-1}b^{-1}ab \in Z(G)$ is connected. Since $Z(G)$ is finite, $f$ must be constant. Since $f$ maps the identity $e\in G$ to $e$, we have $a^{-1}b^{-1}ab = e$. Thus $a\in Z(G)$, as needed.
\end{pf}

We can now prove our first main result.

\begin{thm}\label{thm:ominimal} Let $G$ be internal to
$X_1\times \dots \times X_n$ where $X_1,\ldots, X_n$ are
 orthogonal definable sets. Then $G$  admits a
decomposition with respect to~$X_1,\ldots,X_n$.
\end{thm}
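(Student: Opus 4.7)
My plan is to prove Theorem~\ref{thm:ominimal} by induction on $\dim G$, reducing each step to Proposition~\ref{prop:abelian} (the abelian case) and gluing via Lemma~\ref{lem:central}. After replacing $G$ with its connected component (Observation~\ref{obs:finite-index}), the base case $\dim G = 1$ follows from Theorem~\ref{thm:dim1cohesive} combined with Proposition~\ref{prop:int-cohesive}(2): $G$ is cohesive, hence indecomposable, hence internal to a single $X_i$.

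For the inductive step, the cleanest subcase is when the connected component $Z(G)^0$ of the center is infinite. Then $Z(G)^0$ is a proper, abelian, normal, definable subgroup, which admits a decomposition by Proposition~\ref{prop:abelian}; the quotient $G/Z(G)^0$ has strictly smaller dimension and so admits a decomposition by the inductive hypothesis; and Lemma~\ref{lem:central} combines the two into a decomposition of $G$.

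The opposite case is $Z(G)^0 = 1$, i.e.\ $Z(G)$ finite. If $G$ is definably simple, then $G$ is cohesive (as recalled in the introduction), hence indecomposable, hence internal to a single $X_i$, and we are done. Otherwise one has a proper connected normal definable subgroup $N \triangleleft G$; both $N$ and $G/N$ admit decompositions by induction, but the extension $1 \to N \to G \to G/N \to 1$ is in general not central, so Lemma~\ref{lem:central} does not apply verbatim.

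This last subcase is the main obstacle. I expect it to be resolved by exploiting the o-minimal structure theory: the solvable radical $R(G)$ can be climbed through a tower of $1$-dimensional (cohesive) subquotients via Prop.~\ref{prop:abelian} applied at each stage, while the semisimple quotient $G/R(G)$ is a central extension (with finite kernel) of an almost direct product of definably simple---hence cohesive---groups, each therefore internal to a single $X_i$. The remaining non-central extension by $R(G)$ should be handled either by a strengthening of Lemma~\ref{lem:central} that replaces centrality by normality, taking advantage of the orthogonality of the $X_i$ to control conjugates of $X_i$-internal subsets of $R(G)$, or by arranging a finer normal series in which every successive step is genuinely central, reducing to iterated application of Lemma~\ref{lem:central} as stated. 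In either route, the technical heart lies in controlling how the ambient conjugation action rearranges a product $\sigma(K_1)\cdots\sigma(K_n)\,M_1\cdots M_n$ (obtained from decompositions of $G/N$ and $N$) into an interleaved product of $X_i$-internal sets.
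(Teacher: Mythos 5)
Your overall architecture (reduce to $G^0$, induct on dimension, split on whether the center is infinite, use Proposition~\ref{prop:abelian} and Lemma~\ref{lem:central} for the infinite-center case) coincides with the paper's, and that part is fine. The genuine gap is exactly the subcase you flag as ``the main obstacle'': $Z(G)$ finite and $G$ not definably simple. You do not resolve it, and the two routes you sketch are doubtful as stated: a normal series with \emph{central} successive quotients exists only for nilpotent groups, so that reduction cannot work in general; and a version of Lemma~\ref{lem:central} for merely normal $N$ would require showing that the conjugates $\sigma(h)N_i\sigma(h)^{-1}$, as $h$ ranges over the quotient, can be absorbed into finitely many $X_i$-internal sets --- perhaps plausible via Fact~\ref{fact:family}, but not carried out. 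Moreover, your appeal to ``definably simple implies cohesive'' is Proposition~\ref{prop:definably-simple}, which the paper itself obtains only as a by-product of the very proof you are trying to give, so you would need to supply its proof as an input.

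The paper avoids the non-central extension problem entirely. In the finite-center case it quotients by $Z(G)$ (this extension \emph{is} central, so Lemma~\ref{lem:central} applies), notes that $H = G/Z(G)$ is centerless by Fact~\ref{fact:GmodZ}, and then invokes \cite{Ramakrishnan2014} together with \cite[Theorems 3.1 and 3.2]{Peterzil2000}: the definably connected centerless group $H$ is definably isomorphic to a direct product $H_1\times\dots\times H_k$ of definable linear groups $H_i < GL(n,R_i)$ over definable real closed fields $R_i$. Since $\dim R_i = 1$, each $R_i$ is cohesive (Theorem~\ref{thm:dim1cohesive}), hence indecomposable (Proposition~\ref{prop:int-cohesive}(2)), hence internal to a single $X_j$; each $H_i$ is $R_i$-internal, so the direct product is already a decomposition. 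This is the missing ingredient: the centerless quotient is an honest direct product, so no solvable radical, no almost-direct products, and no non-central extensions ever arise.
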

\begin{proof}
We observe that, since the connected component of the identity~$G^0$ has
finite index
in~$G$, by Observation~\ref{obs:finite-index}, it suffices to find a
decomposition of~$G^0$. We may thus assume that $G$ is
connected.

We prove, now, the theorem, by induction on $\dim(G)$. For $\dim(G)=0$, it is obvious. Assume $\dim (G)>0$. \smallskip

  	Case 1. Suppose that the centre~$Z(G)$ is finite.
	Then $Z(G)$ admits a decomposition, and by \prettyref{lem:central} it
suffices to prove that $H=G/Z(G)$ has a decomposition.
Since $Z(G)$ is finite, $H$ is centerless (\prettyref{fact:GmodZ}).

By~\cite{Ramakrishnan2014}, $H$ is definably isomorphic to a definable
group. By \cite[Theorems 3.1 and 3.2]{Peterzil2000},
	it follows that there are definable real closed fields $R_1,
\dots, R_k$ and definable linear groups $H_i<GL(n,R_i)$ such that $H$ is
definably isomorphic to $H_1\times \dots \times H_k$.
A definable real closed field in an o-minimal structure has dimension $1$
\cite[Theorem 4.1]{Peterzil1999}. By Theorem~\ref{thm:dim1cohesive} each $R_i$ is cohesive, hence internal to some $X_j$ by Proposition \ref{prop:int-cohesive}(2). It follows that the each of the 
subgroups~$H_1,\ldots,H_k$ is internal to one
of the $X_j$. Therefore $H$ admits a decomposition with respect to the orthogonal sets~$X_1,\ldots,X_n$. \smallskip

	  Case 2. Suppose $Z(G)$ is infinite. By the abelian case
(\prettyref{prop:abelian}), $Z(G)$ has a decomposition with respect
to~$X_1,\ldots,X_n$. By induction on the dimension, $G/Z(G)$ has a
decomposition too. Therefore we can conclude by~\prettyref{lem:central}.
\end{proof}	
As a by-product of the proof we obtain:
\begin{prop}\label{prop:definably-simple}
	If $G$ is definably simple, then $G$ is cohesive.
\end{prop}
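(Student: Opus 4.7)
The plan is to extract the proposition directly from Case~1 of the proof of Theorem~\ref{thm:ominimal}, combined with Theorem~\ref{thm:dim1cohesive} and Proposition~\ref{prop:int-cohesive}(1). I would split into the abelian and non-abelian cases.

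If $G$ is abelian and definably simple, then $G$ has no proper non-trivial definable subgroup. I would argue that this forces $\dim G \leq 1$: any definable abelian group of larger dimension in an o-minimal structure admits proper definable subgroups (for instance, one-parameter subgroups or, in the definably compact case, torsion subgroups), so the hypothesis pins $G$ down to dimension $1$. Theorem~\ref{thm:dim1cohesive} then immediately gives cohesiveness.

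Assume then $G$ is non-abelian. The center $Z(G)$ is a proper definable normal subgroup of $G$, hence $Z(G) = \{e\}$ by definable simplicity; in particular $Z(G)$ is finite, placing us in the setup of Case~1 of the proof of Theorem~\ref{thm:ominimal}. By~\cite{Ramakrishnan2014}, $G$ is definably isomorphic to a definable group, and then by~\cite[Theorems~3.1 and~3.2]{Peterzil2000} this definable group splits as a direct product $H_1 \times \cdots \times H_k$, where each $H_i$ is a definable linear group over a definable real closed field~$R_i$. Each factor $H_i$ corresponds under the isomorphism to a definable normal subgroup of~$G$, so definable simplicity of~$G$ forces $k=1$. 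Hence $G$ is definably isomorphic to a single subgroup of $GL(n, R_1)$ and is therefore internal to~$R_1$.

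By~\cite[Theorem~4.1]{Peterzil1999} the additive group of the real closed field $R_1$ has dimension~$1$, so by Theorem~\ref{thm:dim1cohesive} it is cohesive. Since $G$ is internal to~$R_1$, Proposition~\ref{prop:int-cohesive}(1) transfers cohesiveness from $R_1$ to~$G$, which completes the argument. The only real content beyond results already invoked in Case~1 is the observation that definable simplicity collapses the Peterzil--Starchenko decomposition to a single factor; everything else is a direct quotation of earlier statements.
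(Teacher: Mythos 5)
Your argument is correct and is essentially the paper's own proof: the paper likewise invokes Case~1 of Theorem~\ref{thm:ominimal} (via \cite{Ramakrishnan2014} and \cite{Peterzil2000}) to realize $G$ inside $GL(n,R)$ for a definable real closed field $R$, then transfers cohesiveness from the $1$-dimensional $R$ via Theorem~\ref{thm:dim1cohesive} and Proposition~\ref{prop:int-cohesive}(1). Your separate abelian case is harmless but vacuous under the standard convention (as in \cite{Peterzil2000}) that definably simple groups are non-abelian.
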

\begin{pf}
	Let $G$ be definably simple. By the proof of Theorem \ref{thm:ominimal} there is a definable real closed field $R$ such that $G$ is definably isomorphic to a definable subgroup of $GL(n,R)$. Since $\dim(R) = 1$, by Theorem \ref{thm:dim1cohesive} $R$ is cohesive. But $GL(n,R)$ is internal to $R$, thus all its definable subsets are cohesive.
\end{pf}

We now proceed towards our second main result.

\begin{lem}\label{lem:orthdec}
	Let $G$ be an interpretable group. Then there are cohesive orthogonal definable sets $X_1, \dots, X_n$ and an injective map $h:G\to \prod_{i=1}^n X_i$.
\end{lem}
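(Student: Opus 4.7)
The plan is to start from \cite[Theorem 3]{Ramakrishnan2014}, which provides an injective definable map $f: G \to H_1 \times \cdots \times H_m$ into a product of finitely many $1$-dimensional definable groups. By \prettyref{thm:dim1cohesive}, each infinite $H_i$ is cohesive, while any finite $H_i$ is trivially cohesive, being orthogonal to every definable set. So the factors are already cohesive; the real task is to regroup them into pairwise orthogonal families.

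To this end, I would introduce the relation $i\sim j$ on $\{1,\ldots,m\}$ defined by declaring $H_i$ non-orthogonal to $H_j$. Reflexivity and symmetry are immediate, and transitivity is exactly the content of cohesiveness of $H_j$: if $H_i$ and $H_k$ are both non-orthogonal to the cohesive set $H_j$, then they are non-orthogonal to each other. Let $C_1,\ldots,C_n$ be the $\sim$-equivalence classes and set $X_k := \prod_{i\in C_k} H_i$. For distinct classes $C_k$ and $C_l$, every pair $H_i,H_j$ with $i\in C_k$ and $j\in C_l$ is orthogonal by the definition of $\sim$, so iterating \prettyref{cor:both} yields that $X_k$ is orthogonal to $X_l$. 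Thus $X_1,\ldots,X_n$ are pairwise orthogonal.

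It remains to verify that each $X_k$ is cohesive, which is obtained by iterating \prettyref{prop:prod-cohesive} across the factors in $C_k$. The inductive step requires that a partial product $H_{i_1}\times\cdots\times H_{i_p}$ (with all $i_r\in C_k$) is non-orthogonal to the next factor $H_{i_{p+1}}$ in the same class. This reduces to the auxiliary observation that if $A$ and $B$ are non-orthogonal, then so are $A\times C$ and $B$: a definable set $S\subseteq A^r\times B^s$ that is not a finite union of $(A^r,B^s)$-boxes lifts to $S\times C^r\subseteq (A\times C)^r\times B^s$, which is not a finite union of $((A\times C)^r,B^s)$-boxes, as one sees by fixing a point of $C^r$ and examining the corresponding fiber. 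Composing $f$ with the coordinate permutation that regroups the $H_i$'s according to $C_1,\ldots,C_n$ then gives the desired injective map $h: G\to X_1\times\cdots\times X_n$.

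The would-be obstacle — transitivity of non-orthogonality — is handled for free by cohesiveness of the $1$-dimensional building blocks, so the argument is essentially a matter of organising the output of \cite{Ramakrishnan2014} through an equivalence-class partition; everything beyond this is routine bookkeeping.
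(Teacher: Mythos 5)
Your proof is correct and follows essentially the same route as the paper: apply \cite[Theorem 3]{Ramakrishnan2014}, group the one-dimensional factors by the non-orthogonality relation (transitivity coming from cohesiveness via Theorem~\ref{thm:dim1cohesive}), and take the product over each equivalence class, invoking Corollary~\ref{cor:both} and Proposition~\ref{prop:prod-cohesive}. The only difference is that you spell out the auxiliary fact needed to iterate Proposition~\ref{prop:prod-cohesive} (that a partial product over a class stays non-orthogonal to the next factor), a detail the paper leaves implicit; your verification of it is correct.
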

\begin{pf}
By \cite[Theorem 3]{Ramakrishnan2014}, there is a definable injective map
$f:G \to \prod_{j=1}^k G_j$ where each  $G_j$ is a $1$-dimensional
definable group.  Define a relation $R$ on $\{1,\dots, k\}$ by $iRj$ if
$G_i$ and $G_j$ are not orthogonal.  By Theorem~\ref{thm:dim1cohesive}, the groups~$G_i$ are cohesive, hence $R$
is an equivalence relation.  Suppose there are $n$ equivalence classes.
Let $X_i$ be the product of the groups~$G_j$, with $j$ in the
$i$-th class.
Using $f$, it is easy to define (by a permutation of the coordinates on
the image) the injective map $h:G\to \prod_{i=1}^n X_i$.

The sets~$X_i$ are cohesive by Proposition~\ref{prop:prod-cohesive} and mutually
orthogonal by Corollary~\ref{cor:both}.
\end{pf}

\begin{thm}\label{thm:main2} If $G$ is a group interpretable in an
o-minimal structure $\CM$, then $G$ admits an orthogonal decomposition
$G=A_1 \dots A_n$.
\end{thm}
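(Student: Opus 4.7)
The plan is to combine the two main tools already at hand: \prettyref{lem:orthdec}, which provides an injective definable map from $G$ into a product of cohesive orthogonal definable sets, and \prettyref{thm:ominimal}, which produces a decomposition whenever a group is internal to a product of orthogonal sets. The cohesiveness and orthogonality of the resulting factors will then be transferred to the pieces of the decomposition via the structural propositions from Sections~\ref{sec:orth} and~\ref{sec:cohesive}.

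First, I would apply \prettyref{lem:orthdec} to produce cohesive orthogonal definable sets $X_1, \ldots, X_n$ and a definable injective map $h \colon G \to X_1 \times \cdots \times X_n$. Composing $h^{-1}$ (on its image) with any definable map sending the rest of $X_1\times\cdots\times X_n$ to a fixed element of $G$, we obtain a definable surjection $X_1 \times \cdots \times X_n \to G$, so $G$ is internal to $X_1 \times \cdots \times X_n$. Since interpretable groups in o-minimal structures are definably isomorphic to definable ones by \cite{Ramakrishnan2014}, we may transport the question to a definable group and apply \prettyref{thm:ominimal} to obtain definable subsets $A_1, \ldots, A_n$ with $G = A_1 \cdots A_n$ and $A_i$ internal to $X_i$ for each $i$.

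It remains to verify that the $A_i$ inherit the desired properties. Cohesiveness is immediate from \prettyref{prop:int-cohesive}(1): since $X_i$ is cohesive and $A_i$ is internal to $X_i$, $A_i$ is cohesive. For orthogonality, I would invoke \prettyref{prop:intort} together with \prettyref{cor:both} (and induction on $n$, as in the corollary proved right after \prettyref{cor:both}): the $X_i$'s are pairwise orthogonal, so the $A_i$'s, being respectively internal to them, are pairwise orthogonal, and hence orthogonal as a tuple.

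The argument is essentially an assembly of prior results, so I would not expect a genuine obstacle; the only point requiring mild care is the reduction from the interpretable setting to the definable one (so that \prettyref{thm:ominimal} applies directly) and, symmetrically, the translation back of the decomposition along the definable isomorphism, ensuring that internality of each $A_i$ to $X_i$ is preserved. Everything else is immediate from the machinery developed in Sections~\ref{sec:orth}--\ref{sec:cohesive} and the main theorem of Section~\ref{sec:omin2}.
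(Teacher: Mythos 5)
Your proposal is correct and follows exactly the paper's own argument: apply Lemma~\ref{lem:orthdec} to embed $G$ into a product of cohesive orthogonal sets $X_1,\ldots,X_n$, invoke Theorem~\ref{thm:ominimal} to get $G=A_1\cdots A_n$ with $A_i$ internal to $X_i$, and transfer cohesiveness and orthogonality to the $A_i$ via Propositions~\ref{prop:int-cohesive} and~\ref{prop:intort}. The extra care you take over internality and the interpretable-to-definable reduction is just a more explicit spelling-out of what the paper leaves implicit.
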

\begin{proof} Let $X_1, \dots, X_n$ be the sets provided by
Lemma~\ref{lem:orthdec}. By Theorem~\ref{thm:ominimal}, there are $X_i$-internal sets $A_i\sub G$, $i=1,\dots, n$, such that
$$G=A_1 \dots A_n.$$ Clearly, the $A_i$'s are orthogonal and cohesive, as they inherit those properties from the $X_i$'s.
\end{proof}

\begin{cor}
If $G$ is infinite, in Theorem \ref{thm:main2} we can choose each $A_i$ to be infinite. In this case, the number $n$ is an invariant of $G$ up to definable isomorphism.  Indeed, if $G=B_1\ldots B_m$ is another decomposition of $G$ as a product of orthogonal cohesive infinite definable subsets, then $m=n$ and each $B_i$ is bi-internal to a unique $A_j$.
\end{cor}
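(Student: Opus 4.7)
The claim splits into two essentially independent parts, which I would handle in turn.

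For the first part (every $A_i$ can be chosen infinite), I would use a simple merging procedure. If some $A_i$ in a decomposition $G = A_1 \ldots A_n$ from \prettyref{thm:main2} is finite, then $A_i A_{i+1}$ (or $A_{i-1} A_i$ if $i = n$) is a finite union of translates of its infinite neighbour, hence internal to the same cohesive set to which that neighbour is internal. By \prettyref{prop:int-cohesive}(1) it remains cohesive, and it is still orthogonal to the remaining factors. Replacing the pair by their product gives a decomposition of length $n-1$ with strictly fewer finite factors; iterating, if the process ever reached $n=1$ with the single factor finite, then $G$ itself would be finite, contradicting the hypothesis.

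For the invariance of $n$, assume $G = A_1 \ldots A_n = B_1 \ldots B_m$ are two decompositions by orthogonal cohesive infinite definable sets. The first step is to show that each $B_j$ is internal to some $A_i$. The iterated multiplication $\mu : A_1 \times \cdots \times A_n \to G$ is a definable surjection, so composing any definable surjection $A_1 \times \cdots \times A_n \to \mu^{-1}(B_j)$ with $\mu$ shows that $B_j$ is internal to $A_1 \times \cdots \times A_n$. Since the $A_i$ are pairwise orthogonal, \prettyref{cor:both} and induction on $n$ extend the two-factor version of indecomposability to this product; as $B_j$ is cohesive, hence indecomposable by \prettyref{prop:int-cohesive}(2), it is internal to some $A_{\phi(j)}$.

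For the uniqueness of $\phi(j)$ I would use the lemma that whenever $C$ and $D$ are infinite definable sets and $D$ is internal to $C$, then $C$ and $D$ are non-orthogonal: a definable surjection $C^k \to D$ whose graph decomposes as a finite union of $(C^k,D)$-boxes would force the image to be finite. Therefore, if $B_j$ were internal to two distinct factors $A_i$ and $A_{i'}$, cohesiveness of $B_j$ would force $A_i$ and $A_{i'}$ to be non-orthogonal, contradicting the decomposition.

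Symmetrically one obtains a map $\psi : \{1, \ldots, n\} \to \{1, \ldots, m\}$ with $A_i$ internal to $B_{\psi(i)}$. Transitivity of internality places $B_j$ inside $B_{\psi(\phi(j))}$; the non-orthogonality lemma applied on the $B$-side then forces $\psi(\phi(j)) = j$, and symmetrically $\phi(\psi(i)) = i$. Hence $\phi$ and $\psi$ are mutually inverse bijections, so $n = m$ and each $B_j$ is bi-internal to $A_{\phi(j)}$. The main obstacle I expect is the non-orthogonality lemma above; it is short but essential, and once it is available everything else reduces to formal manipulation of cohesiveness, orthogonality, and \prettyref{prop:int-cohesive}.
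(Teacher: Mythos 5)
Your proof is correct and follows essentially the same route as the paper's: absorb finite factors into an infinite one by taking finite unions of translates, use indecomposability (Proposition~\ref{prop:int-cohesive}(2) together with Corollary~\ref{cor:both} and induction) to place each $B_j$ inside some $A_{\phi(j)}$, and use cohesiveness plus the observation that two infinite sets, one internal to the other, are non-orthogonal to get uniqueness. The only difference is one of detail: the paper compresses the final step ($m=n$ and bi-internality) into ``the argument also shows'', which you correctly expand via the mutually inverse maps $\phi$ and $\psi$.
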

\begin{pf}  Suppose that $G$ is infinite and fix an orthogonal decomposition $G = A_1\ldots A_n$. Then at least one $A_i$ is infinite, say $A_1$. If some $A_i$ is finite we may replace $A_1$ with $A_1A_i$ and omit $A_i$ obtaining another valid orthogonal decomposition. So we may assume that $A_1, \ldots, A_n$ are all infinite. Now consider another decomposition $G = B_1\ldots B_m$ into infinite orthogonal cohesive sets. Fix $B_i$ and observe
that $B_i$ is internal to $G$, which is internal to the cartesian product
$A_1\times \ldots \times A_n$. Since $B_i$ is indecomposable, it must be
internal to some $A_j$. Moreover, $j$ must be unique, because if $B_i$ is internal to both $A_j$ and
$A_h$, with $j\ne h$, then it is non-orthogonal to both, so by cohesiveness of $B_i$,
$A_i$ and $A_h$ are non-orthogonal, a contradiction. The argument also shows that $m=n$ and $B_i$ is in fact bi-internal to the corresponding $A_j$.
\end{pf}

\section{Locally definable groups}
In this section, we fix again an o-minimal structure \cal M, and prove Theorem \ref{thm:lattice2}. Let us first recall a few definitions concerning locally definable sets and groups (which can, in fact, be given for  arbitrary structures).

\begin{defn}
	A {\bf locally definable set} $X$ is a countable union of definable sets together with a given presentation as such a countable union. A subset of a locally definable set $X$ is said to be {\bf definable} if it is definable in $\CM$ and is contained in the union of finitely many sets of the presentation of $X$ (this last condition is automatically satisfied if $\CM$ is $\aleph_0$-saturated). A {\bf compatible} subset of a locally definable set $X$ is a subset which intersects every definable subset of $X$ at a definable set. A compatible subset is {\bf discrete} if it intersects every definable set into a finite set. A {\bf locally definable function} is a function between locally definable sets whose restriction to each definable set is definable. Similar definitions apply to groups. A locally definable group is a locally definable set with a locally definable group operation. We can then speak of compatible and discrete subgroups and locally definable homomorphisms. A locally definable group is {\bf definably generated} if it is generated by a definable subset. 	
\end{defn}

\begin{thm} \label{thm:lattice2}
	Let $G$ be an abelian group definable in the disjoint union $\bigsqcup_i {\mathcal X}_i$ of finitely many o-minimal structures ${\mathcal X}_1, \ldots, {\mathcal X}_n$. Then there is a locally definable isomorphism $$G \cong G_1\times \ldots \times G_n /\Gamma,$$ where $G_i$ is a locally definable and definably generated group in ${\mathcal X}_i$, and $\Gamma$ is a compatible locally definable discrete subgroup of $G_1\times \ldots \times G_n$.
\end{thm}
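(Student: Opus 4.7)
The plan is to follow the outline given in the introduction. First, concatenate the structures $\mathcal{X}_1,\dots,\mathcal{X}_n$, separated by single points, to form a single o-minimal structure $\CM$ in which the domains $X_1,\dots,X_n$ become orthogonal. Every set definable in $\bigsqcup_i\mathcal{X}_i$ is then definable in $\CM$, so $G$ is a definable abelian group in $\CM$, internal to $X_1\times\dots\times X_n$. Theorem~\ref{thm:ominimal} yields definable sets $A_i\subseteq G$ with $A_i$ internal to $X_i$ and $G=A_1+\dots+A_n$.

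Since $G$ is abelian, $H_i:=\langle A_i\rangle\leq G$ is a locally definable abelian subgroup, presented as the countable union of the $k$-fold sumsets of $A_i\cup(-A_i)$; every definable subset of $H_i$ is $X_i$-internal. The sum homomorphism $\phi\colon H_1\times\dots\times H_n\to G$ is locally definable and surjective, since $G=A_1+\dots+A_n$. Set $\Gamma:=\ker\phi$; being the kernel of a locally definable homomorphism, it is automatically a compatible locally definable subgroup.

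The central technical step is to show that $\Gamma$ is \emph{discrete}. By Proposition~\ref{prop:intort}, definable $X_i$-internal subsets of the $H_j$'s are pairwise orthogonal, so every definable subset of $H_1\times\dots\times H_n$ is a finite union of boxes $D_1\times\dots\times D_n$. I prove by induction on $n$ that $\Gamma\cap(D_1\times\dots\times D_n)$ is finite. Projecting to the first coordinate, the image is $D_1\cap\bigl(-(D_2+\dots+D_n)\bigr)$, which is simultaneously $X_1$-internal and $(X_2\times\dots\times X_n)$-internal; by Proposition~\ref{prop:intort} such a set is orthogonal to itself, forcing its diagonal (being a finite union of boxes contained in the diagonal) to consist of finitely many singletons, so the set is finite. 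For each $h_1$ in this finite image the fibre is a translate of $\ker(\phi')\cap\bigl((D_2-h_2^*)\times\dots\times(D_n-h_n^*)\bigr)$, where $\phi'$ is the sum on $H_2\times\dots\times H_n$ and $(h_2^*,\dots,h_n^*)$ is any chosen pre-image; this is finite by the inductive hypothesis.

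It remains to realise each $H_i$ as a locally definable group $G_i$ inside $\mathcal{X}_i$ alone. Fix a definable surjection $f_i\colon X_i^{m_i}\to A_i$ witnessing internality. For each $r\in\N$ the equivalence relation on $(\{\pm 1\}\times X_i^{m_i})^r$ that identifies tuples whose signed-$f_i$-sums in $G$ coincide is a $\CM$-definable subset of a cartesian power of $X_i$, hence, by orthogonality of the $X_j$'s, definable in $\mathcal{X}_i$; the same holds for the pull-back of the group operation. Assembling these quotients as $r$ varies gives a definably generated locally definable group $G_i$ in $\mathcal{X}_i$ together with an isomorphism $G_i\cong H_i$. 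Transporting $\Gamma$ across $H_1\times\dots\times H_n\cong G_1\times\dots\times G_n$ yields the required locally definable isomorphism $G\cong G_1\times\dots\times G_n/\Gamma$ with $\Gamma$ compatible and discrete. The main obstacle is the discreteness of $\Gamma$: the $H_i$'s may intersect non-trivially in $G$ (as in \cite[Example~1.2]{Berarducci2013c}), yet orthogonality of the $X_i$'s forces every definable slice of $\Gamma$ to be finite.
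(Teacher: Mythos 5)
Your overall strategy (concatenating the structures, invoking Theorem~\ref{thm:ominimal} to get $G=A_1+\dots+A_n$, forming the sum homomorphism $\phi$ on $\langle A_1\rangle\times\dots\times\langle A_n\rangle$, and using orthogonality to prove discreteness of $\Gamma=\ker\phi$) is the paper's strategy, and your discreteness argument is correct: it is a repackaging, by induction, of the paper's direct observation that any box $B_1\times\dots\times B_n$ contained in the kernel must consist of singletons, since any $n-1$ of the coordinates determine the remaining one via $a_1+\dots+a_n=1_G$.

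The gap is in your final step, where you realise $H_i=\langle A_i\rangle$ as a locally definable group inside $\mathcal{X}_i$. You quotient $(\{\pm 1\}\times X_i^{m_i})^r$ by the equivalence relation ``same signed $f_i$-sum'' and then ``assemble these quotients''. A quotient of a definable set by a definable equivalence relation is an \emph{interpretable} set, not a definable one, so what your construction produces is a locally interpretable group; nothing in the proposal converts it into a locally definable group living in powers of $X_i$, as the statement requires. A general o-minimal structure need not eliminate imaginaries or admit definable choice, so you cannot simply pick definable transversals for your relations $E_r$; and the theorem of \cite{Ramakrishnan2014} that interpretable groups are definable applies to definable (finite-dimensional) groups, not to the increasing union of quotient \emph{sets} you are assembling, so it does not obviously repair the construction either. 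The paper avoids quotients altogether: each $A_i^{(m)}$ is $X_i$-internal and contained in $G\subseteq X_1^{k_1}\times\dots\times X_n^{k_n}$, so by Proposition~\ref{prop:finite-proj} its projection onto the coordinates outside $X_i$ is finite; hence $\langle A_i\rangle\subseteq L\times X_i^{k_i}\times F$ with $L$ and $F$ countable, and re-coding $L\cup F$ by a countable subset of $X_i$ gives a locally definable bijection of $\langle A_i\rangle$ with a locally definable subset of $X_i^{k_i+1}$, along which one transports the group operation. Replacing your quotient construction by this direct embedding closes the gap; the rest of your argument then goes through.
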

\begin{pf} The structure $\bigsqcup_i {\mathcal X}_i$ is bi-interpretable with the o-minimal structure $\CM$ obtained by concatenating ${\mathcal X}_1, \ldots, {\mathcal X}_n$ in the given order and adding $n-1$ points to separate ${\mathcal X}_i$ from ${\mathcal X}_{i+1}$ for $i<n$. We can therefore apply to $\CM = \bigsqcup_i {\mathcal X}_i$ the various results concerning o-minimal structures.
By Theorem \ref{thm:ominimal}, the group $G$ admits a decomposition $G = A_1\ldots A_n$ with respect to $X_1,\ldots, X_n$, where $X_i$ is the domain of ${\mathcal X}_i$.
Let $\langle A_i \rangle$ be the locally definable subgroup of $G$ generated by $A_i$.

We claim that $\langle A_i \rangle$ is locally definably isomorphic to a definably generated group $G_i$ in the structure ${\mathcal X}_i$. To this aim, let $A_i^{(n)} \subseteq G$ consist of the $n$-fold products $a_1\ldots a_n$, where each $a_i$ is either an element of $A_i$ or is the group-inverse of an element of $A_i$. Without loss of generality, after permuting coordinates, we can choose $k_1,\ldots, k_n\in \N$ such that $G$ is included in $X_1^{k_1}\times \ldots \times X_n^{k_n}$.
	Since $A_i^{(n)}$ is $X_i$-internal and included in $G$, it must have a finite projection on the factors different from $X_i$. Thus we can write $A_i^{(n)} \subseteq L_n \times X_i^{k_i} \times F_n$ where $L_n$ and $F_n$ are finite sets. It follows that the subgroup $\langle A_i \rangle$ of $G$ generated by $A_i$ is included in $L\times X_i^{k_i}\times F$ where $L = \bigcup_n L_n$ and $F= \bigcup_n F_n$ are countable sets. Consider a bijection sending $L\cup F$ to a countable subset of $X_i$. This induces a locally definable bijection between $\langle A_i \rangle$ and a locally definable subset $G_i \subseteq X_i^{k_i+1}$. We can endow $G_i$ with a group operation via the bijection. The resulting group $G_i$ will then be locally definable, and in fact definably generated, in the structure $\cal X_i$. There is a locally definable group homomorphism $f: G_1\times \ldots \times G_n \to G$ induced by the composition $G_1 \times \ldots \times G_n \cong \langle A_1 \rangle \times \ldots \times \langle A_n \rangle \to G$, where the last map sends $(x_1,\ldots, x_n)$ to their product in $G$. Note that $f$ is a homomorphism since $G$ is abelian.

It remains to prove that the kernel $\Gamma$ of the above $f$ is  discrete. To this aim fix, for $i=1,\ldots, n$, a definable subset $U_i$ of $\langle A_i \rangle$ and let $S$ be the set of all tuples $(a_1,\ldots, a_n)\in U_1\times \ldots \times U_n$ such that $a_1 \ldots a_n = 1_G$. It suffices to show that $S$ is finite.  The sets $U_1,\ldots, U_n$ are orthogonal by Proposition \ref{prop:intort}, since they are internal to $A_1,\ldots, A_n$ respectively. It follows that $S$ is a finite union of sets of the form $B_1\times \dots \times B_n$ with $B_i\subseteq U_i$. However, each $B_i$ can only be a singleton because any choice of $n-1$ elements from $a_1,\ldots, a_n$ determines the last one via the equation $a_1\ldots a_n = 1_G$.
\end{pf}

\section{Questions}\label{sec:questions}
	\begin{enumerate}
		\item Does the conclusion of Theorem \ref{thm:ominimal} extend to the case when $\CM$ is an arbitrary structure?
		\item Given a structure $\CM$ and a definably simple group $G$ in $\CM$, is $G$ always cohesive? (The answer is positive if $\CM$ is o-minimal, by Proposition \ref{prop:definably-simple} One could then consider the case when $\CM$ is merely assumed to be NIP.)
		\item Can the abelianity hypothesis in Theorem~\ref{thm:lattice2} be removed?
		\item Are groups of dimension $1$ in a geometric theory always cohesive? (The answer is positive in the o-minimal context, by Theorem \ref{thm:dim1cohesive}.)
		%\item Let $X$ and $Y$ be cohesive and non-orthogonal definable sets in a structure $\CM$. Is $X\times Y$ always cohesive? (The answer is positive if $X$ and $Y$ are one-dimensional definable groups in an o-minimal structure.)
	%	\item Are indecomposable sets always cohesive?
\item Is a set internal to an indecomposable set also indecomposable?
 \item Does Proposition \ref{prop:XY-box} hold without the saturation hypothesis?
 \item Is it true that if a definable group $G$ admits a decomposition with regard to orthogonal definable sets $X_1, \dots, X_n$, then so does every definable subgroup of $G$?
	\end{enumerate}

\section{Appendix}
We construct an example of a definable group whose group
operation splits, but the group is not a product of two infinite
groups, hence in particular two orthogonal groups.
First we need the following observation.

\begin{exa}\label{exa:heise}
	There is a real Lie group $G$, definable in the pure real field
structure, which is not a semidirect product of the connected component of the identity $G^0$ and a finite non-trivial group.
\end{exa}
\begin{pf}
	Let $p$ be an odd prime. The Heisenberg group mod $p$ is the semialgebraic group $H$ of matrices of the form
	$\begin{bmatrix}
	1 & a & c \\
	0 & 1 & b \\
	0 & 0 & 1
	\end{bmatrix}$ where $c \in \R/p\Z$ and $a,b\in \Z/p\Z$. We claim
that $H$ is not Lie isomorphic to a semidirect product of a connected real
Lie group and a discrete group. Taking $a,b=0$ we obtain the center $Z(H)$
of $H$, which coincides with $H^0$ and it is isomorphic to the circle
group $\R/p\Z$. The quotient $H/H^0$ is isomorphic to $(\Z/p\Z)^2$.
	Since $H$ is not abelian, $H$ is not isomorphic to the direct
product  $H^0 \times (\Z/p\Z)^2$. Moreover the direct product is the only
possible semidirect product in the Lie category because $(\Z/p\Z)^2$ has
no non-trivial continuous action on $\R/p\Z$ (since the only non-trivial
definable automorphism of $\R/p\Z$ is the inverse, and $p$ is odd).
\end{pf}

\begin{exa}\label{exa:DH}
	Let $R_1$ and $R_2$ be two orthogonal copies of the field $\R$ and
work in the o-minimal structure $M = R_1\sqcup R_2$ obtained by
concatenation of $R_1$ and $R_2$ with a separating element between them.
Let $H_i$ be the Heisenberg group mod~$3$ over~$R_i$.
Consider the definable
group $H_1\times H_2$. Now consider the definable subgroup $G < H_1 \times
H_2$ consising of the pairs of matrices
	$$\left\langle \begin{bmatrix}
	1 & a & c \\
	0 & 1 & b \\
	0 & 0 & 1
	\end{bmatrix}, \begin{bmatrix}
	1 & a & c' \\
	0 & 1 & b \\
	0 & 0 & 1
	\end{bmatrix} \right\rangle$$
	with $a,b\in \Z/3\Z, c,c' \in \R/3\Z$. Note that $G$ is definably
isomophic to an extension of $(\R/3\Z)^2$ by $(\Z/3\Z)^2$. In Proposition \ref{prop:DH} below we prove that
$G$ is not a direct product of two infinite definable subgroups. This
shows that, although the group operation splits with respect to~$R_1$
and~$R_2$ (because it is induced by
the direct product $H_1\times H_2$), $G$ is not a direct product of
orthogonal subgroups.
\end{exa}

\begin{prop}\label{prop:DH}
	The group $G$ in Example \ref{exa:DH} is not a direct product of two infinite definable subgroups.
\end{prop}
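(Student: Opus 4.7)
The plan is to extract, via explicit Heisenberg computations, enough structural data about $G$ to rule out any non-trivial direct product decomposition. First I will parametrise $G$ by quadruples $(a,b,c,c')$ with $a,b\in\Z/3\Z$, $c\in R_1/3\Z$, $c'\in R_2/3\Z$, and record the componentwise Heisenberg product
$$(a_1,b_1,c_1,c_1')\cdot(a_2,b_2,c_2,c_2')=(a_1+a_2,\,b_1+b_2,\,c_1+c_2+a_1b_2,\,c_1'+c_2'+a_1b_2).$$
A direct calculation then gives $[g_1,g_2]=(0,0,a_1b_2-a_2b_1,a_1b_2-a_2b_1)$, so the derived subgroup $[G,G]=\{(0,0,d,d):d\in\Z/3\Z\}$ is a copy of $\Z/3\Z$ embedded \emph{diagonally} in $G^0$. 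Simultaneously $Z(G)=G^0=R_1/3\Z\times R_2/3\Z$ has dimension $2$, and $G/G^0\cong(\Z/3\Z)^2$; in particular $G$ is non-abelian.

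Next I will assume for contradiction that $G=A\times B$ with $A,B$ infinite definable subgroups. Then $G^0=A^0\times B^0$, and both $A^0,B^0$ are positive-dimensional (since $A,B$ are infinite), so $\dim A^0=\dim B^0=1$. The main obstacle is the following geometric step, which I would carry out using orthogonality of $R_1$ and $R_2$: the only $1$-dimensional connected definable subgroups of $R_1/3\Z\times R_2/3\Z$ are the coordinate circles $R_1/3\Z\times\{0\}$ and $\{0\}\times R_2/3\Z$. The argument is: any definable subset of $R_1/3\Z\times R_2/3\Z$ is a finite union of boxes, and the projections of a $1$-dimensional connected subgroup $H$ are connected definable subgroups of the circles, hence either trivial or onto; if both projections were onto, then $H$ would have to contain both a $1$-dimensional ``horizontal'' box ($U$ infinite, $V$ finite) and a $1$-dimensional ``vertical'' box ($U$ finite, $V$ infinite), and being a subgroup $H$ would also contain their componentwise sum, a subset of dimension $2$, contradicting $\dim H=1$. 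Hence, up to swapping, $A^0=R_1/3\Z\times\{0\}$ and $B^0=\{0\}\times R_2/3\Z$.

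Finally I will close the argument using commutators. From $G=A\times B$ we have $[A,B]=1$ and $[G,G]=[A,A]\cdot[B,B]$, with $[A,A]\cap[B,B]\subseteq A\cap B=\{e\}$. Since $[G,G]\cong\Z/3\Z$ is simple and $G$ is non-abelian, exactly one of $[A,A]$, $[B,B]$ equals $[G,G]$; say $[A,A]=[G,G]\subseteq A$, so $(0,0,1,1)\in A$. But inside $G^0=A^0\times B^0$, the unique $A^0\times B^0$-factorisation of $(0,0,1,1)$ is $(0,0,1,0)\cdot(0,0,0,1)$, whose $B^0$-component is non-trivial. By uniqueness of the direct-product decomposition of $G$ this forces $(0,0,1,1)\notin A$, the desired contradiction.
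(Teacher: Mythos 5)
Your proof is correct, but it takes a genuinely different route from the paper's. Both arguments start the same way: assuming $G=A\times B$ with $A,B$ infinite, one identifies $G^0=A^0\times B^0$ with the two coordinate circles $R_1/3\Z\times\{0\}$ and $\{0\}\times R_2/3\Z$ (you do this directly from the box decomposition supplied by orthogonality; the paper does it via internality of the factors to $R_1$, $R_2$ and the fibres of the projections onto the Heisenberg groups $H_i$). From there the paper runs a case analysis on the indices $[A:A^0]$ and $[B:B^0]$, whose product is $9$: the cases $(1,9)$ and $(9,1)$ are excluded by appealing to Example~\ref{exa:heise} (the real Heisenberg group mod an odd prime does not split over its identity component), and the case $(3,3)$ by showing that a central extension of $\R/3\Z$ by $\Z/3\Z$ is abelian. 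You bypass all of this with a single commutator computation: $[G,G]$ is the \emph{diagonal} copy of $\Z/3\Z$ inside $G^0$, it must be contained in one of the two direct factors, and yet its generator $(0,0,1,1)$ factors in $A^0\times B^0$ with both components nontrivial; concretely, $(0,0,0,1)=(0,0,1,0)^{-1}(0,0,1,1)$ would lie in $A\cap B$ while being distinct from the identity. Your argument is shorter and entirely self-contained (no Lie-theoretic input from Example~\ref{exa:heise}); the paper's case analysis, in exchange, gives finer information about what any putative splitting would have to look like. Both are valid proofs.
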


\begin{pf} Assume that $G$ is the direct product of two definable infinite
subgroups $G_1$ and $G_2$. Then $\dim(G_1) = \dim(G_2) = 1$. We may assume
that $G_1$ is $R_1$-internal and $G_2$ is $R_2$-internal.
	Consider the natural (surjective) projections $\pi_i: G  \to H_i$.
	
	We claim that $G_1^0=\pi_2^{-1}({\e})$ and $G_2^0 = \pi_1^{-1}({\e})$
where $G_i^0$ is the connected component of the identity of $G_i$.
Consider for instance $G_1$. Clearly $\pi_2(G_1)$ is finite by
orthogonality. Hence $\pi_2^{-1}({\e}) \cap G_1$ has finite index in $G_1$,
so it is infinite. Moreover $\pi_2^{-1}({\e})$ is $H_1^0\times\{{\e}\}$, hence
it is connected and, since two definably connected one dimensional groups
having infinite intersection coincide, it must coincide with $G^0_1$. The claim
is thus proved.
	
	Observe that $Z(G) = G^0 = G^0_1 \times G^0_2$,
thus $[G_1:G^0_1][G_2:G^0_2]= [G:G^0] =
9$. So, there are three cases for the possible values of the indexes of
$G^0_1$ and $G^0_2$: $(1,9), (3,3), (9,1)$.

\medskip
First case: $[G_1:G_1^0]=1$ and $[G_2:G_2^0]=9$ (observe that the third
case is symmetric). In this case $G_1$ is connected, thus
$G_1=G_1^0$,
and $|\pi_1(G_2)| = 9$ because $G^0_2 = \pi^{-1}(\e)$ has index $9$ in $G_2$.
On the other hand
% $$\pi_1(G) = \pi_1(G_1) \pi_1(G_2),$$ that is
$$H_1 = \pi_1(G)  = \pi_1(G_1)\pi_1(G_2) = H_1^0\pi_1(G_2) = Z(H_1)\pi_1(G_2)$$
and since $\pi_1(G_2)$ has $9$ elements and $Z(H_1)$ has index $9$ in $H_1$, it follows that $H_1$ must be the direct product of $Z(H_1)$ and $\pi_1(G_2)$,
contradicting the claim in \prettyref{exa:heise}.

\medskip
Second case: $[G_1:G_1^0]=3 = [G_2:G^0_2]$. In this case we show that $G_1$ and $G_2$ are abelian and we reach a contradiction since $G$ is not abelian. By symmetry it suffices to show that $G_1$ is abelian. First recall that $G_1^0<Z(G)$, so in particular $G_1^0$ is central in $G_1$, and definably isomorphic to $\R/3\Z$. It follows that $G_1$ is definably isomorphic to a central  extension of $\R/3\Z$ by $\Z/3\Z$. We claim that such a group is necessarily abelian. To this aim we show that there is a copy of $\Z/3\Z$ which is a complement of $G_1^0$.  Let $G_1^0, aG_1^0, bG_1^0$ be the three connected components of $G_1$. Note that the map $x\mapsto x^3$ has image contained in $G_1^0$ and its restriction to $G_1^0$ is onto. Consider the map sending $x\in G_1^0$ to $(ax)^3 \in G_1^0$. Since $G^0_1$ is central, $(ax)^3 = a^3 x^3$. Now let $y\in G^0_1$ be such that $y^3 = a^3$. Then $ay^{-1}$ has order three and generates a complement $C$ of $G^0_1$ in $G_1$.
Since $G^0_1$ is central we conclude that $G_1$ is the direct product of $C$ and $G^0_1$, hence it is abelian.
%We have thus proved the claim and we can conclude that $G_1$ is a definably isomorphic to a semidirect product of $\R/3\Z$ by $\Z/3\Z$. Finally observe that $\Z/3\Z$ does not admit continuous non trivial actions on $\R/3\Z$, so the semidirect product must be direct and $G_1$ is abelian.
\end{pf}

\subsection*{Acknowledgements} We thank Rosario Mennuni and an anonymous referee for many suggestions that helped improve the paper.  

%Competing interests: The authors declare none
%\bibliographystyle{plainnat}

\end{document}